\documentclass[final, reqno]{amsart}

\title{Multiplicative Ehresmann connections for Lie groupoid fibrations}

\author{Matthijs Lau}
\thanks{Matthijs Lau: 
DipMat, Universit\`a degli Studi di Salerno,
via Giovanni Paolo II n$^\circ$ 123, 84084 Fisciano (SA), Italy, \texttt{mlau@unisa.it}}

\author{Ioan M\u{a}rcu\cb{t}}
\thanks{Ioan M\u{a}rcu\cb{t}: Mathematisches Institut, Universit\"{a}t zu K\"{o}ln,
Weyertal 86-90,
D-50931 K\"{o}ln, Germany}

\dedicatory{Dedicated to Rui Loja Fernandes on his 60th birthday}

\usepackage{graphicx}
\usepackage{comment}
\usepackage{combelow}
\usepackage{amssymb, amsmath}
\usepackage[alphabetic]{amsrefs}
\usepackage{mathrsfs}
\usepackage{enumerate}

\usepackage{graphicx, float}
\usepackage{tikz-cd}
\usepackage{tikz}
\usetikzlibrary{positioning, arrows, arrows.meta, bending, calc}

\usepackage{hyperref}

\newtheorem{theorem}{Theorem}[section]
\newtheorem{proposition}[theorem]{Proposition}
\newtheorem{corollary}[theorem]{Corollary}
\newtheorem{lemma}[theorem]{Lemma}
\newtheorem{question}[theorem]{Question}
\theoremstyle{definition}
\newtheorem{definitionx}[theorem]{Definition}

\theoremstyle{remark}
\newtheorem*{remarkx}{Remark}
\newtheorem{examplex}[theorem]{Example}
\newtheorem{examplesx}[theorem]{Examples}

\newcommand{\xqed}[1]{%
\leavevmode\unskip\penalty9999 \hbox{}\nobreak\hfill \quad\hbox{#1}}
\newcommand{\triangleqed}{\xqed{$\vartriangle$}}
\newcommand{\downtriangleqed}{\xqed{$\triangledown$}}

\newenvironment{definition}{\begin{definitionx}}{\downtriangleqed\end{definitionx}}

\newenvironment{remark}{\begin{remarkx}}{\triangleqed\end{remarkx}}
\newenvironment{example}{\begin{examplex}}{\triangleqed\end{examplex}}
\newenvironment{examples}{\begin{examplesx}}{\triangleqed\end{examplesx}}

\newcounter{char}
\loop\ifnum\value{char}<27
\expandafter\edef\csname gr\alph{char}\endcsname{\noexpand\boldsymbol{\alph{char}}} 
\expandafter\edef\csname gr\Alph{char}\endcsname{\noexpand\mathcal{\Alph{char}}} 
\expandafter\edef\csname bb\Alph{char}\endcsname{\noexpand\mathbb{\Alph{char}}} 
\expandafter\edef\csname scr\Alph{char}\endcsname{\noexpand\mathscr{\Alph{char}}} 
\expandafter\edef\csname scr\alph{char}\endcsname{\noexpand\mathscr{\alph{char}}} 
\addtocounter{char}{1}
\repeat

\def\mydeff#1{\expandafter\def\csname frk#1\endcsname{\mathfrak{#1}}}
\def\mydefallf#1{\ifx#1\mydefallf\else\mydeff#1\expandafter\mydefallf\fi}
\mydefallf abcdefghijklmnopqrstuvwxyz\mydefallf

\newcommand{\VB}{\ensuremath{\grV\grB}}
\newcommand{\LA}{\ensuremath{\grL\grA}}

\newcommand{\diffto}{\xrightarrow{\raisebox{-0.2 em}[0pt][0pt]{\smash{\ensuremath{\sim}}}}}

\newcommand{\tto}{\rightrightarrows}

\newcommand{\acts}[1][]{\ {\rotatebox[origin = c]{-90}{$\circlearrowright$}}_{#1}\ }

\newcommand{\fp}[2]{{}_{#1}\times_{#2}}

\newcommand{\sslash}{\mathbin{\mkern-2mu/\mkern-6mu/\mkern-2mu}}

\DeclareMathOperator{\Hor}{Hor}
\DeclareMathOperator{\hor}{hor}

\renewcommand{\tilde}{\widetilde}

\date{}
\usepackage{color}
\definecolor{tocolor}{rgb}{.1,.1,.5}
\definecolor{urlcolor}{rgb}{.2,.2,.6}
\definecolor{linkcolor}{rgb}{.1,.1,.6}
\definecolor{citecolor}{rgb}{.6,.2,.1}
\hypersetup{colorlinks = true, urlcolor = urlcolor, linkcolor = linkcolor, citecolor = citecolor}
\definecolor{darkgreen}{rgb}{0.0, 0.5, 0.0}

\begingroup
\hypersetup{linkcolor = blue}
\endgroup

\begin{document}
\maketitle
\begin{abstract}
	We introduce multiplicative Ehresmann connections on surjective submersions of Lie groupoids, extending both the classical notion of Ehresmann connections on fibre bundles and the more recent notion of multiplicative connections on Lie groupoid extensions. We investigate the existence of such connections, showing that, in general, they may fail to exist even for proper Lie groupoids. In contrast, positive results hold for Morita submersions, uniform Lie groupoid fibrations, locally trivial families of Lie groupoids, and proper families of Lie groupoids. Our main results concern completeness. For Lie groupoid fibrations, we prove that the completeness of a multiplicative connection is governed by the induced connection on the kernel bundle and, under connectivity assumptions, by the base connection. For families of source-proper Lie groupoids, we prove the equivalence between local triviality and the existence of complete multiplicative Ehresmann connections.
\end{abstract}

\setcounter{tocdepth}{1}
\tableofcontents

\section{Introduction}
In many areas of differential geometry, fibrations play a fundamental role---most notably vector bundles, principal bundles, covering spaces, Riemannian submersions, and symplectic fibrations. These classes of fibrations come with corresponding notions of connections, defined by certain compatibility conditions with the internal geometric or algebraic structure of the fibration. In \cite{Laurent-Gengoux2009}, the study of Lie groupoid extensions, i.e., surjective submersions of Lie groupoids covering the identity, and their corresponding notion of \emph{multiplicative} Ehresmann connection was initiated. The authors introduced these connections in order to describe non-abelian differentiable gerbes over stacks and studied, among other things, the problems of existence and completeness. With the aim of producing local models in Poisson geometry \cite{FerMa26}, the study of multiplicative Ehresmann connections was taken up again in \cite{Fernandes2023}, where a strong existence result was obtained for proper groupoids and where the infinitesimal theory was developed. The cohomological obstruction theory at both the groupoid and algebroid levels was further developed in \cite{Grad2025}. Multiplicative Ehresmann connections have found recent applications in the study of Yang–Mills theory on Lie groupoids \cite{GradPhD} and in the construction of a Cartan model for the differentiable stack cohomology of Lie groupoids \cite{KTPhD}.

The purpose of the present paper is to extend the theory of multiplicative Ehresmann connections beyond extensions to Lie groupoid morphisms
\[
	\pi\colon(\grG\tto M)\to (\grH\tto N)
\]
which are surjective submersions, but whose base map $\pi_0\colon{M}\to N$ is not necessarily the identity. Also in this setting, multiplicative Ehresmann connections are defined by requiring their horizontal bundle to be a \VB-subgroupoid of $T\grG\tto TM$
\[
	\Hor\tto \Hor_0,
\]
where $\Hor_0$ is an induced connection for the base map $\pi_0$. These objects simultaneously generalise classical Ehresmann connections on surjective submersions, linear connections on vector bundles, multiplicative connections on Lie groupoid extensions, and, in particular, principal connections, via the gauge groupoid.

Section \ref{sec:mEc} is devoted to the basic properties of multiplicative Ehresmann connections. We show that they are characterized by the property that the horizontal paths form a subgroupoid of the path groupoid of $\grG$ (Proposition \ref{prp: cond MC}). Finally, we briefly review the infinitesimal notions of multiplicative connections.

In Section \ref{sec:existence} and at the beginning of Section \ref{sec:loc_triv_fam}, we turn to the existence problem for multiplicative Ehresmann connections. For extensions, properness of the total groupoid suffices to ensure existence \cite{Fernandes2023}. The general case of Lie groupoid fibrations is much more restrictive. For example, the action morphism $\pi\colon H\ltimes M\to H$ corresponding to a non-trivial action of a connected group $H\acts M$ never admits a multiplicative connection (Corollary \ref{cor:action}). We obtain positive results for the following classes of groupoid morphisms (reviewed in Section \ref{ss:classes of maps}).

\begin{theorem}
	Multiplicative Ehresmann connections always exist for:
	\begin{enumerate}
		\item Morita fibrations (Proposition \ref{prp:Morita_fib}),

		\item proper uniform Lie groupoid fibrations (Corollary \ref{cor: proper uniform Lgrpd fib}),

		\item locally trivial families of Lie groupoids (Proposition \ref{prp: LC imply MC}), and

		\item proper families of Lie groupoids (Theorem \ref{thm:prop_fam}).
	\end{enumerate}
\end{theorem}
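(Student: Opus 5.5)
This is an umbrella theorem: I would prove each of the four items separately. The common framework is the definition of a multiplicative Ehresmann connection as a \VB-subgroupoid $\Hor\tto\Hor_0$ of $T\grG\tto TM$ complementary to $\ker d\pi$. Equivalently, by Proposition \ref{prp: cond MC}, the horizontal paths must form a subgroupoid of the path groupoid. So in each case the task is to produce a wide \VB-subgroupoid complementary to the kernel, either by an explicit construction or by averaging.

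For (1), a Morita fibration $\pi\colon\grG\to\grH$, I would use that $\grG\cong\pi_0^*\grH = M\fp{\pi_0}{s}\grH\fp{t}{\pi_0}M$ is the pullback groupoid. I would pick any Ehresmann connection $\Hor_0$ on $\pi_0\colon M\to N$. Then I would define $\Hor$ at $(x,h,y)$ as the set of vectors $(u,\xi,v)$ with $u,v$ the $\Hor_0$-lifts of $ds(\xi)$ and $dt(\xi)$. This subspace is closed under $dm$ and $d\iota$ and contains the units $\Hor_0$. Since $d\pi$ is $(u,\xi,v)\mapsto\xi$, it is a complement to $\ker d\pi$.

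For (3), a locally trivial family $\grG\to B$ with $\grH=B\tto B$ the trivial groupoid, I would reduce to a connection by groupoid automorphisms. Locally $\grG|_U\cong U\times G_0$, and the horizontal bundle $TU\times 0$ is multiplicative there. Gluing these is the step Proposition \ref{prp: LC imply MC} handles: convex combinations of multiplicative horizontal distributions are not multiplicative in general, since the multiplicative condition is not affine in the obvious way. I would therefore instead phrase local triviality as a fibre bundle with structure group $\mathrm{Aut}(G_0)$ (possibly infinite-dimensional). A connection in the paper's sense I expect is a ``linear'' connection, i.e.\ a choice of local trivializations related by automorphisms, so the lift is then automatically multiplicative.

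For (2) and (4) I would use averaging. Start from an arbitrary splitting $\sigma$ of $d\pi$ as a \VB-groupoid map, or, if none exists, from a quasi-action / Cartan connection. Then apply the Crainic--Struchiner / Fernandes--Marcut averaging with respect to a Haar system on the proper groupoid to correct it to a multiplicative splitting. For uniform fibrations, uniformity (the kernel being a bundle of groupoids of constant type) should supply a flat enough local model so that the cohomological obstruction is a differentiable cohomology class of a proper groupoid with coefficients in a representation up to homotopy. That class vanishes by properness. For proper families of Lie groupoids (4), I would argue via the linearization theorem to reduce to the uniform case.

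The main obstacle I expect is (4). There the base map is a family, so the kernel is not a \VB-groupoid of constant rank over the whole base. The averaging must then be done fibrewise over $B$ with a smoothly varying Haar system, and the obstruction class must be shown to vanish smoothly in parameters.
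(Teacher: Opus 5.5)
Item (1) is the paper's proof of Proposition \ref{prp:Morita_fib}: your $\Hor$ is the pullback \VB-groupoid $\overline{\pi}_0^{*}T\grH$ along $\overline{\pi}_0=T\pi_0|_{\Hor_0}$, and your mirrored fibre-product convention is harmless. Item (3) rests on a false claim. Convex combinations of multiplicative lifts \emph{are} multiplicative when the coefficients are invariant, and this is exactly how the paper proves Proposition \ref{prp: LC imply MC}. Take a partition of unity $\{\chi^\alpha\}$ on $N$, the flat local lifts $\hor^\alpha$ coming from the trivialisations, and set $\hor=\sum_\alpha(\pi^*\chi^\alpha)\hor^\alpha$. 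For a composable pair $(g,h)$ in a family one has $\pi(g)=\pi(h)=\pi(gh)=:x$. So $(\hor_g(u),\hor_h(u))$ is a combination, with the same coefficients $\chi^\alpha(x)$, of the pairs $(\hor^\alpha_g(u),\hor^\alpha_h(u))\in T_{(g,h)}\grG^{(2)}$. Linearity of $T_{(g,h)}\grm$ then gives $T\grm(\hor_g(u),\hor_h(u))=\sum_\alpha\chi^\alpha(x)\hor^\alpha_{gh}(u)=\hor_{gh}(u)$, and the same works for $T\grs$, $T\grt$, $T\gri$. Your substitute is not an argument. Trivialisations related by $\mathrm{Aut}(\mathcal{F})$-valued transition maps do not define a connection: the flat lifts $TU^\alpha\times 0$ disagree on overlaps unless the transition maps are locally constant. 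Moreover, constructing a principal connection on a bundle with an infinite-dimensional structure group like $\mathrm{Aut}(\mathcal{F})$ is the same gluing problem, and it is solved by the same convex combination.

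For (2), an averaging or obstruction argument that only invokes properness cannot work. By Corollary \ref{cor:action}, a non-trivial action of a compact connected group on a compact manifold gives a compact Lie groupoid fibration with no MEC at all. Everything therefore depends on what uniformity contributes, and you misidentify it. Uniformity is not a condition on the kernel; it says that $\pi^{*}\colon g\mapsto(\grt(g),\pi(g),\grs(g))$ is a surjective submersion onto $\pi_0^{*}\grH$. Also, a splitting that is already a \VB-groupoid map is a MEC, so that starting point is circular. The missing idea is the factorisation $\pi=\mathrm{pr}\circ\pi^{*}$:
\begin{itemize}
\item $\pi^{*}$ is an extension of the proper groupoid $\grG$, so it has a MEC by Theorem \ref{thm:Fernandes};
\item $\mathrm{pr}$ is a Morita fibration, covered by (1);
\item MECs compose via $\Hor^1\cap(T\pi^1)^{-1}(\Hor^2)$.
\end{itemize}

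For (4), the proposed reduction to the uniform case fails. A family $\pi\colon\grG\to N$ is uniform only if $\pi^{*}\colon\grG\to M\fp{\pi_0}{\pi_0}M$ is onto, i.e.\ only if every $\grG_y$ is transitive, and passing to linear models does not create transitivity. No parametrised obstruction theory is needed. The paper composes an Ehresmann connection for $\pi_0$ with one for $\grs\colon\grG\to M$, so that each lifted vector field $X=\tilde{\hor}(Y)$, $Y\in\mathfrak{X}(N)$, is $\grs$-projectable. This projectability is what makes $\widehat{X}_g=\int_{\grs(g)}T\grm\big(X_{gh},T\gri(X_h)\big)\,dh$ well defined, and $\widehat{X}$ is multiplicative. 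Since $\pi(gh)=\pi(g)$, the assignment $Y\mapsto\widehat{X}$ is a $\pi$-projectable, $C^\infty(N)$-linear lift. Lemma \ref{lema:multiplicative_lift} then shows that the resulting connection is multiplicative.
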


The main results of the paper are about the completeness of multiplicative Ehresmann connections. Theorem \ref{thm: compl ker imply compl total for Lie grpd fib}, which generalizes \cite{Fernandes2023} from extensions to fibrations, reduces the problem to the completeness of an induced connection on the kernel. Further, under the assumption of source-connectivity of the kernel, Corollary \ref{coro:base_of_complete_is_loc_triv} and Theorem \ref{thm: complete for s-conn} reduce the problem to completeness of the base connection. The following statement summarises the results from Section \ref{sec:compl}.

\begin{theorem}
	Any multiplicative Ehresmann connection $\Hor\tto \Hor_0$ on a Lie groupoid surjective submersion $\pi\colon \grG\to \grH$ induces a multiplicative Ehresmann connection $\Hor^{\grK}\tto \Hor_0$ on the kernel $\grK: = \ker\pi\to N$. The following implications hold regarding the completeness of these connections

	\begin{center}
		\begin{tikzpicture}[arrow/.style = {-implies, double, double distance = 2pt}, node distance = 1.2cm]
			\node (full) {$\Hor$ is complete};
			\node (kern) [right = of full] {$\Hor^{\grK}$ is complete};
			\node (base) [right = of kern] {$\Hor_0$ is complete};

			\draw[arrow] (full) to (kern);
			\draw[arrow] (kern) to (base);

			\draw[arrow, dashed] ($(kern.south)!0.5!(kern.south west)$) to[in = 345, out = 195] node[below, midway] {$\pi$ is a fibration} ($(full.south east)!0.5!(full.south)$);
			\draw[arrow, dashed] ($(base.south)!0.5!(base.south west)$) to[in = 345, out = 195] node[below, midway] {$\grK$ is source-connected} ($(kern.south east)!0.5!(kern.south)$);
		\end{tikzpicture}
	\end{center}
	where the dashed arrows assume the condition written underneath them.
\end{theorem}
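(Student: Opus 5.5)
The plan is to prove the five assertions in turn: existence of $\Hor^{\grK}$, the two solid implications, and the two dashed ones. Completeness throughout means that every path in the base has horizontal lifts through every point, defined on the whole interval. I will use the characterization of multiplicative connections by horizontal paths (Proposition \ref{prp: cond MC}): the horizontal paths of $\Hor$ form a subgroupoid of the path groupoid of $\grG$.

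\emph{Construction of $\Hor^{\grK}$.} The kernel is $\grK=\pi^{-1}(1_N)\subset\grG$, a Lie groupoid over $M$, viewed as a family over $N$ through $\pi_0\circ s=\pi_0\circ t$. Since $\pi$ is a submersion, $\grK$ is an embedded submanifold. For $k\in\grK$ with $\pi(k)=1_x$, I set $\Hor^{\grK}_k:=\Hor_k\cap T_k\grK$. The key point is that $\Hor_{1_x}$ is the image $Tu(\Hor_0)$ of $\Hor_0$ under the unit map, by multiplicativity. Hence the horizontal lift of a vector tangent to $1_N$ is tangent to $\grK$. Because $d\pi\colon\Hor_k\to T_{1_x}\grH$ is an isomorphism, $\Hor^{\grK}_k$ maps isomorphically onto $T_{1_x}(1_N)\cong T_xN$, so $\Hor^{\grK}$ is a complement to $\ker d(\pi_0\circ s)|_{\grK}$. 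It is a \VB-subgroupoid over $\Hor_0$ because it is the intersection of two subgroupoids of $T\grG$.

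\emph{Solid arrows.} For $\Hor\Rightarrow\Hor^{\grK}$: a path $\gamma$ in $N$ lifts to $1\circ\gamma$ in $\grH$. Its $\Hor$-lift starting at $k\in\grK$ exists for all time and stays in $\pi^{-1}(1_N)=\grK$, so it is the $\Hor^{\grK}$-lift. For $\Hor^{\grK}\Rightarrow\Hor_0$: the source map of $\grK$ sends $\Hor^{\grK}$-horizontal paths to $\Hor_0$-horizontal paths. Lifting starting at the unit $1_m$ and applying $s$ gives a complete $\Hor_0$-lift through $m$.

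\emph{Dashed arrow 1 ($\pi$ a fibration).} Fix a path $h(t)$ in $\grH$ and $g_0\in\grG$ over $h(0)$. First lift the source path $s\circ h$ to an $\Hor_0$-horizontal path $m(t)$ from $s(g_0)$; this is complete by the previous step. Because $\pi$ is a fibration, the map $\grG\to\grH\fp{s}{\pi_0}M$ is a surjective submersion, so I can choose some path $g'(t)$ in $\grG$ covering $h(t)$ with source $m(t)$. Next I look for the horizontal lift in the form $g(t)=g'(t)\cdot k(t)$ with $k(t)\in\grK$, where $k$ is a path of arrows from $m(t)$ to the source of $g(t)$. Multiplicativity should turn the horizontality condition into a time-dependent ODE on $\grK$ of the form
\[
\dot k=\hor^{\grK}(\dot{\gamma})+(\text{right-invariant vertical correction}),
\]
whose driving vector field comes from a $\grK$-valued connection form. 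I would mimic the extension case of \cite{Fernandes2023}. The non-explosion of solutions should then follow because right-invariant time-dependent flows on a groupoid are complete whenever their base part is complete, and the base part is controlled by $\Hor^{\grK}$. This step is the main obstacle: one has to write the horizontal condition correctly using the multiplication $d m(\xi,\eta)$ of the \VB-groupoid, and show that the vertical part is invariant so that it cannot blow up. If this ODE decomposition proves awkward, the fallback is to argue by contradiction: take a maximal lift on $[0,T)$ and use the groupoid structure of horizontal paths to translate a local lift near $T$ back to the maximal one.

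\emph{Dashed arrow 2 ($\grK$ source-connected).} Given a path $\gamma$ in $N$ and $k\in\grK$ with $s(k)=m$, $\Hor_0$-completeness gives complete lifts of $\gamma$ through every point of $M$. By source-connectivity, write $k$ as a product of exponentials of sections of the Lie algebroid of $\grK$. I then transport these sections along the $\Hor_0$-flow using the infinitesimal multiplicative connection, which is an $\Hor_0$-linear connection on $A_{\grK}$. This should give complete horizontal lifts for products, since horizontal paths are closed under multiplication. Alternatively, show that the set of $k$ admitting complete lifts is an open and closed subgroupoid containing the units, hence all of $\grK$ by source-connectedness. The closedness part is where care is needed; I would try to obtain it from the groupoid multiplication of horizontal paths together with a local uniform existence time.
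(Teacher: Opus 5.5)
There is a genuine gap in the fibration arrow ($\Hor^{\grK}$ complete $\Rightarrow$ $\Hor$ complete), which is the heart of the statement. Your construction of $\Hor^{\grK}$, the two solid arrows and the source-connected arrow are essentially fine.

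\textbf{The global path $g'$.} Your main route first picks a path $g'(t)$ on all of $I$ with $\pi(g'(t))=h(t)$ and $\grs(g'(t))=m(t)$. The only justification is that $\pi\times\grs$ is a surjective submersion, and that is not enough. Surjective submersions need not lift paths at all: the inclusion $(-\infty,1)\sqcup(0,\infty)\to\mathbb{R}$ admits no lift of $t\mapsto 3t-1$, whatever the starting point (compare Example~\ref{ex: compl ker not imply compl total}). Producing such a global $g'$ is a path-lifting problem of the same nature as the one you are trying to solve.

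\textbf{The ansatz and the ODE.} Even granting $g'$, the ansatz $g=g'\cdot k$ fails. Since $\grs(g(t))=m(t)=\grs(g'(t))$, it would force $\grt(g')=\grt(g)$, which an arbitrary $g'$ does not satisfy; the kernel correction must act on the left, $k(t)=g(t)g'(t)^{-1}$. Moreover, non-explosion of the resulting ODE (a horizontal term plus a translate of the vertical part of $\dot g'$) is only asserted. Completeness of $\Hor^{\grK}$ alone does not control the isotropy directions of $\grK$. You would still need a variation-of-constants argument and a completeness result for invariant time-dependent flows.

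\textbf{What the fallback is missing.} Your fallback is the right idea, but it omits exactly the mechanism by which the two hypotheses enter. Let $\tau^t_\gamma(g)$ be a maximal lift on $[0,t_{\mathrm m})$.
\begin{enumerate}
\item Completeness of $\Hor_0$, which you did establish, gives the endpoint $\tau^{t_{\mathrm m}}_{\grs(\gamma)}(\grs(g))$.
\item The fibration property is used only once, at the single time $t_{\mathrm m}$: choose $g'$ with $\pi(g')=\gamma(t_{\mathrm m})$ and $\grs(g')$ equal to that endpoint.
\item By Proposition~\ref{prp: cond MC}, the local $\Hor$-lift $t\mapsto\tau^{t,t_{\mathrm m}}_\gamma(g')$ near $t_{\mathrm m}$ has the same source path as the maximal lift. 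Hence $\eta(t)=\tau^t_\gamma(g)\cdot\big(\tau^{t,t_{\mathrm m}}_\gamma(g')\big)^{-1}$ is defined on $(t_{\mathrm m}-\delta,t_{\mathrm m})$, is horizontal, and lies in $\grK$ over $\gru\circ\grt\circ\gamma$. So it is $\Hor^{\grK}$-horizontal.
\item Completeness of $\Hor^{\grK}$ extends $\eta$ past $t_{\mathrm m}$. Then $\eta\cdot\tau^{\cdot,t_{\mathrm m}}_\gamma(g')$ extends the maximal lift, contradicting maximality.
\end{enumerate}
Without this argument, the dashed fibration arrow is not proved.

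\textbf{Two smaller remarks.}
\begin{itemize}
\item For the source-connected arrow, your second alternative is correct. It is essentially the paper's argument, which phrases it via the multiplicative vector fields $\hor^{\grK}(X)$ and the domain of their time-one flow. The ``closedness'' you worry about is automatic. If $U$ is an open subgroupoid containing the units and $g\notin U$, then $(U\cap\grs^{-1}(\grt(g)))\cdot g$ is an open neighbourhood of $g$ in $\grs^{-1}(\grs(g))$ disjoint from $U$. Apply this to the fibre groupoid $\grK_{\gamma(0)}$, whose source fibres are those of $\grK$.
\item In the construction of $\Hor^{\grK}$, the claim $\Hor_{1_x}=T\gru(\Hor_0)$ fails for rank reasons; only $\Hor\cap TM=\Hor_0$ holds. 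It is not needed anyway: tangency of the lifts to $\grK$ is automatic, because $T_k\grK=(T_k\pi)^{-1}(T\gru(N))$.
\end{itemize}
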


Moreover, under the assumption of source-connectivity of $\grH$, we show that being a fibration is also a necessary condition for completeness (Proposition \ref{prp: complt MC and s-conn imply Lgrpd fib}).

The final Section \ref{sec:loc_triv_fam} discusses a groupoid analogue of the equivalence between locally trivial fibrations and surjective submersions admitting complete Ehresmann connections---a result, whose correct proof was found only recently by Mathias del Hoyo \cite{delHoyo2016}. By using the same techniques, we obtain a result for source-proper families of Lie groupoids in Theorem \ref{thm: s-proper implies existence of multiplicative connection}, which we state as an equivalence.
\begin{theorem}
	Let $\pi\colon(\grG\tto M)\to N$ be a family of Lie groupoids, where $\grG$ is a source-proper Lie groupoid. Then $\pi$ is a locally trivial family of Lie groupoids if and only if it admits a complete multiplicative Ehresmann connection.
\end{theorem}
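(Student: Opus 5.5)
The plan has two directions: the easy one is a direct construction, and the substantive one follows del Hoyo's argument for ordinary submersions, carried out compatibly with the groupoid structure.

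\emph{Locally trivial implies a complete connection.} Assume $\pi$ is locally trivial. Then $N$ is covered by opens $U_i$ with isomorphisms of families $\grG|_{U_i}\cong U_i\times \grG_i$ over $U_i$, where $\grG_i\tto M_i$ is a fixed Lie groupoid. On each chart take the trivial connection $\Hor=TU_i\times 0$. It is multiplicative, because $TU_i\times 0_{\grG_i}\tto TU_i\times 0_{M_i}$ is a VB-subgroupoid, and it is complete, because horizontal lifts of paths are just $t\mapsto(\gamma(t),g)$. I would glue these with a partition of unity $\{\rho_i\}$ on $N$. The gluing is legitimate because multiplicative connections form an affine space over the multiplicative vertical-valued $1$-forms, and $\pi^*\rho_i$ is multiplicative, being a function pulled back from the base of a family. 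Completeness then follows the standard argument: take a complete Riemannian metric $h$ on $N$ and a proper function. The glued lift of a vector field $X$ on $N$ is a convex combination of lifts that are complete on each chart. Lifting a path $\gamma$ on a compact interval therefore only passes through finitely many charts meeting $\gamma([0,1])$. There, in local trivialisations, the lifted curve solves an ODE whose vertical part stays in compact subsets, since source-properness makes the relevant orbit pieces compact. Alternatively, and more simply, I would first reduce to the case where the family is trivial over each contractible chart and the transition cocycle is locally constant in $\operatorname{Aut}(\grG_i)$. Then the local flat connections agree on overlaps and glue with no averaging at all. I would try this second route first.

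\emph{A complete connection implies local triviality.} Suppose $\Hor$ is a complete multiplicative Ehresmann connection. Fix $n\in N$ and a geodesic ball $U\ni n$ with radial contraction. For $y\in U$, parallel transport along the radial segment from $n$ to $y$ is defined on all of $\grG_n=\pi^{-1}(n)$ by completeness. Multiplicativity means horizontal paths form a subgroupoid of the path groupoid (Proposition \ref{prp: cond MC}), so this transport is a groupoid isomorphism $\grG_n\to\grG_y$ covering the transport $M_n\to M_y$ of $\Hor_0$. The map $U\times\grG_n\to\grG|_U$, $(y,g)\mapsto \tau_{n\to y}(g)$, is smooth by smooth dependence of ODE solutions on parameters. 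It is bijective, with inverse given by transport back, and it is an isomorphism of families. This gives local triviality.

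The main obstacle is the first direction. The point needing care is that source-properness is what is actually used, which suggests the intended proof follows del Hoyo. One builds a connection whose lifts are ``bounded'' with respect to a proper function on $\grG$. Source-properness allows averaging a fibre-metric over the compact source fibres, as in the existence result Theorem \ref{thm: s-proper implies existence of multiplicative connection}, to control the vertical drift, so that lifts of bounded-length paths stay in compact sets.
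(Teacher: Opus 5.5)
Your direction ``complete MEC $\Rightarrow$ locally trivial'' is fine, and it is exactly what the paper means by parallel transport yielding local groupoid trivializations. The gap is in the converse, which you correctly identify as the main point. Neither of your two routes works.

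\emph{Route A fails.} It is precisely the classical argument whose gap del Hoyo pointed out: a convex combination of complete connections, even flat ones induced by trivializations, need not be complete. Source-properness does not rescue it. Take $N=\mathbb{R}_t$ and the trivial family whose typical fibre is the unit groupoid $\mathbb{R}^2\tto\mathbb{R}^2$; this is source-proper, and every connection on it is multiplicative. The trivializations $(t,x,y)\mapsto(t,x-ty^2,y)$ and $(t,x,y)\mapsto(t,x,y-tx^2)$ induce the complete lifts $\partial_t+y^2\partial_x$ and $\partial_t+x^2\partial_y$. Wherever $\rho_1=\rho_2=\tfrac12$, the glued lift is $\partial_t+\tfrac12(y^2\partial_x+x^2\partial_y)$. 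Its integral curves on the diagonal solve $\dot x=x^2/2$, so starting at $x=y=5$ they escape before $t$ advances by $1/2$. Source-properness only confines arrows to compact sets once the induced curve in the base $F$ of the fibre is confined, via $\grs$-projectability. It gives no control over the drift in $F$ itself, and compactness of orbits is irrelevant because lifts are not confined to orbits (here the orbits are points).

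\emph{Route B fails.} It presupposes that the transition cocycle can be made locally constant, i.e.\ that the family admits a flat structure. This is false in general. The Hopf fibration $S^3\to S^2$, viewed as a family of unit groupoids with compact fibre, would then carry a complete flat connection. Flat transport over the simply connected base would then trivialize it, which is impossible.

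Your last paragraph gestures at controlling vertical drift but gives no mechanism. Citing Theorem~\ref{thm: s-proper implies existence of multiplicative connection} there is circular, since that theorem is what has to be proved.

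\emph{The missing idea.} Completeness is forced by invariant ``walls'', not by estimates. A partition of unity pulled back from $N$ is constant along the fibres, so it can never produce such walls, whatever choices you make. The paper's construction runs as follows.
\begin{enumerate}
\item Glue the local flat MECs with a $\grG$-invariant partition of unity on $M$, which exists because $\grG$ is proper, pulled back by $\grs$. Invariance is what keeps the result multiplicative.
\item Use source-properness to show that the orbit map $F\to F/\grF$ is proper (Lemma~\ref{lem: s-proper gives proper quotient}). This yields an $\grF$-invariant proper function $f\colon F\to[0,\infty)$.
\item Take $S^\alpha=f^{-1}(\{n(i,\alpha)\}_i)$. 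These sets are closed, invariant, and the components of their complements are precompact.
\item Choose the levels $n(i,\alpha)$ inductively so that the sets $Z^\alpha=(\psi_0^\alpha)^{-1}(\overline{U}^\alpha\times S^\alpha)$ are pairwise disjoint.
\item Take the partition of unity subordinate to $W^\alpha=\pi_0^{-1}(V^\alpha)\setminus\bigcup_{\beta\neq\alpha}Z^\beta$.
\end{enumerate}
Then at every arrow whose source lies in $Z^\alpha$, the glued connection equals the $\alpha$-th flat one. Horizontal lifts therefore cannot cross these walls. They are trapped in regions that are precompact because $f$ and $\grs$ are proper, and Lemma~\ref{lem: compl conn on fb} gives completeness.
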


Appendix~\ref{app: Lgrpd fib} contains a discussion on the relation between notions of Lie groupoid fibration; in particular, Theorem \ref{thm:weaking_fibration} gives a set of easily checkable conditions for a Lie groupoid surjective submersion to be a fibration. Appendix~\ref{app: proofs} is devoted to Proposition~\ref{prp: cond VBsubgrpd}, which gives a criterion for identifying \VB-subgroupoids.

\subsection*{Acknowledgements}
We would like to thank Rui Loja Fernandes, Matias del Hoyo, Madeleine Jotz, and \v{Z}an Grad for useful discussions about multiplicative Ehresmann connections, and for their general interest in this project. We are also very grateful to Jo\~{a}o Mestre, who helped us understand the techniques of \cite{CrMeStr20}, which were used in the proof of Theorem \ref{thm:prop_fam}, and to Luca Vitagliano, who found for us Example \ref{ex:Luca}.

\section{Preliminaries}
\subsection{Assumptions and conventions}
Throughout the paper, all Lie groupoids are assumed to be Hausdorff, ensuring the uniqueness of solutions to the relevant flow problems. We denote a Lie groupoid by $\grG\tto M$, where $\grG$ is the space of arrows, $M$ is the space of objects. The structure maps are written as $\grs$ = source, $\grt$ = target, $\grm$ = multiplication, $\gru$ = unit, and $\gri$ = inversion. The source-fibres will be simply denoted by $\grs^{-1}(x)$, and we will avoid notations like $\grG_{x}$ or $\grG(x,-)$; similarly for the target-fibres. We will identify the base with the unit section $M\cong \gru(M)\subset \grG$.

\subsection{Classes of groupoid maps}
\label{ss:classes of maps} We introduce the main classes of Lie groupoid morphisms used in this paper. The terminology follows \cite{Mackenzie2005}*{Section 2.6} and \cite{delHoyo2019}*{Section 2.1}.
\begin{definition}
	\label{def:grp_fibr} A Lie groupoid morphism $\pi\colon \grG \to \grH$ with base map $\pi_0\colon M\to N$ such that $\pi$ (and therefore also $\pi_0$) is a surjective submersion is called
	\begin{enumerate}
		\item a \emph{Lie groupoid surjective submersion};

		\item a \emph{Lie groupoid fibration}, if additionally the map
		      \begin{equation}
			      \label{eq: fibration}\pi\times \grs \colon \grG\to \grH \fp{\grs}{\pi_0}M,\qquad g\mapsto (\pi(g),\grs(g))
		      \end{equation}
		      is a surjective submersion;

		\item an \emph{action morphism}, if the map from \eqref{eq: fibration} is a diffeomorphism;

		\item a \emph{family of Lie groupoids}, if $\mathcal{H} = N\tto N$ is the unit groupoid;

		\item a \emph{uniform Lie groupoid fibration}, if the map
		      \begin{equation}
			      \label{eq: unifibration}\pi^{*}\colon \grG\to \pi_0^{*}\grH,\qquad g \mapsto (\grt(g),\pi(g),\grs(g))
		      \end{equation}
		      is a surjective submersion, where $\pi_0^{*}\grH$ is the pullback groupoid:
		      \[
			      \pi_0^{*}\grH: = M\fp{\pi_0}{\grt}\grH\fp{\grs}{\pi_0}M\tto M;
		      \]

		\item a \emph{Morita fibration} (or \emph{inductor}), if the map in \eqref{eq: unifibration} is an isomorphism;

		\item a \emph{Lie groupoid extension}, if the base map is the identity map, i.e.\ $\pi_0 = \mathrm{id}_{M}$.
	\end{enumerate}
\end{definition}

In Appendix \ref{app: Lgrpd fib} we discuss the connections between these notions. In particular, we have the following strict implications:
\begin{center}
	\begin{tikzpicture}[arrow/.style = {-implies, double, double distance = 2pt}, node distance = .65cm]
		\node (sub) {surjective submersion};
		\node (fib) [right = of sub] {fibration};
		\node (uni) [right = of fib] {uniform fibration};
		\node (ext) [right = of uni] {extension};

		\node (act) [above right = .25cm and .6cm of fib] {action morphism};
		\node (fam) [below right = .25cm and .6cm of fib] {family};
		\node (mor) [above right = .25cm and .6cm of uni] {Morita fibration};

		\draw[arrow] (fib) -- (sub);
		\draw[arrow] (uni) -- (fib);
		\draw[arrow] (ext) -- (uni);

		\draw[arrow] (act.south west) -- (fib.north east);
		\draw[arrow] (fam.north west) -- (fib.south east);
		\draw[arrow] (mor.south west) -- (uni.north east);
	\end{tikzpicture}
\end{center}
The kernel $\ker \pi: = \pi^{-1}(\gru(N))\tto M$ of a Lie groupoid surjective submersion $\pi\colon \grG \to \grH$ is a subgroupoid of $\grG$. Furthermore, the restriction $\underline{\pi}\colon\ker\pi\to N$ endows $\ker\pi$ with the structure of a family of Lie groupoids parametrised by $N$, the base of $\grH$. Note that, beyond the case of extensions, the information of the morphism is not contained exclusively in the kernel. This is discussed in detail in \cite{Mackenzie2005}, from where we borrow the following example.
\begin{example}
	Consider a principal bundle $G\acts M\to N$. The groupoid map
	\[
		\pi\colon M\times M\to (M\times M)/G,
	\]
	from the pair groupoid to the gauge groupoid is a uniform Lie groupoid fibration whose kernel is just the identity subgroupoid.
\end{example}

Compared to general Lie groupoid surjective submersions, the situation for Lie groupoid fibrations is considerably better, as the data of the morphism is encoded by a so-called normal subgroupoid system, see again \cite{Mackenzie2005}*{Section~2.4}. This is the deeper reason why we can reduce the completeness problem for multiplicative connections to the kernel (see Theorem \ref{thm: compl ker imply compl total for Lie grpd fib}).

\subsection{\VB-groupoids}
In order to define multiplicative connections, we quickly recall the basics of \VB-groupoids (for a general reference, see \cite{Mackenzie2005} and \cite{BuCadH2016}).

\begin{definition}
	A \emph{\VB-groupoid} is a tuple $(\Gamma, E \sslash \grG, M)$ such that:
	\begin{itemize}
		\item $\Gamma \tto E$ and $\grG \tto M$ are Lie groupoids;

		\item $\tilde{q}\colon \Gamma \to \grG$ and $q \colon E \to M$ are vector bundles;

		\item $\tilde{q}$ is a Lie–groupoid morphism over $q$, and the addition map
		      \[
			      + \colon \Gamma\fp{\tilde{q}}{\tilde{q}}\Gamma \longrightarrow \Gamma
		      \]
		      is a Lie–groupoid morphism covering the addition map
		      \[
			      + \colon E\fp{q}{q}E \longrightarrow E.
		      \]
	\end{itemize}
\end{definition}
A \VB-groupoid $(\Gamma, E \sslash \grG, M)$ will be represented by the diagram:
\[
	\begin{tikzcd}
		\Gamma & E \\
		\grG & M \arrow[shift left, from = 1-1, to = 1-2]
		\arrow[shift right, from = 1-1, to = 1-2]
		\arrow[from = 1-1, to = 2-1]
		\arrow[from = 1-2, to = 2-2]
		\arrow[shift left, from = 2-1, to = 2-2]
		\arrow[shift right, from = 2-1, to = 2-2]
	\end{tikzcd}
\]
We call the Lie groupoid $\Gamma \tto E$ the \emph{total groupoid}, whose structure maps we decorate with a tilde, and the projection $\tilde{q}\colon \Gamma \to \grG$ is the \emph{total vector bundle}. The groupoid $\grG \tto M$ is the \emph{base groupoid}, and $E \to M$ is the \emph{side bundle}. When no ambiguity may occur, we refer to a \VB-groupoid simply by its total space $\Gamma$; when it is useful to emphasise the base, we say that $\Gamma$ is a \emph{\VB-groupoid over $\grG \tto M$}.

A \emph{\VB-groupoid morphism} from $(\Gamma, E \sslash \grG, M)$ to $(\Omega, F \sslash \grH, N)$ is a smooth map $\Phi \colon \Gamma \to \Omega$ which is simultaneously a vector bundle morphism over a Lie groupoid morphism $\phi\colon \grG \to \grH$ and a Lie groupoid morphism $(\Gamma \tto E) \to (\Omega \tto F)$.

A \emph{\VB-subgroupoid} of $\Gamma$ is a \VB-groupoid $\Omega$ together with a \VB-groupoid morphism $\iota\colon \Omega \to \Gamma$ whose underlying map of manifolds is an injective immersion and which identifies $\Omega$ with a vector subbundle of $\Gamma$. Alternatively, these can simply be described as a simultaneous vector subbundle and subgroupoid, see Proposition~\ref{prp: cond VBsubgrpd}.

Consider a \emph{short exact sequence} of \VB-groupoids, all over the same groupoid $\grG\tto M$ and covering the identity morphisms:
\[
	\begin{tikzcd}
		0 & \Gamma & {\Gamma'} & {\Gamma''} & 0 \\
		0 & E & {E'} & {E''} & 0
		\arrow[from = 1-1, to = 1-2]
		\arrow["\iota", hook, from = 1-2, to = 1-3]
		\arrow[shift left, from = 1-2, to = 2-2]
		\arrow[shift right, from = 1-2, to = 2-2]
		\arrow["\pi", two heads, from = 1-3, to = 1-4]
		\arrow[shift left, from = 1-3, to = 2-3]
		\arrow[shift right, from = 1-3, to = 2-3]
		\arrow[from = 1-4, to = 1-5]
		\arrow[shift left, from = 1-4, to = 2-4]
		\arrow[shift right, from = 1-4, to = 2-4]
		\arrow[from = 2-1, to = 2-2]
		\arrow["{{\iota_0}}", hook, from = 2-2, to = 2-3]
		\arrow["{{\pi_0}}", two heads, from = 2-3, to = 2-4]
		\arrow[from = 2-4, to = 2-5]
	\end{tikzcd}
\]

The exactness of the bottom row is implied by that of the top row, as the source maps are surjective and they admit the unit sections as a right inverse. Therefore, we typically only write the top row of a short exact sequence of \VB-groupoids.

Note also that the image of the injective \VB-map $\iota$ is a \VB-subgroupoid of $\Gamma'$, and that $\Gamma''$ is determined up to isomorphism as $\Gamma'/\iota(\Gamma)$. Similarly, if we are only given the surjective \VB-map $\pi$ over $\mathrm{id}_{\grG}$, then it follows that $\ker \pi$ is a \VB-subgroupoid (see \cite{LiBland2014}*{Corollary~C.3} or Proposition \ref{prp: cond VBsubgrpd}).

As for vector bundles (e.g.~\cite{Tu2017}*{Theorem 27.20}), \emph{split} exact sequences of \VB-groupoids are characterised as follows.
\begin{lemma}
	\label{lem: splitting} Consider a short exact sequence of \VB-groupoids over $\grG\tto M$, say
	\[
		\begin{tikzcd}
			0 & \Gamma & {\Gamma'} & {\Gamma''} & 0
			\arrow[from = 1-1, to = 1-2]
			\arrow["\iota", hook, from = 1-2, to = 1-3]
			\arrow["\pi", two heads, from = 1-3, to = 1-4]
			\arrow[from = 1-4, to = 1-5]
		\end{tikzcd}
	\]
	Then, there is a $1$-$1$ correspondence between the following objects:
	\begin{enumerate}
		\item \label{item: splitting; horizontal lift} \VB-groupoid morphisms $h\colon \Gamma''\to\Gamma'$ such that $\pi\circ h = \mathrm{id}_{\Gamma''}$.

		\item \label{item: splitting; vertical projection} \VB-groupoid morphisms $p\colon\Gamma'\to\Gamma$ such that $p\circ \iota = \mathrm{id}_{\Gamma}$.

		\item \label{item: splitting; complement} \VB-subgroupoids $C\subset \Gamma'$ complementing $\iota(\Gamma)$ as a \VB-groupoid.

		\item \label{item: splitting; splitting} Splittings, i.e.\ \VB-groupoid morphisms $\Phi\colon \Gamma'\to\Gamma\oplus\Gamma''$ fitting into the commutative diagram
		      \[
			      \begin{tikzcd}
				      0 & \Gamma & {\Gamma'} & {\Gamma''} & 0 \\
				      0 & \Gamma & {\Gamma\oplus\Gamma''} & {\Gamma''} & 0
				      \arrow[from = 1-1, to = 1-2]
				      \arrow[from = 1-4, to = 1-5]
				      \arrow["\iota", hook, from = 1-2, to = 1-3]
				      \arrow["\pi", two heads, from = 1-3, to = 1-4]
				      \arrow[from = 2-1, to = 2-2]
				      \arrow[from = 2-4, to = 2-5]
				      \arrow["{\mathrm{incl}_1}", hook, from = 2-2, to = 2-3]
				      \arrow["{\mathrm{pr}_2}", two heads, from = 2-3, to = 2-4]
				      \arrow["\Phi", from = 1-3, to = 2-3]
				      \arrow[from = 1-2, to = 2-2, shift left = .5, no head]
				      \arrow[from = 1-2, to = 2-2, shift right = .5, no head]
				      \arrow[from = 1-4, to = 2-4, shift left = .5, no head]
				      \arrow[from = 1-4, to = 2-4, shift right = .5, no head]
			      \end{tikzcd}
		      \]
		      (Remark: it follows that $\Phi$ is an isomorphism.)
	\end{enumerate}
	The correspondences are determined uniquely by
	\[
		h\circ\pi + \iota\circ p = \mathrm{id}_{\Gamma'},\qquad C = \ker p = \mathrm{im} h,\qquad \Phi = (p,\pi),\qquad \Phi^{-1} =  \iota\oplus h.
	\]
	Moreover, in (3), $\pi|_{C}\colon C\to \Gamma''$ is an isomorphism of \VB-groupoids.
\end{lemma}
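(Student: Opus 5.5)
The plan is to reduce everything to the classical vector-bundle statement (e.g.~\cite{Tu2017}*{Theorem 27.20}), applied to the short exact sequence of total vector bundles over $\grG$. The only genuinely new point is that each object is multiplicative exactly when the corresponding objects are. Concretely, I will first record the classical $1$-$1$ correspondences at the level of vector bundles over $\grG$:
\[
h\circ\pi+\iota\circ p=\mathrm{id}_{\Gamma'},\quad C=\ker p=\mathrm{im}\, h,\quad \Phi=(p,\pi),\quad \Phi^{-1}=\iota\oplus h.
\]
Then I will check that, under these formulas, one map is a Lie groupoid morphism if and only if the others are. Linearity over $\mathrm{id}_{\grG}$ is automatic, so only compatibility with the groupoid structures $\Gamma\tto E$, $\Gamma'\tto E'$, $\Gamma''\tto E''$ needs attention.

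The steps, in order, are as follows.

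(1)$\Rightarrow$(2). If $h$ is a groupoid morphism (covering some $h_0\colon E''\to E'$), then $h\circ\pi$ is a groupoid morphism $\Gamma'\to\Gamma'$. The map $\mathrm{id}-h\circ\pi$ is then a groupoid morphism too: the addition and scalar multiplication of a \VB-groupoid are groupoid morphisms, so pointwise linear combinations of \VB-morphisms over $\mathrm{id}_{\grG}$ are again \VB-morphisms. This uses that, for composable $(\xi_1,\xi_2)$ and $(\eta_1,\eta_2)$ over the same pair $(g_1,g_2)$, we have $(\xi_1-\eta_1)(\xi_2-\eta_2)=\xi_1\xi_2-\eta_1\eta_2$, which is the interchange law. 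Its image lies in $\iota(\Gamma)$, and $\iota\colon\Gamma\to\iota(\Gamma)$ is an isomorphism of \VB-groupoids onto a \VB-subgroupoid. Hence $p=\iota^{-1}\circ(\mathrm{id}-h\circ\pi)$ is a \VB-groupoid morphism. The converse (2)$\Rightarrow$(1) is symmetric: $\mathrm{id}-\iota\circ p$ is multiplicative and vanishes on $\iota(\Gamma)$, so it descends along $\pi$. Descending requires that $\pi$ be a surjective groupoid morphism which is a submersion, so that the induced map $h$ is smooth and multiplicative. Multiplicativity follows because for composable $\zeta_1,\zeta_2\in\Gamma''$ one can choose composable lifts in $\Gamma'$. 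To get these lifts, lift $\zeta_2$ arbitrarily, lift $\zeta_1$ arbitrarily, and correct the source of the first lift by an element of $\iota(\Gamma)$ using surjectivity of $\tilde\grs$ on $\Gamma$ together with exactness on the sides.

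(2)$\Leftrightarrow$(3). Given $p$, the set $C=\ker p$ is the kernel of a fibrewise surjective \VB-morphism, hence a \VB-subgroupoid by Proposition~\ref{prp: cond VBsubgrpd}, and it complements $\iota(\Gamma)$. Conversely, given $C$, the restriction $\pi|_C\colon C\to\Gamma''$ is a bijective \VB-morphism, hence a \VB-isomorphism; in particular its inverse is a groupoid morphism. Setting $h=\iota_C\circ(\pi|_C)^{-1}$ returns us to (1), and this simultaneously proves the final claim of the lemma.

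(1)\&(2)$\Leftrightarrow$(4). The map $\Phi=(p,\pi)$ is multiplicative exactly when both components are, since $\Gamma\oplus\Gamma''$ carries the componentwise groupoid structure. Its inverse $\iota\oplus h$, meaning $(a,b)\mapsto\iota(a)+h(b)$, is multiplicative by the interchange law again. So $\Phi$ is an isomorphism of \VB-groupoids.

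I expect the main obstacle to be the descent step (2)$\Rightarrow$(1), or equivalently the claim that the inverse of a bijective \VB-morphism, or a quotient map, is a groupoid morphism on the side bundles as well. The difficulty is making sure the base maps $h_0,p_0$ exist and that the composable-lift argument works. I would handle it by first defining the side maps directly, using that the sequence of side bundles is exact, and checking that they are compatible with $\tilde\grs,\tilde\grt$ via the unit sections. After that, multiplicativity follows from injectivity of $\iota$ (respectively surjectivity of $\pi$) on composable pairs.
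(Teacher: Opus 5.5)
Your argument is correct, but it is organised differently from the paper's.

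The paper makes (4) the hub. From $h$ it forms $\iota\oplus h$, which is multiplicative because $\iota$, $h$ and $+$ are, and sets $\Phi=(\iota\oplus h)^{-1}$. From $p$ it takes $\Phi=(p,\pi)$. Conversely it sets $p=\mathrm{pr}_1\circ\Phi$ and $h=\Phi^{-1}\circ\mathrm{incl}_2$, and it leaves (3)$\Leftrightarrow$(4) to ``diagram chasing''. This is shorter, but it tacitly uses that the inverse of a bijective \VB-groupoid morphism is again one. It also never says why $\ker p$ is a \VB-subgroupoid.

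You instead link (1) and (2) directly, via $\iota^{-1}\circ(\mathrm{id}-h\circ\pi)$ and a descent of $\mathrm{id}-\iota\circ p$ along $\pi$. You handle (3) through Proposition~\ref{prp: cond VBsubgrpd} and the inverse of $\pi|_C$. This makes explicit exactly what the paper glosses over.

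The descent step you flag as the main obstacle is actually redundant in your own scheme, because (2)$\Rightarrow$(3)$\Rightarrow$(1) already gives (2)$\Rightarrow$(1). Inverting $\pi|_C$ is harmless. A groupoid morphism that is bijective on arrows is bijective on units: if $\tilde{\gru}(e'')=\pi(c)$, then injectivity forces $c=\tilde{\gru}(\tilde{\grs}(c))$. It also reflects composability. Smoothness of the inverse holds because $\pi|_C$ is a vector bundle isomorphism over $\mathrm{id}_{\grG}$.

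If you keep the descent, it requires fibrewise surjectivity of $\tilde{\grs}\colon\Gamma_g\to E_{\grs(g)}$. This follows from the rank count in Steps~3--4 of the proof of Proposition~\ref{prp: cond VBsubgrpd}.
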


\begin{proof}
	The analogous statement for vector bundles is standard, and so it implies that the correspondences are uniquely determined. We need only show that these correspondences result in multiplicative maps and \VB-groupoids.

	\eqref{item: splitting; horizontal lift}$\iff$\eqref{item: splitting; splitting}: Suppose $h\colon \Gamma ''\to\Gamma'$ is a right splitting of the short exact sequence, such that $\pi\circ h = \mathrm{id}_{\Gamma''}$. We define the isomorphism of vector bundles
	\[
		\iota\oplus h\colon \Gamma\oplus\Gamma''\to \Gamma'\colon (\gamma,\gamma'')\mapsto \iota(\gamma) + h(\gamma'').
	\]
	As $h$, $\iota$, and $+$ are \VB-groupoid maps, this will define a \VB-groupoid map as well. Defining $\Phi = (\iota\oplus h)^{-1}$ gives a map satisfying condition~\eqref{item: splitting; splitting}.

	Conversely, suppose that $\Phi\colon\Gamma'\to\Gamma\oplus\Gamma''$ is a splitting of the short exact sequence. We can then define a right inverse to $\pi$ as $h\colon\Gamma''\to\Gamma'\colon \gamma''\mapsto \phi^{-1}(\mathrm{incl}_{2}(\gamma''))$.

	\eqref{item: splitting; vertical projection}$\iff$\eqref{item: splitting; splitting}: Suppose that $p\colon \Gamma'\to\Gamma$ is a left splitting, such that $p\circ\iota = \mathrm{id}_{\Gamma}$. We define $\Phi$ as the map $(p,\pi)\colon \Gamma'\to\Gamma\oplus\Gamma''\colon \gamma'\mapsto (p(\gamma'),\pi(\gamma'))$, which is a \VB-groupoid map by the universal property of the direct sum. One easily verifies that this satisfies condition~\eqref{item: splitting; splitting}.

	Conversely, given a splitting $\Phi\colon\Gamma'\to \Gamma\oplus\Gamma''$ we can define $p\colon\Gamma'\to\Gamma = \mathrm{pr}_{1}\circ\Phi$.

	\eqref{item: splitting; complement}$\iff$\eqref{item: splitting; splitting}: One readily verifies this by chasing the diagram.
\end{proof}

\subsection{Multiplicative distributions}

The prototypical example of a \VB-groupoid is the tangent bundle of a Lie groupoid $\grG\tto M$:
\[
	\begin{tikzcd}
		{T\grG} & TM \\
		\grG & M
		\arrow[shift left, from = 1-1, to = 1-2]
		\arrow[shift right, from = 1-1, to = 1-2]
		\arrow[from = 1-1, to = 2-1]
		\arrow[from = 1-2, to = 2-2]
		\arrow[shift left, from = 2-1, to = 2-2]
		\arrow[shift right, from = 2-1, to = 2-2]
	\end{tikzcd}
\]

Distributions and foliations in the multiplicative setting have been studied by several authors, e.g., \ in \cites{Crainic_Salazar, Fernandes2023, Hawkins2008, Jotz2012, JoOr2014, Laurent-Gengoux2009, Xiang2006}.

\begin{definition}
	\label{dfn: mult fol} A \emph{multiplicative distribution} on a Lie groupoid $\grG\tto M$ is a distribution $\mathcal{D}$ on $\grG$ that is also a \VB-subgroupoid $\mathcal{D}\tto \mathcal{D}_0$ of $T\grG\tto TM$.

	If $\mathcal{D}$ is involutive, we will call it a \emph{multiplicative foliation}.
\end{definition}

\begin{remark}
	Multiplicative foliation $\mathcal{D}$ on $\grG\tto M$, the base $\mathcal{D}_0\subset TM$ is also an involutive distribution \cite{Hawkins2008}*{Lemma 5.1}; hence a foliation on $M$. In fact, $(\mathcal{D}, \mathcal{D}_0\sslash \grG, M)$ forms a so-called \LA-groupoid, i.e.\ $\mathcal{D}$ and $\mathcal{D}_0$ are Lie algebroids and all groupoid structure maps are Lie algebroid maps (see \cite{Mackenzie1992}). Multiplicative foliations therefore coincide with \LA-subgroupoid of $T\grG\tto TM$.
\end{remark}

Note that a multiplicative distribution gives rise to a short exact sequence of \VB-groupoids, namely:
\[
	\begin{tikzcd}
		0 & {\mathcal{D}} & {T\grG} & {T\grG/\mathcal{D}} & 0 \\
		0 & {\mathcal{D}_0} & TM & {TM/\mathcal{D}_0} & 0
		\arrow[from = 1-1, to = 1-2]
		\arrow[hook, from = 1-2, to = 1-3]
		\arrow[shift left, from = 1-2, to = 2-2]
		\arrow[shift right, from = 1-2, to = 2-2]
		\arrow["q", two heads, from = 1-3, to = 1-4]
		\arrow[shift left, from = 1-3, to = 2-3]
		\arrow[shift right, from = 1-3, to = 2-3]
		\arrow[from = 1-4, to = 1-5]
		\arrow[shift left, from = 1-4, to = 2-4]
		\arrow[shift right, from = 1-4, to = 2-4]
		\arrow[from = 2-1, to = 2-2]
		\arrow[hook, from = 2-2, to = 2-3]
		\arrow["{{q_0}}", two heads, from = 2-3, to = 2-4]
		\arrow[from = 2-4, to = 2-5]
	\end{tikzcd}
\]

\section{Multiplicative Ehresmann connections}
\label{sec:mEc}

We introduce the main notion studied in this paper. Consider a Lie groupoid surjective submersion $\pi\colon \grG\to\grH$ with base map $\pi_0\colon M\to N$. Notice that $\ker T\pi$ defines a multiplicative distribution on $\grG$; moreover, it gives rise to a short exact sequence of \VB-groupoids:
\[
	\begin{tikzcd}[column sep = large]
		0 & \ker T\pi & {T\grG} & {T\grH\fp{}{\grH}\grG} & 0 \\
		0 & \ker T\pi_0 & TM & {TN\fp{}{N}M} & 0
		\arrow[from = 1-1, to = 1-2]
		\arrow["\iota", hook, from = 1-2, to = 1-3]
		\arrow[shift right = 1, from = 1-2, to = 2-2]
		\arrow[shift left = 1, from = 1-2, to = 2-2]
		\arrow["T\pi\times \mathrm{pr}", two heads, from = 1-3, to = 1-4]
		\arrow[shift right = 1, from = 1-3, to = 2-3]
		\arrow[shift left = 1, from = 1-3, to = 2-3]
		\arrow[from = 1-4, to = 1-5]
		\arrow[shift right = 1, from = 1-4, to = 2-4]
		\arrow[shift left = 1, from = 1-4, to = 2-4]
		\arrow[from = 2-1, to = 2-2]
		\arrow["\iota_0", hook, from = 2-2, to = 2-3]
		\arrow["T\pi_0\times\mathrm{pr}", two heads, from = 2-3, to = 2-4]
		\arrow[from = 2-4, to = 2-5]
	\end{tikzcd}
\]
where $T\grH\fp{}{\grH}\grG\tto TN\fp{}{N}M$ is a fibre product of Lie groupoids.

\begin{definition}
	A \emph{multiplicative Ehresmann connection (MEC)} is defined in either of the following ways, which are equivalent according to Lemma \ref{lem: splitting}:
	\begin{itemize}
		\item \emph{Multiplicative horizontal lift}: A \VB-groupoid morphism
		      \begin{align*}
			      \hor\colon T\grH\fp{}{\grH}\grG\to T\grG \quad                                                                                 & \textrm{such that}\quad (T\pi\times \mathrm{pr})\circ \hor = \mathrm{id}_{T\grH\fp{}{\grH}\grG}. \\
			      \intertext{\item \emph{Multiplicative vertical projection}: A \VB-groupoid morphism}\mathrm{vert}\colon T\grG\to\ker T\pi\quad & \textrm{such that}\quad \mathrm{vert}\circ \iota = \mathrm{id}_{\ker T\pi}.                      \\
			      \intertext{\item \emph{Multiplicative Ehresmann connection}: A \VB-subgroupoid}\Hor\subset T\grG \quad                         & \textrm{such that}\quad T\grG =  \Hor\oplus\ker T\pi.
		      \end{align*}
	\end{itemize}
	Then $\Hor_0\subset TM$, the base of the \VB-groupoid $\Hor\tto \Hor_0$, defines a usual Ehresmann connection for the base map $\pi_0\colon M\to N$. The base of the morphism $\hor$ is the horizontal lift $\hor_0\colon TN\fp{}{N}M\to TM$ with respect to $\Hor_0$.
\end{definition}

\begin{examples}\label{ex: MC}
	The following concepts fit into this framework.
	\begin{enumerate}
		\item If one considers a surjective submersion of manifolds $q\colon M\to B$ as a surjective submersion of Lie groupoids between the unit groupoids, then MECs are just usual Ehresmann connections.

		\item For a vector bundle $\pi\colon E\to M$, viewed as a groupoid map from the bundle of abelian groups to the unit groupoid, MECs are the same as linear connections.

		\item For Lie groupoid extensions $\pi\colon\grG\to\grH$, i.e.\ $\pi_0 =  \mathrm{id}_{M}$, MECs were studied in detail in \cite{Laurent-Gengoux2009}, and later in \cite{Fernandes2023}. In particular, they extend connection forms on principal bundles via the gauge groupoid construction.

		\item Multiplicative, affine connections on a Lie groupoid $\grH\tto N$ have been introduced and studied in \cite{Pugliese2023}. These are the same as affine connections $\nabla$ on $\grH$ which, when seen as an Ehresmann connection on the surjective Lie groupoid surjective submersion $\pi\colon T\grH\to \grH$, are multiplicative in our sense, see \cite{Pugliese2023}*{Proposition~3.3}. The authors of \cite{Pugliese2023} also developed cohomological obstruction theory for their existence, and show that these objects are very restrictive. In particular, their existence implies that $\grH$ is regular and that the connected components of its isotropy groups are abelian.
	\end{enumerate}
\end{examples}

For later use, let us remark that multiplicative Ehresmann connections lift infinitesimal groupoid automorphisms to infinitesimal groupoid automorphisms (for the basics on multiplicative vector fields on Lie groupoids, see \cite{MackXu98}).
\begin{proposition}\label{prop:lift:multipl}
	A multiplicative horizontal lift $\hor\colon T\grH\fp{}{\grH}\grG\to T\grG$ on $\pi\colon\grG\to\grH$ sends multiplicative vector fields to multiplicative vector fields
	\[
		\hor\colon \mathfrak{X}_{\mathrm{mult}}(\grH)\to\mathfrak{X}_{\mathrm{mult}}(\grG).
	\]
\end{proposition}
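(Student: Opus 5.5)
The natural approach is to use the characterisation of multiplicative vector fields as Lie groupoid morphisms: a vector field $X\in\mathfrak{X}(\grH)$ is multiplicative if and only if $X\colon \grH\to T\grH$ is a Lie groupoid morphism from $\grH\tto N$ to $T\grH\tto TN$, covering some vector field $X_0\in\mathfrak{X}(N)$ \cite{MackXu98}. Given such an $X$, the horizontal lift is the vector field
\[
	\hor(X)\colon \grG\to T\grG,\qquad g\mapsto \hor\bigl(X(\pi(g)),g\bigr),
\]
and the plan is to write it as a composition of groupoid morphisms.

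First, I will check that the map $\tilde X\colon \grG\to T\grH\fp{}{\grH}\grG$, $g\mapsto (X(\pi(g)),g)$, is a Lie groupoid morphism covering $x\mapsto (X_0(\pi_0(x)),x)$ from $M$ to $TN\fp{}{N}M$. The fibre product groupoid has componentwise structure maps, so this follows from two facts: $X\circ\pi$ is a groupoid morphism, being a composition of the morphisms $\pi$ and $X$; and the identity is a morphism. Smoothness is clear, and the map lands in the fibre product because $X$ is a section. Compatibility with sources and targets comes from $\grs_{T\grH}\circ X=X_0\circ\grs$ and $\grs\circ\pi=\pi_0\circ\grs$. Compatibility with multiplication uses $X(\pi(gh))=X(\pi(g))\cdot X(\pi(h))$ in $T\grH$.

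Second, $\hor$ is by definition a \VB-groupoid morphism, in particular a Lie groupoid morphism $T\grH\fp{}{\grH}\grG\to T\grG$ covering $\hor_0$. Hence $\hor(X)=\hor\circ\tilde X\colon\grG\to T\grG$ is a Lie groupoid morphism covering $\hor_0(X_0)\colon M\to TM$. It is a section of $T\grG\to\grG$, because $\hor$ covers the projection $\mathrm{pr}\colon T\grH\fp{}{\grH}\grG\to\grG$ as a vector bundle map. So $\hor(X)$ is a multiplicative vector field on $\grG$, with base vector field $\hor_0(X_0)$, the $\Hor_0$-horizontal lift of $X_0$.

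The only delicate point is making sure that the groupoid structure on $T\grH\fp{}{\grH}\grG$ is the componentwise one, so that $\tilde X$ is genuinely a morphism. I expect this to hold because the fibre product is taken over the morphism $\pi$ and the projection $T\grH\to\grH$, both of which are groupoid morphisms, and the latter is a surjective submersion, so the fibre product is a Lie groupoid with componentwise operations. Beyond that, the argument is a formal composition of morphisms.
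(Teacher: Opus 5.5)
Your proposal is correct and takes the same route as the paper. The paper also views $X$ as a groupoid morphism $\grH\to T\grH$, notes that the pullback section $g\mapsto (X_{\pi(g)},g)$ is a groupoid morphism into the fibre-product groupoid $T\grH\fp{}{\grH}\grG\tto TN\fp{}{N}M$, and composes with the \VB-groupoid morphism $\hor$. Your additional checks only make explicit what the paper leaves implicit: that the fibre product has componentwise structure maps, that the composite is a section, and that it covers $\hor_0(X_0)$.
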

\begin{proof}
	A vector field $X$ on $\grH$ is multiplicative precisely when $X\colon \grH\to T\grH$ is a groupoid morphism. If this is the case, then so is the pullback section, i.e.\ the map
	\[
		\tilde{X}\colon \grG\to T\grH\times_{\grH}\grG\colon g\mapsto (X_{\pi(g)},g).
	\]
	The vector field $\hor (X)$ is obtained by composing this map with the \VB-groupoid morphism $\hor\colon T\grH\times_{\grH}\grG\to T\grG$. Hence, the result.
\end{proof}
The above proposition does not have a converse and therefore does not fully characterise a multiplicative Ehresmann connection. The following example was constructed by Luca Vitagliano. It shows that, in general, the conclusion of Proposition \ref{prop:lift:multipl} is not sufficient to ensure multiplicativity.
\begin{example}\label{ex:Luca}
	Consider the surjective submersion $\pi\colon\bbR^2\to S^1,\,(x,y)\mapsto e^{ix}$, and remark that this is a Lie group(oid) homomorphism with respect to the standard structures. Any multiplicative vector field on $S^1$ must be the zero vector field. Therefore, any connection $\hor$ on $\pi$ will trivially induce a map
	\[
		\hor\colon \mathfrak{X}_{\mathrm{mult}}(S^1)\to \mathfrak{X}_{\mathrm{mult}}(\bbR^2).
	\]s
	As $\bbR^2$ is an abelian Lie group, the left and right trivialization of $T\bbR^2$ coincide. Moreover, as a Lie group, it is isomorphic to the standard structure on $\bbR^{4}$ via the left/right trivialization. In this global chart, the vertical bundle is given by:
	\[
		\ker T\pi = \mathrm{span}\{(x,y,a,b)\in\bbR^{4}\mid a = 0\}
	\]
	Next, consider a distribution, given in the global chart as:
	\[
		\Hor = \{(x,y,a,b)\in\bbR^{4}\mid b = x^2a\}.
	\]
	Fibrewise, meaning for fixed $(x,y)\in\bbR^2$, it defines a linear complement and thus a connection. However, it is easy to see that $\Hor$ is not a subgroup of $T\bbR^2$ as for any $(x,y,a,b)\in\Hor$, we have that $2(x,y,a,b) = (2x,2y,2a,2b)\notin\Hor$.
\end{example}

\subsection{Lifting of paths}
In this subsection, we interpret multiplicativity of a connection from a geometric perspective, using the lifting of paths. In particular, we obtain characterizations similar to \cite{Xiang2006}*{Definition 2.1}, and generalizing \cite{Laurent-Gengoux2009}*{Proposition 4.3}.

Fix a Lie groupoid surjective submersion $\pi\colon \grG\to \grH$ with an Ehresmann connection
\[
	T\grG = \Hor\oplus \ker T\pi,
\]
and denote the horizontal lift operator by $\hor\colon T\grH\fp{}{\grH}\grG \to T\grG$. Given a curve $\gamma\colon I\to \grH$, where we will always denote $I = [0,1]$, with $h: =  \gamma(0)$ and a point $g\in\pi^{-1}(h)$, there exists a unique \emph{horizontal lift of $\gamma$ through $g$}, namely a curve
\[
	\tau_{\gamma}(g)\colon I^{0}\to M,\qquad t\mapsto \tau^{t}_{\gamma}(g)
\]
solving the ordinary differential equation
\[
	\begin{cases}
		\tfrac{d}{dt}\tau_{\gamma}^{t}(g) = \hor(\tau_{\gamma}^{t}(g),\tfrac{d}{dt}\gamma(t)), & t\in I, \\
		\tau_{\gamma}^{0}(g) = g,
	\end{cases}
\]
where $0\in I^{0}\subset I$ is the maximal domain of the solution. The connection is \emph{complete} precisely when $I^{0} = I$, for all $\gamma$ and $g$ as above.

The \emph{holonomy} of $\gamma$ or \emph{parallel transport} along $\gamma$ is the map
\[
	\tau_{\gamma}^{t}\colon U^{t}\subset \pi^{-1}(\gamma(0))\to \pi^{-1}(\gamma(t)), \qquad g\mapsto \tau_{\gamma}^{t}(g),
\]
where $U^{t}$ is the maximal domain on which it is defined.

Additionally, in the following description of multiplicativity in terms of holonomy, we use the notion of the so-called \emph{current groupoid} structure on the space of smooth paths in a Lie groupoid $\grG\tto M$, studied in \cite{Habib2020}:
\[
	C^{\infty}(I,\grG)\tto C^{\infty}(I,M),
\]
where the structure maps are pointwise, i.e.
\[
	\grs(\gamma)(t) = \grs(\gamma(t)), \ \ \grm(\gamma,\eta)(t) = \grm(\gamma(t),\eta(t)),\ \ 1_{\gamma}(t) = 1_{\gamma(t)},\ \ \gamma^{-1}(t) = (\gamma(t))^{-1}.
\]

\begin{proposition}\label{prp: cond MC}
	Let $\pi\colon\grG\to\grH$ be a Lie groupoid surjective submersion covering $\pi_0\colon M\to N$, and suppose $\Hor$ and $\Hor_0$ be connections for $\pi$ and $\pi_0$, respectively. The following are equivalent:
	\begin{enumerate}
		\item \label{item: cond MC; mult} $\Hor\tto \Hor_0$ is a multiplicative Ehresmann connection.

		\item \label{item: cond MC; lift} The following hold:
		      \begin{enumerate}
			      \item\label{item: cond MC; lift; u} For any $\delta\in C^{\infty}(I,N)$ and any $x\in M$ with $\pi_0(x) = \delta(0)$, if the $\Hor_0$-lift $\tau^{t}_{\delta}(x)$ is defined, then
			            \begin{align*}
				            \gru(\tau_{\delta}^{t}(x))         & = \tau^{t}_{\gru(\delta)}(\gru(x)); \intertext{\item for any $\gamma\in C^{\infty}(I,\grH)$ and any $g\in \grG$ with $\pi(g) = \gamma(0)$, if the $\Hor$-lift $\tau^{t}_{\gamma}(g)$ is defined, then}\grs(\tau^{t}_{\gamma}(g)) = \tau^{t}_{\grs(\gamma)}(\grs(g))                                                                            & ,\quad \grt(\tau^{t}_{\gamma}(g)) = \tau^{t}_{\grt(\gamma)}(\grt(g)), \\
				            \gri\big(\tau_{\gamma}^{t}(g)\big) & = \tau_{\gri(\gamma)}^{t}(\gri(g)); \intertext{\item\label{item: cond MC; lift; m} and for any $\eta\in C^{\infty}(I,\grH)$ with $\grt(\eta) = \grs(\gamma)$, and any $k\in \grG$ with $\grt(k) = \grs(g)$ and $\pi(k) = \eta(0)$, if the $\Hor$-lift $\tau^{t}_{\eta}(h)$ is also defined, then}\grm(\tau_{\gamma}^{t}(g),\tau_{\eta}^{t}(k)) & = \tau_{\grm(\gamma,\eta)}^{t}(\grm(g,k)).
			            \end{align*}
		      \end{enumerate}
	\end{enumerate}
\end{proposition}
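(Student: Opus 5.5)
We prove both implications by differentiating and integrating along curves, using that a horizontal lift is the unique solution of an ODE.

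\emph{(1)$\Rightarrow$(2).} Suppose $\Hor\tto\Hor_0$ is multiplicative. Then the structure maps of $T\grG\tto TM$ restrict to $\Hor\tto\Hor_0$; concretely $T\grs(\Hor)\subset\Hor_0$, $T\grt(\Hor)\subset\Hor_0$, $T\gru(\Hor_0)\subset\Hor$, $T\gri(\Hor)\subset\Hor$, and $T\grm(\Hor\fp{}{}\Hor)\subset\Hor$.

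For item (b), take the lift $c(t)=\tau^t_\gamma(g)$. The curve $\grs(c(t))$ starts at $\grs(g)$ and covers $\pi_0(\grs(c(t)))=\grs(\gamma(t))$. Its velocity is $T\grs(\dot c)\in\Hor_0$, so it is the $\Hor_0$-lift of $\grs(\gamma)$. By uniqueness of solutions (all manifolds are Hausdorff) it equals $\tau^t_{\grs(\gamma)}(\grs(g))$ on the domain of $c$. The same argument handles $\grt$, and it handles $\gri$ using $T\gri$ and $\pi\circ\gri=\gri\circ\pi$. Item (a) is the same argument with $T\gru$. For item (c), $t\mapsto\grm(\tau^t_\gamma(g),\tau^t_\eta(k))$ is well defined because its source and target agree by (b). Its velocity is $T\grm(\dot c_1,\dot c_2)$ with both arguments in $\Hor$, so it lies in $\Hor$. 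It covers $\grm(\gamma,\eta)$ and starts at $\grm(g,k)$, so uniqueness gives the claim.

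\emph{(2)$\Rightarrow$(1).} Fix $g\in\grG$ and $v\in\Hor_g$. Write $v=\hor(g,w)$ with $w=T\pi(v)\in T_{\pi(g)}\grH$, and pick a curve $\gamma$ with $\dot\gamma(0)=w$. Short-time existence of the lift means $\tau^t_\gamma(g)$ is defined for small $t$, and its derivative at $t=0$ is exactly $v$. Differentiating (b) at $t=0$ gives $T\grs(v)=\hor_0(\grs(g),T\grs(w))\in\Hor_0$, and similarly $T\grt(v)\in\Hor_0$ and $T\gri(v)\in\Hor$. Differentiating (a) gives $T\gru(\Hor_0)\subset\Hor$.

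For multiplication, take composable $(v_1,v_2)\in\Hor\times\Hor$, so $T\grs(v_1)=T\grt(v_2)$. Choose $\gamma,\eta$ with the prescribed initial velocities and $\grt(\eta)=\grs(\gamma)$; this is the main obstacle. I would first choose $\eta$ freely with $\dot\eta(0)=T\pi(v_2)$. Then I would choose $\gamma$ in the fibre $\grs^{-1}$ over the curve $\grt(\eta)$, which is possible because $\grs$ is a submersion. The velocities match because
\[
T\grs(T\pi(v_1))=T\pi_0(T\grs(v_1))=T\pi_0(T\grt(v_2))=T\grt(T\pi(v_2)).
\]
Hence $(T\pi(v_1),T\pi(v_2))$ is a composable pair in $T\grH$, and composable pairs of tangent vectors are velocities of composable pairs of curves, since $\grH^{(2)}$ is a submanifold with $T(\grH^{(2)})=T\grH^{(2)}$. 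Differentiating (c) at $t=0$ then gives $T\grm(v_1,v_2)\in\Hor$.

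Finally, $\Hor$ is a vector subbundle of $T\grG$ closed under all structure maps, so it is a subgroupoid of $T\grG\tto TM$ over $\Hor_0$. Proposition~\ref{prp: cond VBsubgrpd} (simultaneous vector subbundle and subgroupoid) then makes $\Hor\tto\Hor_0$ a \VB-subgroupoid. This is the statement that $\Hor$ is a multiplicative Ehresmann connection.

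A subtle point is that (b) assumes $\Hor_0$ is a connection for $\pi_0$ but not that $T\grs(\Hor)\subset\Hor_0$; the differentiation step supplies exactly this inclusion. We must also check that the base of the resulting subgroupoid is indeed $\Hor_0$. This holds because $T\gru(\Hor_0)\subset\Hor$ and $\Hor_0=T\grs(T\gru(\Hor_0))\subset T\grs(\Hor)\subset\Hor_0$.
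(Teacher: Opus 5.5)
Your proposal is correct and follows essentially the same route as the paper. In both directions you use uniqueness of horizontal lifts to pass between the path-level identities and the inclusions $T\grs(\Hor),T\grt(\Hor)\subset\Hor_0$, $T\gru(\Hor_0)\subset\Hor$, $T\gri(\Hor)\subset\Hor$, $T\grm(\Hor,\Hor)\subset\Hor$, and then you invoke Proposition~\ref{prp: cond VBsubgrpd}. Your extra justifications — realising a composable pair in $T\grH$ as the velocity of a curve in $\grH^{(2)}$, and checking that the base of the resulting subgroupoid is exactly $\Hor_0$ — fill in details the paper leaves implicit.
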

\begin{proof}

	\eqref{item: cond MC; mult}$\implies$\eqref{item: cond MC; lift}: Suppose $\Hor\tto\Hor_0$ is a MEC.

	(a): Let $\delta\colon I\to N$ and $x\in M$ be as in the statement. Since  $T\gru\colon TM\to T\grG$ is the unit section of $T\grG$, we have that $T\gru (\Hor_0) = TM\cap \Hor$. This yields
	\[
		\tfrac{d}{dt}\gru(\tau^{t}_{\delta}(x)) =  T\gru \big(\tfrac{d}{dt}\tau^{t}_{\delta}(x)\big)\in T_{\tau^{t}_{\delta}(x)}M\cap \Hor_{\gru(\tau^t_{\delta}(x))}.
	\]
	So the path $t\mapsto \gru\circ \tau^{t}_{\delta}(x)$ is $\Hor$-horizontal. It starts at $\gru(x)$ and satisfies
	\[
		\pi\circ\gru\circ \tau^{t}_{\delta}(x) =  \gru\circ\pi_0\circ \tau^{t}_{\delta}(x) = \gru(\delta)(t).
	\]
	This implies the claimed equality.

	(b): Let $\gamma\colon I\to\grH$ and $g\in\grG$ be as in the statement. The path $t\mapsto \grs(\tau_{\gamma}^{t}(g))\in M$ satisfies:
	\[
		\tfrac{d}{dt}\grs(\tau_{\gamma}^{t}(g)) = T\grs\big(\tfrac{d}{dt}\tau_{\gamma}^{t}(g)\big)\in T\grs(\Hor_{\tau_{\gamma}^t(g)}) = ({\Hor_0})_{\grs(\tau_{\gamma}^t(g))},
	\]
	so it is $\Hor_0$-horizontal. Its base path is
	\[
		\pi_0\circ\grs(\tau_{\gamma}^{t}(g)) = \grs(\pi\circ \tau_{\gamma}^{t}(g)) = \grs(\gamma)(t),
	\]
	which implies that it is the $\Hor_0$-horizontal lift of $\grs\circ \gamma$:
	\[
		\grs(\tau_{\gamma}^{t}(g)) = \tau_{\grs(\gamma)}^{t}(\grs(g)).
	\]
	The proof for the target is similar. Also, since $T\gri(\Hor) = \Hor$, $\gri(\tau_{\gamma}^{t}(g))$ is a $\Hor$-horizontal path covering $\gri(\gamma)$ and starting at $\gri(g)$; hence the equality with the inverses.

	(c): Let $\eta$ and $h$ be as in the statement. The assumptions and the previous result implies that the paths $\tau^{t}_{\gamma}(g)$ and $\tau^{t}_{\eta}(k)$ are composable; indeed
	\[
		\grs(\tau_{\gamma}^{t}(g)) = \tau^{t}_{\grs\circ \gamma}(\grs(g)) = \tau^{t}_{\grt\circ \eta}(\grt(k)) = \grt(\tau_{\eta}^{t}(g)).
	\]
	Next, $T\grm(\Hor,\Hor) = \Hor$ implies that $\grm(\tau_{\gamma}^{t}(g), \tau_{\eta}^{t}(k))$ is a horizontal path. Since
	\[
		\pi(\grm(\tau_{\gamma}^{t}(g), \tau_{\eta}^{t}(k))) = \grm(\pi(\tau_{\gamma}^{t}(g)),\pi(\tau_{\eta}^{t}(h))) = \grm(\gamma(t),\eta(t)),
	\]
	the last equality follows.

	\eqref{item: cond MC; lift}$\implies$\eqref{item: cond MC; mult}: Suppose that the conditions in \eqref{item: cond MC; lift} hold. By Proposition~\ref{prp: cond VBsubgrpd}, we only need to show that $\Hor\tto \Hor_0$ is a subgroupoid of $T\grG\tto TM$.

	First, we show that $T\grs(\Hor), T\grt(\Hor)\subset\Hor_0$ and $T\gru(\Hor_0)\subset\Hor$. Given $v\in\Hor_{g}$, let $\tilde{\gamma}\in C^{\infty}(I,\grG)$ be a $\Hor$-horizontal curve with starting speed $v$. Remark that this is the lift of $\gamma = \pi\circ\tilde{\gamma}$ to $g$. We can conclude that
	\[
		T\grs(v) = \tfrac{d}{dt}\grs(\tilde{\gamma}(t))\big|_{t = 0} =  \tfrac{d}{dt}\grs(\tau_{\gamma}^{t}(g))\big|_{t = 0} =  \tfrac{d}{dt}\tau_{\grs(\gamma)}^{t}(\grs(g))\big|_{t = 0}\in\Hor_0.
	\]
	Similarly, we can conclude that $T\grt(\Hor)\subset\Hor_0$. Given $v\in{\Hor_0}_{x}$, take an integration $\tilde{\gamma}\in C^{\infty}(I,M)^{\Hor_0}$, and set $\gamma = \pi_0\circ\tilde{\gamma}$. Remark that $\tilde{\gamma}$ is the horizontal lift of $\gamma$ to $\tilde{\gamma}(0)$. It follows that:
	\[
		T\gru(v) = \tfrac{d}{dt}\gru(\tilde{\gamma}(t))\big|_{t = 0} =  \tfrac{d}{dt}\gru(\tau^{t}_{\gamma}(x))\big|_{t = 0} =  \tfrac{d}{dt}\tau^{t}_{\gru(\gamma)}(\gru(x))\big|_{t = 0}\in \Hor.
	\]

	Next, we show that $T\gri(\Hor)\subset\Hor$. Let $v\in\Hor_{g}$ and suppose that $\tilde{\gamma}\in C^{\infty}(I,\grG)$ is a $\Hor$-horizontal curve with starting speed $v$. Then this is the horizontal lift of $\gamma = \pi\circ\tilde{\gamma}$ to $g$ such that the following holds:
	\[
		T\gri(v) = \tfrac{d}{dt}\gri(\tilde{\gamma})\big|_{t = 0} =  \tfrac{d}{dt}\gri(\tau^{t}_{\gamma}(g))\big|_{t = 0} =  \tfrac{d}{dt}\tau^{t}_{\gri\circ\gamma}(g^{-1})\big|_{t = 0}\in\Hor.
	\]

	Lastly, let $(u,v)\in(\Hor_{g}\times\Hor_{h})\cap T\grG^{(2)}$. Since $(T\pi(u),T\pi(v))\in T\grH^{(2)}$ there is $(\gamma,\eta)\in C^{\infty}(I,\grH^{(2)})$ with initial speed $(T\pi(u),T\pi(v))$. Then
	\[
		T\grm(u,v) = \tfrac{d}{dt}\grm(\tau^{t}_{\gamma}(g),\tau^{t}_{\eta}(h))\big|_{t = 0} =  \tfrac{d}{dt}\tau^{t}_{\grm(\gamma,\eta)}(gh)\big|_{t = 0}\in\Hor.
	\]
	Therefore, \eqref{item: cond MC; mult} holds.
\end{proof}

Alternatively, we can obtain a description of multiplicative connections directly in terms of the current groupoids, which reduces to a more detailed description in the complete case. For this, we remark that the map
\[
	\pi\times \mathrm{ev}_0\colon C^{\infty}(I,\grG)\to C^{\infty}(I,\grH)\fp{\mathrm{ev}_0}{\pi}\grG,\qquad \tilde{\gamma}\mapsto (\pi\circ \tilde{\gamma},\tilde{\gamma}(0))
\]
is a groupoid morphism, where the groupoid on the right-hand side is the groupoid fibre product over $\grH$ and has base $C^{\infty}(I, N)\fp{\mathrm{ev}_0}{\pi_0}M$, covering $\pi_0\times \mathrm{ev}_0$.
\begin{proposition}\label{prp:current groupoid cond MC}
	Let $\pi\colon\grG\to\grH$ be a Lie groupoid surjective submersion covering $\pi_0\colon M\to N$, and suppose $\Hor$ and $\Hor_0$ be connections for $\pi$ and $\pi_0$, respectively. The following are equivalent:
	\begin{enumerate}
		\item \label{item:current groupoid cond MC:MEC} $\Hor\tto \Hor_0$ is a multiplicative Ehresmann connection;

		\item \label{item:current groupoid cond MC:current groupoid} $C^{\infty}(I,\grG)^{\Hor}\tto C^{\infty}(I,M)^{\Hor_0}$ is a subgroupoid of the current groupoid $C^{\infty}(I,\grG)$.
	\end{enumerate}

	In this case, the map $\pi\times\mathrm{ev}_0$ restricted to $C^{\infty}(I,\grG)^{\Hor}$ is an injective groupoid morphism, and it is an isomorphism exactly when the connection is complete. The inverse is then given by the horizontal lift:
	\[
		\tau\colon C^{\infty}(I,\grH)\fp{\mathrm{ev}_0}{\pi}\grG\diffto C^{\infty}(I,\grG)^{\Hor},\qquad (\gamma,g)\mapsto \big(t\mapsto \tau^{t}_{\gamma}(g)\big).
	\]
\end{proposition}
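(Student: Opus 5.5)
The plan is to reduce the equivalence to Proposition \ref{prp: cond MC}. Everything is pointwise, and horizontal lifts are unique by the Hausdorff assumption, so the two statements should translate into each other directly. The second claim, about $\pi\times\mathrm{ev}_0$, then follows from uniqueness and existence of horizontal lifts.

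For \eqref{item:current groupoid cond MC:MEC}$\implies$\eqref{item:current groupoid cond MC:current groupoid}, let $\tilde\gamma\in C^\infty(I,\grG)^{\Hor}$. Then $\tilde\gamma$ is the $\Hor$-lift of $\pi\circ\tilde\gamma$ through $\tilde\gamma(0)$, and it is defined on all of $I$. Part (b) of Proposition \ref{prp: cond MC} shows that $\grs(\tilde\gamma)$ and $\grt(\tilde\gamma)$ are $\Hor_0$-horizontal and that $\gri(\tilde\gamma)$ is $\Hor$-horizontal. Part (c) shows that the pointwise product of composable horizontal paths is horizontal. Part (a) shows that $\gru(\delta)$ is $\Hor$-horizontal whenever $\delta$ is $\Hor_0$-horizontal.

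One could instead argue infinitesimally: $T\grs(\Hor)\subset\Hor_0$, $T\grm(\Hor\times\Hor)\subset\Hor$, and so on, applied to velocities. That route is shorter, and I would use it.

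For the converse, I would reuse the second half of the proof of Proposition \ref{prp: cond MC}. Given $v\in\Hor_g$, choose a horizontal curve $\tilde\gamma$ with initial velocity $v$. Its image under $\grs$ lies in $C^\infty(I,M)^{\Hor_0}$, so $T\grs(v)\in\Hor_0$. The same argument handles $\grt$, $\gri$ and $\gru$. For multiplication, take $(u,v)\in(\Hor_g\times\Hor_h)\cap T\grG^{(2)}$; I need composable horizontal curves with initial velocities $u$ and $v$. The plan is to start from a curve $(\gamma,\eta)$ in $\grH^{(2)}$ with initial velocity $(T\pi u,T\pi v)$ and take horizontal lifts through $g$ and $h$, shrinking the interval if necessary and reparametrising to $I$.

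This is the main obstacle. Composability of the lifts, i.e.\ $\grs(\tau_\gamma(g))=\grt(\tau_\eta(h))$, needs $\grs(\tau_\gamma(g))$ and $\grt(\tau_\eta(h))$ to be $\Hor_0$-horizontal lifts of the same base path through the same point. That follows once we know $\grs$ and $\grt$ of horizontal paths are horizontal, which was just established from (2). Then (2) says the product path is horizontal, so $T\grm(u,v)\in\Hor$. Proposition \ref{prp: cond VBsubgrpd} then upgrades the subgroupoid $\Hor\tto\Hor_0$ to a \VB-subgroupoid.

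Finally, $\pi\times\mathrm{ev}_0$ is a groupoid morphism, and its restriction to the subgroupoid $C^\infty(I,\grG)^{\Hor}$ is injective by uniqueness of horizontal lifts. Its image consists of the pairs $(\gamma,g)$ whose lift is defined on all of $I$. So the map is surjective exactly when the connection is complete, and in that case the inverse is $\tau$. The inverse of a bijective groupoid morphism is automatically a groupoid morphism, so nothing more needs to be checked.
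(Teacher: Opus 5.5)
Your proposal is correct and follows essentially the same route as the paper. For (1)$\implies$(2) you use the same infinitesimal closure argument: $T\grs(\Hor),T\grt(\Hor)\subset\Hor_0$, $T\gru(\Hor_0)\subset\Hor$, and closure under $T\gri$ and $T\grm$. For (2)$\implies$(1) you use horizontal curves with prescribed initial velocity, get composability of the lifts from uniqueness of $\Hor_0$-lifts, and conclude with Proposition~\ref{prp: cond VBsubgrpd}. For the claim about $\pi\times\mathrm{ev}_0$ you use uniqueness and existence of lifts, as the paper does.

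The only cosmetic difference is how $\Hor\cap TM=\Hor_0$ is obtained. The paper reads it off from the unit space of the path subgroupoid, while you get it implicitly from $T\grs(\Hor)\subset\Hor_0$ and $T\gru(\Hor_0)\subset\Hor$, which is equally valid.
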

\begin{proof}
	\eqref{item:current groupoid cond MC:MEC}$\implies$\eqref{item:current groupoid cond MC:current groupoid}: Suppose that $\Hor\tto\Hor_0$ is a MEC. We claim that
	\begin{equation}\label{eq:base:of:groupoid}
		C^{\infty}(I,\grG)^{\Hor}\cap C^{\infty}(I,M) = C^{\infty}(I,M)^{\Hor_0}.
	\end{equation}
	Indeed, let $\gamma\in C^{\infty}(I,\grG)$. It is in $C^{\infty}(I,\grG)^{\Hor}\cap C^{\infty}(I,M)$ if and only if $\frac{d}{dt}\gamma(t)\in\Hor$ and $\gamma(t)\in M$. Since $\Hor_0 = \Hor\cap\,TM$, this is equivalent to $\frac{d}{dt}\gamma(t)\in \Hor_0$, thus to $\gamma\in C^{\infty}(I,M)^{\Hor_0}$.

	It remains to check that $C^{\infty}(I,\grG)^{\Hor}$ is closed under inversion and multiplication.

	Given a path $\gamma\in C^{\infty}(I,\grG)^{\Hor}$, we remark that:
	\[
		\tfrac{d}{dt}\gri(\gamma)(t) = T\gri\big(\tfrac{d}{dt}\gamma(t)\big)\in T\gri(\Hor) \subset \Hor,
	\]
	so that $\gri(\gamma)\in C^{\infty}(I,\grG)^{\Hor}$.

	If additionally, $\eta\in C^{\infty}(I,\grG)^{\Hor}$ such that $\grs(\gamma) = \grt(\eta)$. Then we find that:
	\[
		\tfrac{d}{dt}\grm(\gamma,\eta)(t) = T\grm\big(\tfrac{d}{dt}\gamma(t),\tfrac{d}{dt}\eta(t)\big)\in T\grm(\Hor,\Hor)\subset\Hor.
	\]
	Therefore, $C^{\infty}(I,\grG)^{\Hor}\tto C^{\infty}(I,M)^{\Hor_0}$ is a subgroupoid of the current groupoid.

	\eqref{item:current groupoid cond MC:current groupoid}$\implies$\eqref{item:current groupoid cond MC:MEC}: The assumption implies Equation~\eqref{eq:base:of:groupoid}, and it is easy to see that this equality implies that $\Hor\cap\,TM = \Hor_0$. Therefore, it suffices to show that $\Hor\subset T\grG$ is a set-theoretical subgroupoid; its base will be $\Hor_0$, and, by Proposition~\ref{prp: cond VBsubgrpd}, it will be a \VB-subgroupoid. So $\Hor\tto \Hor_0$ will be a MEC.

	Let $v\in \Hor$. Consider a path $\tilde{\gamma}\in C^{\infty}(I,\grG)^{\Hor}$ with initial speed $v$. By assumption, $\gri\circ \tilde{\gamma}\in C^{\infty}(I,\grG)^{\Hor}$. Therefore, the initial speed of this path lies in $\Hor$. We obtain that $T\gri(\Hor)\subset \Hor$.

	Finally, take $(u,v)\in (\Hor_{g}\times\Hor_{k})\cap T\grG^{(2)}$. Consider a curve $(\gamma,\eta)\colon I\to\grH^{(2)}$ with initial speed $(T\pi(u),T\pi(v))$. Denote the lifts of $\gamma$ and $\eta$ by $\tilde{\gamma}(t): = \tau_{\gamma}^{t}(g)$ and $\tilde{\eta}(t): = \tau_{\eta}^{t}(k)$ respectively. These paths are defined on $[0,\varepsilon]$, for some $\varepsilon>0$. After reparametrising all path via diffeomorphism $I\diffto [0,\varepsilon]$ with initial speed $1$, we may assume that $\tilde{\gamma},\tilde{\eta}\in C^{\infty}(I,\grG)^{\Hor}$. As we have seen above, $\grs(\tilde{\gamma})$ and $\grt(\tilde{\eta})$ are $\Hor_0$-horizontal paths, they cover the same path:
	\[
		\pi_0\circ \grs(\tilde{\gamma}) = \grs(\pi\circ \tilde{\gamma}) = \grs(\gamma) = \grt(\eta) = \grt(\pi\circ \tilde{\eta}) = \pi_0\circ \grt(\tilde{\eta}),
	\]
	and have the same initial value: $\grs(\tilde{\gamma})(0) = \grs(g) = \grt(k) = \grt(\tilde{\eta})(0)$. Hence $\grs(\tilde{\gamma}) = \grt(\tilde{\eta})$, so the paths $\tilde{\gamma}$ and $\tilde{\eta}$ are composable. Per assumption \eqref{item:current groupoid cond MC:current groupoid}, we have $\grm(\tilde{\gamma},\tilde{\eta})\in C^{\infty}(I,\grG)^{\Hor}$. Calculating the initial speed, we obtain that $T\grm(v,w)\in \Hor$.

	Clearly, the maps $\pi\colon C^{\infty}(I,\grG)\to C^{\infty}(I,\grH)$ and $\mathrm{ev}_0\colon C^{\infty}(I,\grG)\to \grG$ are groupoid morphisms, and clearly they induce a morphism to the pullback groupoid. Restricting this morphism to the subgroupoid of horizontal paths, we obtain the map from the statement. It is injective, because a horizontal path is uniquely determined by the base path and its initial value. The map is surjective, precisely when every path $\gamma\colon I\to \grH$ has a lift $\tilde{\gamma}\colon I\to \grG$ to any $g\in \pi^{-1}(\gamma(0))$. This is precisely equivalent to completeness of the connection, and clearly $\tilde{\gamma} = \tau_{\gamma}(g)$.
\end{proof}

\subsection{Infinitesimal pictures}
In the following subsections, we briefly discuss the possible counterparts of the notion of multiplicative connection in the infinitesimal setting. We will not go into more detail in this paper.

\subsubsection{Lie algebroid submersions}
Given a Lie algebroid map $\Pi\colon A\to B$, which is a surjective submersion, the notion of an \emph{infinitesimal multiplicative Ehresmann connection (IMEC)} is defined as a \VB-algebroid splitting $TA = \ker\Pi\oplus E_{\Hor}$, where $E_{\Hor}\Rightarrow \Hor_0$ is a \VB-subalgebroid of $TA\Rightarrow TM$. Given a MEC $\Hor\tto \Hor_0$ for a Lie groupoid surjective submersion $\pi\colon \grG \to \grH$, the Lie functor in the {\VB} setting \cite{BuCadH2016} yields an IMEC $E_{\Hor} = \mathrm{Lie}(\Hor)$ for the induced Lie algebroid surjective submersion $\Pi\colon A\to B$, where $A = \mathrm{Lie}(\grG)$ and $B = \mathrm{Lie}(\grH)$. Conversely, if $\grG$ is source-simply connected, then any IMEC $E_{\Hor}$ for $\Pi$ integrates to a MEC for $\pi$. This result follows from \cite{BuCadH2016}*{Section 4.3}. In the case of extensions, IMECs were studied in \cite{Fernandes2023}.

\subsubsection{Multiplicative foliations}\label{sssec:mult_fol}
Instead of a Lie groupoid surjective submersion, we could consider a multiplicative foliation $\mathcal{D}$ on a Lie groupoid $\grG\tto M$, which replaces $\ker T\pi$. The corresponding notion of a MEC is simply a \VB-groupoid complement $T\grG  = \mathcal{D}\oplus \Hor$. For extensions, this point of view was very useful in \cite{Fernandes2023} in showing that the existence of MECs is Morita invariant.

As shown in \cite{Jotz2012}, a multiplicative (simple) foliation $\mathcal{D}\subset T\grG$ does not necessarily induce a (Lie) groupoid structure on $\grG/\mathcal{D}$. Indeed assume that $\mathcal{D}$ and $\mathcal{D}_0$ are both simple, so that quotient maps $\pi\colon\grG\to\grG/\mathcal{D}$ and $\pi_0\colon M\to M/\mathcal{D}_0$ are surjective submersions. While the source, target, unit, and inverse map are well-defined on the quotients, the multiplication may fail to descend. Under the additional assumptions:
\begin{enumerate}
	\item the map
	      \[
		      \grG\to M\fp{\pi_0}{\overline{\grs}}\grG/\mathcal{D},\qquad g\mapsto (\grs(g),\pi(g))
	      \]
	      is surjective, and

	\item if $(g,h)\in \grG^{(2)}$ and $h\sim 1_{\grs(h)}$ in $\grG/\mathcal{D}$ then $hg\sim g$ in $\grG/\mathcal{D}$,
\end{enumerate}
it was shown in \cite{Jotz2012} that $\grG/\mathcal{D}\tto M/\mathcal{D}_0$ inherits a groupoid structure. In this case, $\pi$ is a Lie groupoid fibration (see Theorem~\ref{thm:weaking_fibration}).

\subsubsection{Infinitesimal ideal systems}\label{inf_ideal_systems}
To combine the settings discussed above, consider a Lie algebroid $A\Rightarrow M$ together with \emph{infinitesimally multiplicative foliation} (IMF), i.e.\ an involutive subbundle $E_{\mathcal{D}}\subset TA$ which is a \VB-subalgebroid $E_{\mathcal{D}}\Rightarrow \mathcal{D}_0$. These objects were studied in \cite{JoOr2014}, where they are also described in terms of the more explicit data of an \emph{infinitesimal ideal system}. Infinitesimally multiplicative foliations generalise the previous two settings:
\begin{itemize}
	\item if $\Pi\colon A\to B$ is a Lie algebroid submersion, then $E_{\mathcal{D}}: = \ker T\Pi$ is an IMF;

	\item if $A = \mathrm{Lie}(\grG)$, and $\mathcal{D}\subset T\grG$ is a multiplicative foliation, then $E_{\mathcal{D}}: = \mathrm{Lie}(\mathcal{D})$ is an IMF. Moreover, if $\grG$ is source-simply connected, then any IMF arises from a unique multiplicative foliation on $\grG$ \cite{JoOr2014}*{Theorem 4.8}.
\end{itemize}

An IMEC $E_{\Hor}$ in this setting is a vector bundle complement $TA = E_{\mathcal{D}}\oplus E_{\Hor}$ which is a \VB-subalgebroid $E_{\Hor}\Rightarrow \Hor_0$.

\begin{remark}
	At the infinitesimal level, there also arise notions of Lie algebroid fibrations and uniform Lie algebroid fibrations, see \cite{Mackenzie2005}*{Section~4.4}. In conjunction with Appendix~\ref{app: Lgrpd fib}, we notice, however, that at the infinitesimal level, both a Lie groupoid surjective submersion and Lie groupoid fibration define a Lie algebroid fibration. Somewhat analogously, uniform Lie groupoid fibrations descend to uniform Lie algebroid fibrations.
\end{remark}

\section{Existence}\label{sec:existence}
In this section, we discuss the problem of the existence of multiplicative Ehresmann connections in various contexts. Surjective submersions admit Ehresmann connections simply because any vector subbundle admits a complement. However, in the multiplicative setting, the existence problem is much more subtle, as \VB-subgroupoids do not always admit multiplicative complements.

\subsection{Extensions}
Consider the special case of an extension, i.e.\ a Lie groupoid surjective submersion $\pi\colon \grG\to \grH$ covering $\pi_0 = \mathrm{id}_{M}\colon M\to M$. In this case, the existence problem was well-researched already in \cite{Laurent-Gengoux2009}, where a cohomological theory was developed, and an obstruction class was identified. In \cites{Fernandes2023} and \cite{Grad2025} the problem was studied in the more general framework of \emph{bundles of ideals} on the $\grG$, which correspond to the notion of multiplicative foliations $\mathcal{D}\subset T\grG$ from Section \ref{sssec:mult_fol} with trivial base $\mathcal{D}_0 = 0_{M}$, and also in the setting of \emph{infinitesimal bundles of ideals} in Lie algebroids, which correspond to Section \ref{inf_ideal_systems} for $\mathcal{D}_0 = 0_{M}$. In particular, \cite{Grad2025} introduces a cohomological obstruction theory, both for the global level (Section 5.1.1), as well as for the infinitesimal level (Section 5.1.2). Existence was proven in \cite{Fernandes2023}*{Theorem~4.2} a bundle of ideals on a proper Lie groupoid; we recall the result in the particular case of extensions.

\begin{theorem}[\cite{Fernandes2023}]\label{thm:Fernandes}
	Any Lie groupoid extension $\pi\colon\grG\to \grH$, defined on a proper Lie groupoid $\grG$, admits a multiplicative Ehresmann connection.
\end{theorem}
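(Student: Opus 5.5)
The plan is the standard averaging argument for proper groupoids, adapted to \VB-groupoids. By Lemma \ref{lem: splitting} it suffices to produce a multiplicative vertical projection, i.e.\ a \VB-groupoid morphism $\mathrm{vert}\colon T\grG\to \ker T\pi$ with $\mathrm{vert}\circ\iota=\mathrm{id}$. Since $\pi_0=\mathrm{id}_M$, the side bundle of $\ker T\pi$ is $\ker T\pi_0=0_M$. So $\ker T\pi$ is a \VB-groupoid with trivial core-side structure; concretely $\ker T\pi_g$ is identified, via right translation, with the Lie algebra $\mathfrak{k}_{\grt(g)}$ of the isotropy of the kernel $\grK=\ker\pi$ (a bundle of Lie groups over $M$). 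Multiplicativity of $\mathrm{vert}$ then becomes an explicit cocycle-type identity: writing $\omega(v)=T R_{g^{-1}}\mathrm{vert}(v)\in\mathfrak{k}_{\grt(g)}$ for $v\in T_g\grG$, we need
\[
\omega(T\grm(u,v))=\omega(u)+\mathrm{Ad}_g\,\omega(v),
\]
together with $\omega|_{\ker T\pi}$ equal to the right Maurer--Cartan identification. In other words, we need a $\mathfrak{k}$-valued multiplicative $1$-form restricting to the identity on the vertical bundle.

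Step one is to start from an arbitrary vertical projection $\omega_0$, i.e.\ an arbitrary Ehresmann connection for $\pi$, which exists since vector subbundles have complements. We measure its failure to be multiplicative by the differential-cochain
\[
(\delta\omega_0)(u,v)=\omega_0(u)+\mathrm{Ad}_g\omega_0(v)-\omega_0(T\grm(u,v)),
\]
which vanishes whenever $u$ or $v$ is vertical. This places $\delta\omega_0$ in a complex of $\mathfrak{k}$-valued forms on the nerve, essentially the Bott--Shulman--Stasheff/Crainic--Abad complex of the \VB-groupoid $T\grG$ twisted by the representation on $\mathfrak{k}$, restricted to cochains vanishing on verticals. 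This is exactly the obstruction setting of \cite{Laurent-Gengoux2009}. Because $\delta\omega_0$ is a cocycle, it suffices to show that this cohomology group vanishes in degree $2$.

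Step two handles that vanishing using properness. Choose a Haar system on $\grG$ with a cutoff function $c$, i.e.\ $\int_{\grt^{-1}(x)} c(\grs(h))\,dh=1$, as in Crainic's vanishing theorem for differentiable cohomology of proper groupoids. Then define the homotopy-averaged correction
\[
\omega(v)=\int_{\grt^{-1}(\grt(g))} c(\grs(h))\Bigl(\omega_0\text{ transported by } h\Bigr)\,dh .
\]
Concretely, for $v\in T_g\grG$ one writes $v=T\grm(\tilde 0_h, \,\cdot\,)$-style decompositions $v = T\grm(w, T\grm(0_{h^{-1}},\cdot))$ and sets $\omega(v)=\int c\,[\omega_0(\cdots)+\mathrm{Ad}\,\omega_0(\cdots)-\cdots]$. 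The point is that this is the standard contracting homotopy $k$ with $\delta k+k\delta=\mathrm{id}$ in positive degrees, applied as $\omega=\omega_0-k(\delta\omega_0)$. Linearity in $v$ and the normalization on vertical vectors are preserved because $\delta\omega_0$ vanishes on verticals and the averaging is fibrewise linear. Smoothness follows from the properness of the Haar-system integrals.

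The main obstacle is making the averaging well defined on tangent vectors. This means choosing, for $v\in T_g\grG$ and $h\in\grt^{-1}(\grt(g))$, a canonical decomposition of $v$ as a product in $T\grG$, which is not unique: the composable pair $(h,h^{-1}g)$ requires a splitting of $T\grs$ or $T\grt$. I would handle this as in \cite{Fernandes2023}. There the authors average the complement $\Hor$ directly using the \VB-groupoid structure and the right-invariant representation on the core, and independence of choices follows because the ambiguity lies in $\ker T\pi$, on which $\delta\omega_0$ vanishes. Alternatively, one can first reduce via Morita invariance to the case where $M$ is a single orbit-neighbourhood with a proper isotropy group and average with Haar measure there.
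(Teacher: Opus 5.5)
Your first step is sound. By Lemma~\ref{lem: splitting}, a MEC for an extension is a $\grt^*\mathfrak{k}$-valued $1$-form $\omega$ with $\omega(T\grm(u,v))=\omega(u)+\mathrm{Ad}_g\,\omega(v)$ that restricts to the right Maurer--Cartan form on $\ker T\pi$. It therefore suffices to write $\delta\omega_0=\delta\alpha$ with $\alpha$ vanishing on $\ker T\pi$. That is, you must kill $[\delta\omega_0]$ in degree $2$ of the complex of fibrewise-linear $\mathfrak{k}$-valued cochains on $(T\grG)^{(p)}$ that vanish on tuples of vertical vectors.

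However, $\delta\omega_0$ vanishes only when \emph{both} entries are vertical, not when one is. Take $G=SU(2)\times SU(2)\supset K=SU(2)\times 1$, and let $\omega_0$ be the right-invariant form of a projection $p_0\colon\mathfrak{g}\to\mathfrak{k}$ that is not $\mathrm{Ad}$-equivariant. Then $\delta\omega_0(0_g,TR_hY)=\mathrm{Ad}_g\,p_0(Y)-p_0(\mathrm{Ad}_gY)\neq 0$, although $0_g$ is vertical.

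The genuine gap is your second step, which is where all the content lies; the paper itself gives no proof and only quotes \cite{Fernandes2023}.

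\emph{No homotopy in all positive degrees.} Crainic's contracting homotopy is for differentiable cochains of a proper groupoid with values in a representation. Your cochains are linear cochains on the \VB-groupoid $T\grG\tto TM$, which is not proper. For them no $k$ with $\delta k+k\delta=\mathrm{id}$ in all positive degrees can exist. For the extension $T^2\to T^2/(S^1\times 1)$, degree-$1$ cohomology of your relative complex is $\mathrm{Hom}(\mathfrak{g}/\mathfrak{k},\mathfrak{k})\cong\mathbb{R}$: cocycles are the invariant forms killing $\mathfrak{k}$, and there are no coboundaries since $M$ is a point.

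\emph{The lift ambiguity is not where you place it.} To average, you must lift $h$ to a tangent vector composable with the given one. The ambiguity of that lift is $\ker T_h\grt$, not $\ker T\pi$. By the computation above, $\delta\omega_0$ does not vanish on it, so your independence-of-choices argument fails.

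\emph{Even compact groups need more.} Here $0_h$ is a canonical lift, yet the last-variable homotopy $kc(u)=\int_G c(u,0_h)\,dh$ gives $\omega_0-k(\delta\omega_0)=\int_G\omega_0(TR_h\,\cdot\,)\,dh$. In the example this equals $\omega_0$, which is not multiplicative; a second averaging over the adjoint action is indispensable.

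\emph{What is missing.} You need a vanishing theorem in degrees $\geq 2$ for this linear cohomology. It does hold: after choosing a cleavage, the complex becomes $\grG$-cochains with values in the two-term representation up to homotopy $\mathrm{Hom}(TM,\mathfrak{k})\to\mathrm{Hom}(\mathrm{Lie}(\grH),\mathfrak{k})$, and averaging in the last variable works in total degrees $\geq 2$. But this must be proved, and deferring it to \cite{Fernandes2023} is circular.

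Your Morita alternative could be made to work. Connection forms glue along $\grG$-invariant partitions of unity. On $G\ltimes V$ with $G$ compact, fix an $\mathrm{Ad}$-invariant inner product on $\mathfrak{g}$; then $\{(TR_g\xi,X)\colon \xi\perp\mathfrak{k}_{gx}\}$ is already a MEC. As written, though, the route rests on Morita invariance of the existence of MECs, a nontrivial result you neither state precisely nor prove.
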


This result does not extend directly to general Lie groupoid fibrations. A class of counterexamples is presented in the next subsection.

\subsection{Action morphisms}
We discuss the existence of MECs for action morphisms. This case will provide examples showing that properness is not a sufficient condition for the existence of MECs.

Consider a Lie groupoid $\grH\tto N$ with an action $a:\grH\fp{\grs}{\pi_0}M\to M$ on a surjective submersion $\pi_0:M\to N$. This yields the action groupoid $\grH\ltimes M: =  \grH\fp{\grs}{\pi_0}M\tto M$, and the action morphism
\begin{equation}\label{eq:actgrpd}
	\pi: = \mathrm{pr}_{1}:\grH\ltimes M\to \grH.
\end{equation}
Up to isomorphism, any action morphism is of this type.
\begin{proposition}
	Any MEC $\Hor\tto \Hor_0$ for \eqref{eq:actgrpd} is determined by its base connection via the equality
	\begin{equation}\label{eq:MEC:action}
		\Hor = T\mathcal{H}\fp{T\grs}{T\pi_0}\Hor_0\tto \Hor_0.
	\end{equation}
	Conversely, given an Ehresmann connection $\Hor_0\subset TM$ for $\pi_0$, the above formula defines a MEC precisely when it is $T\mathcal{H}$-invariant:
	\[
		Ta(T\mathcal{H}\fp{T\grs}{T\pi_0}\Hor_0)\subset \Hor_0.
	\]
	In particular, such a connection satisfies $\rho(\mathrm{Lie}(\grH)\fp{}{N}M))\subset \Hor_0$, where $\rho$ is denotes the infinitesimal action of $\mathrm{Lie}(\grH)$ on $\pi_0\colon M\to N$.
\end{proposition}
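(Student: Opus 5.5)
The plan is to exploit that, for the action morphism $\pi=\mathrm{pr}_1\colon\grH\ltimes M\to\grH$, the map $\pi\times\grs$ from \eqref{eq: fibration} is a diffeomorphism. Consequently its tangent map
\[
	T\pi\times T\grs\colon T(\grH\ltimes M)\to T\grH\fp{T\grs}{T\pi_0}TM
\]
is also an isomorphism, and so identifies $T(\grH\ltimes M)$ with the tangent action groupoid $T\grH\ltimes TM$. Under this identification $\ker T\pi=0\times\ker T\pi_0$.

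\textbf{Determination by the base.} Let $\Hor\tto\Hor_0$ be a MEC. Since $\Hor$ is a subgroupoid, $T\grs(\Hor)\subset\Hor_0$, so $\Hor\subset T\grH\fp{T\grs}{T\pi_0}\Hor_0$. For the reverse inclusion we compare ranks at $(h,x)$. Here $\operatorname{rank}\Hor=\dim\grH+\dim M-\operatorname{rank}\ker T\pi=\dim\grH$. On the other side, the fibre of $T\grH\fp{T\grs}{T\pi_0}\Hor_0$ over $(h,x)$ is $\{(u,w)\in T_h\grH\times(\Hor_0)_x: T\grs(u)=T\pi_0(w)\}$. Because $T\pi_0|_{\Hor_0}$ is an isomorphism onto $T_{\pi_0(x)}N$, $w$ is determined by $u$, so this fibre has dimension $\dim\grH$. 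Both are vector subbundles of equal rank with one contained in the other, so they coincide, which gives \eqref{eq:MEC:action}.

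\textbf{Converse.} Start from an Ehresmann connection $\Hor_0$ and define $\Hor$ by \eqref{eq:MEC:action}. The rank count above together with $\Hor\cap(0\times\ker T\pi_0)=0$ shows $T\grG=\Hor\oplus\ker T\pi$, so $\Hor$ is an Ehresmann connection. By Proposition~\ref{prp: cond VBsubgrpd} it is a MEC precisely when it is a set-theoretic subgroupoid of $T\grH\ltimes TM$ with base $\Hor_0$, and we check the structure maps one at a time.
\begin{itemize}
	\item Source, units and multiplication are automatic: the source is $(u,w)\mapsto w\in\Hor_0$, units are $(T\gru(T\pi_0 w),w)\in\Hor$, and the product $(u,w)(u',w')=(T\grm(u,u'),w')$ lies in $\Hor$.
	\item The target is $(u,w)\mapsto Ta(u,w)$, so $T\grt(\Hor)\subset\Hor_0$ is exactly the $T\grH$-invariance condition.
	\item Inversion is $(u,w)\mapsto(T\gri(u),Ta(u,w))$, which lies in $\Hor$ iff $Ta(u,w)\in\Hor_0$, again the same condition.
\end{itemize}
Hence $\Hor$ is a MEC iff $\Hor_0$ is invariant. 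The same computation shows that for any MEC the invariance condition holds, since $\Hor$ must be closed under the target map.

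\textbf{Infinitesimal consequence.} Apply the invariance condition to $u\in\ker T\grs$ at the unit $1_{\pi_0(x)}$ with $w=0_x$; this pair lies in $\Hor$. Its image is $Ta(u,0_x)=\rho(u)_x$, by definition of the infinitesimal action. Therefore $\rho(\mathrm{Lie}(\grH)\fp{}{N}M)\subset\Hor_0$.

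The main obstacle is getting the identification $T(\grH\ltimes M)\cong T\grH\ltimes TM$ right, with $T\grt=Ta$ and the correct inverse formula. Once that is in place, every remaining step is a rank count or a direct check.
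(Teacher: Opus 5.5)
Your proposal is correct and follows essentially the same route as the paper. You identify $T(\grH\ltimes M)$ with the tangent action groupoid, obtain $\Hor\subset T\grH\fp{T\grs}{T\pi_0}\Hor_0$ with equality by a rank count, and note that for the converse only closure under target and inversion is non-automatic, which is exactly $T\grH$-invariance; you then read off $\rho(\mathrm{Lie}(\grH)\fp{}{N}M)=Ta(\mathrm{Lie}(\grH)\fp{T\grs}{T\pi_0}0_M)\subset\Hor_0$, spelling out the structure maps the paper leaves implicit.

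One small slip: since $\dim(\grH\ltimes M)=\dim\grH+\dim M-\dim N$ and $\operatorname{rank}\ker T\pi=\dim M-\dim N$, your intermediate expression for $\operatorname{rank}\Hor$ is off by $\dim N$. The value $\dim\grH$ is nonetheless right, since $T\pi|_{\Hor}$ is a fibrewise isomorphism onto $T\grH$.
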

\begin{proof}
	The tangent of the action groupoid is also a tangent groupoid:
	\[
		T(\grH\ltimes M) = T\grH\ltimes TM.
	\]
	Therefore, if $\Hor\tto \Hor_0$ is a MEC, then $\Hor\subset T\grH\fp{T\grs}{T\pi_0}\Hor_0$. Since the two bundles have the same rank, \eqref{eq:MEC:action} follows.

	Conversely, \eqref{eq:MEC:action} defines a subgroupoid precisely when $\Hor_0$ is $T\grH$-invariant.

	The last inclusion follows from $T\grH$-invariance
	\[
		\rho(\mathrm{Lie}(\grH)\times_{N}M) = Ta(\mathrm{Lie}(\grH)\fp{T\grs}{T\pi_0}0_{M})\subset \Hor_0.\qedhere
	\]
\end{proof}

For the particular case of groups, this implies the following.

\begin{corollary}\label{cor:action}
	Let $H$ be a connected Lie group acting on a manifold $M$. The Lie groupoid fibration
	\[
		\pi: = \mathrm{pr}_{1}\colon H\ltimes M\to H,
	\]
	admits a multiplicative Ehresmann connection only if the action is trivial.
\end{corollary}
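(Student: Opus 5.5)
Specialize the preceding proposition to $\grH = H\tto \{*\}$, a connected Lie group, with $N=\{*\}$ and $\pi_0\colon M\to \{*\}$. The only Ehresmann connection for $\pi_0$ is the one whose complement to $\ker T\pi_0 = TM$ is zero, so $\Hor_0 = 0_M$. By the proposition, a MEC, if it exists, must be
\[
\Hor = TH\fp{T\grs}{T\pi_0}0_M = TH\times 0_M \subset TH\times TM = T(H\ltimes M),
\]
and it is a MEC precisely when $\Hor_0=0_M$ is $TH$-invariant, i.e.\ $Ta(TH\times 0_M)\subset 0_M$.

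First use the infinitesimal consequence stated in the proposition: $\rho(\mathfrak{h}\times M)\subset \Hor_0 = 0_M$, where $\mathfrak{h}=\mathrm{Lie}(H)$. Concretely, $Ta(X,0_x)=\frac{d}{dt}\big|_{t=0}\exp(tX)\cdot x = \rho(X)_x$, so every fundamental vector field of the action vanishes identically. More generally, invariance says $Ta(v,0_x)=0$ for every $v\in T_hH$. This means the map $h\mapsto h\cdot x$ has zero differential everywhere, for each fixed $x\in M$.

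Next, pass from infinitesimal to global triviality using connectedness. For fixed $x$, the orbit map $H\to M$, $h\mapsto h\cdot x$, has vanishing derivative on the connected manifold $H$, so it is constant and equal to $e\cdot x = x$. Equivalently, $\exp(tX)\cdot x$ is the flow of the zero vector field, hence constant. Since a connected Lie group is generated by $\exp(\mathfrak{h})$, the action is trivial.

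No step is a real obstacle. The only point needing care is identifying $\Hor_0=0_M$ as the unique connection for a map to a point, and reading off $Ta$ on $TH\times 0_M$ as the derivative of the orbit maps. As a sanity remark, for the trivial action $H\ltimes M = H\times M$, and $TH\times 0_M$ is indeed a \VB-subgroupoid. So the condition is also sufficient, though the statement only asks for necessity.
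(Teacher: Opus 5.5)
Your proof is correct and follows the paper's argument: $\Hor_0=0_M$ is forced for the map to a point, and the preceding proposition forces $TH$-invariance of $0_M$, hence a trivial infinitesimal action; connectedness of $H$ then makes the action trivial. Your remark that invariance already gives $Ta(v,0_x)=0$ for every $v\in T_hH$, so each orbit map has zero differential on the connected manifold $H$ and is therefore constant, is a slightly more direct finish than the paper's step through the infinitesimal action.
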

\begin{proof}
	Note that $\Hor_0 = 0_{M}$, because it is a connection on $\pi_0\colon M\to \{*\}$. The proposition implies that the infinitesimal action of $\mathrm{Lie}(H)$ on $M$ is trivial. Since $H$ is connected, so is the action of $H$ on $M$.
\end{proof}

We conclude that any non-trivial action of a compact, connected Lie group on a compact manifold yields an example of a compact Lie groupoid fibration that does not admit a MEC.

\subsection{Morita fibrations}

For this class of maps, we have a general existence result.
\begin{proposition}\label{prp:Morita_fib}
	Any Morita fibration $\pi\colon \grG\to \grH$ admits a MEC. More precisely, for any Ehresmann connection $\Hor_0$ for the base map $\pi_0:M\to N$, there is a unique MEC $\Hor$ for $\pi$ with base $\Hor_0$.
\end{proposition}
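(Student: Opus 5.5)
The plan is to reduce to the pullback groupoid, using that a Morita fibration is, by definition, an isomorphism $\pi^{*}\colon\grG\diffto\pi_0^{*}\grH = M\fp{\pi_0}{\grt}\grH\fp{\grs}{\pi_0}M$ commuting with the projections to $\grH$. Multiplicative Ehresmann connections are transported along groupoid isomorphisms over $\grH$, so it suffices to treat $\pi = \mathrm{pr}_2\colon \pi_0^{*}\grH\to\grH$. Its tangent groupoid is again a pullback:
\[
	T(\pi_0^{*}\grH) = TM\fp{T\pi_0}{T\grt}T\grH\fp{T\grs}{T\pi_0}TM\tto TM,
\]
with $T\pi$ being the middle projection. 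The kernel $\ker T\pi$ at $(y,h,x)$ is $\ker T_y\pi_0\times 0\times\ker T_x\pi_0$.

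For existence, fix an Ehresmann connection $\Hor_0$ for $\pi_0$ and define
\[
	\Hor := \Hor_0\fp{T\pi_0}{T\grt}T\grH\fp{T\grs}{T\pi_0}\Hor_0 = T(\pi_0^{*}\grH)\cap(\Hor_0\times T\grH\times\Hor_0).
\]
Since $T\pi_0|_{\Hor_0}$ is fibrewise an isomorphism onto $TN$, the fibre of $\Hor$ over $(y,h,x)$ is the graph of $v\mapsto(\hor_0(T\grt v),v,\hor_0(T\grs v))$ over $T_h\grH$. Hence $\Hor$ is a smooth vector subbundle complementary to $\ker T\pi$. It is closed under the pointwise pair-groupoid-type operations: the multiplication is $(z,u,y)(y,v,x)=(z,uv,x)$ with the middle entries in $\Hor_0$, the inverse swaps the outer components, and the units are $(x,T\gru(T\pi_0 x),x)$ for $x\in\Hor_0$. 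So $\Hor\tto\Hor_0$ is a set-theoretic subgroupoid of $T\grG$, and Proposition~\ref{prp: cond VBsubgrpd} upgrades it to a \VB-subgroupoid, i.e.\ a MEC with base $\Hor_0$. Alternatively, one can write down the horizontal lift $\hor(v,(y,h,x)) = (\hor_0(T\grt v,y),v,\hor_0(T\grs v,x))$ and check directly that it is a \VB-groupoid morphism.

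For uniqueness, let $\Hor'\tto\Hor_0$ be any MEC with base $\Hor_0$. Since $T\grs$ and $T\grt$ map $\Hor'$ into $\Hor_0$, and under the identification above these are the outer projections, we get $\Hor'\subset\Hor$. Both are complements of $\ker T\pi$, so they have equal rank, and therefore $\Hor'=\Hor$.

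The main obstacle I expect is the bookkeeping of the isomorphism $T\grG\cong T(\pi_0^{*}\grH)$ as \VB-groupoids, that is, identifying the tangent of a fibre-product groupoid with the fibre product of tangents. This requires the transversality that makes $\pi_0^{*}\grH$ a manifold, which holds since $\pi_0$ is a submersion. Once that is in place, everything else is fibrewise linear algebra.
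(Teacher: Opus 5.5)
Your proposal is correct and follows essentially the same route as the paper: reduce to $\mathrm{pr}\colon\pi_0^{*}\grH\to\grH$, take $\Hor=\Hor_0\fp{T\pi_0}{T\grt}T\grH\fp{T\grs}{T\pi_0}\Hor_0$, and get uniqueness because any MEC with base $\Hor_0$ must lie inside it. The differences are cosmetic. The paper obtains the \VB-subgroupoid structure by recognising $\Hor$ as the pullback \VB-groupoid of $T\grH$ along the fibrewise-linear submersion $T\pi_0|_{\Hor_0}\colon\Hor_0\to TN$, whereas you check subbundle and subgroupoid separately and invoke Proposition~\ref{prp: cond VBsubgrpd}; you also make explicit the rank count that completes the uniqueness argument, which the paper leaves implicit.
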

\begin{proof}
	The base map $\pi_0:M\to N$ is a surjective submersion, and we may assume that $\grG = \pi_0^{*}\grH$ and $\pi = \mathrm{pr}\colon \pi_0^{*}\grH\to \grH$, $(x,h,y)\mapsto h$.

	Let $\Hor_0\subset TM$ be an Ehresmann connection for $\pi_0$. Note that
	\[
		\overline{\pi}_0 : =  T\pi_0|_{\Hor_0}\colon \Hor_0\to TN
	\]
	is a vector bundle map and a surjective submersion. Pulling back $T\grH\tto TN$ along this map, we obtain the pullback \VB-groupoid
	\[
		\Hor : =  \overline{\pi}_0^{*}T\grH = \Hor_0\fp{\overline{\pi}_0}{T\grt}T\grH\fp{T\grs}{\overline{\pi}_0}\Hor_0\tto \Hor_0,
	\]
	which is canonically a \VB -subgroupoid of $T(\pi_0^{*}\grH)$ and defines a MEC for $\mathrm{pr}$. Uniqueness follows because any MEC with base $\Hor_0$ would be a subset of $\Hor$.
\end{proof}

\subsection{Uniform Lie groupoid fibrations}
For uniform Lie groupoid fibrations (recall Definition \ref{def:grp_fibr}), the existence problem can be reduced to the case of extensions.

\begin{proposition}\label{prp: ex MC conn uniform Lgrpd fib}
	Let $\pi\colon\grG\to\grH$ be a uniform Lie groupoid fibration. 
	If the induced map $\pi^{*}\colon \grG \to \pi_0^{*}\grH$ admits a MEC, then so does $\pi$.
\end{proposition}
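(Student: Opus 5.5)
The plan is to factor $\pi$ as a composition
\[
	\grG\xrightarrow{\ \pi^{*}\ }\pi_0^{*}\grH\xrightarrow{\ \mathrm{pr}\ }\grH,
\]
where $\pi^{*}$ is the extension from the statement and $\mathrm{pr}(x,h,y)=h$ is a Morita fibration. Then I build a MEC for $\pi$ by composing a MEC for $\pi^{*}$ with a MEC for $\mathrm{pr}$. Since $\pi$ is a uniform fibration, $\pi^{*}$ is a surjective submersion of Lie groupoids covering $\mathrm{id}_M$, so it is an extension. Let $\Hor'\subset T\grG$ be a MEC for $\pi^{*}$, which exists by hypothesis. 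Its base is $\Hor'_0=0_M$, the unique Ehresmann connection for $\mathrm{id}_M$.

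Next, choose any Ehresmann connection $\Hor_0\subset TM$ for $\pi_0$. By Proposition~\ref{prp:Morita_fib}, there is a MEC $\Hor''=\overline{\pi}_0^{*}T\grH\subset T(\pi_0^{*}\grH)$ for $\mathrm{pr}$ with base $\Hor_0$. Define
\[
	\Hor:=\Hor'\cap (T\pi^{*})^{-1}(\Hor'').
\]
Equivalently, $\Hor$ is the image of $\Hor''\fp{}{\pi_0^{*}\grH}\grG$ under the horizontal lift $\hor'$ of $\Hor'$. In terms of horizontal lifts, set
\[
	\hor(v,g):=\hor'\bigl(\hor''(v,\pi^{*}(g)),g\bigr).
\]
Here $\hor'':T\grH\fp{}{\grH}\pi_0^{*}\grH\to T(\pi_0^{*}\grH)$ and $\hor':T(\pi_0^{*}\grH)\fp{}{\pi_0^{*}\grH}\grG\to T\grG$.

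I then check the three required properties in order.
\begin{enumerate}
	\item $\hor$ is a section: $T\pi\circ\hor=T\mathrm{pr}\circ T\pi^{*}\circ\hor'\circ\ldots=\mathrm{id}$, since each lift is a section of its projection.
	\item $\hor$ is a \VB-groupoid morphism. The map $(v,g)\mapsto(\hor''(v,\pi^{*}g),g)$ from $T\grH\fp{}{\grH}\grG$ to $T(\pi_0^{*}\grH)\fp{}{\pi_0^{*}\grH}\grG$ is a \VB-groupoid morphism, because fibre products of \VB-groupoid morphisms over groupoid morphisms are \VB-groupoid morphisms and $\hor''$ is one. Composing with $\hor'$ gives a \VB-groupoid morphism.
	\item The base of $\hor$ is computed as $\hor'_0\circ(\hor''_0,\mathrm{id})$. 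The map $\hor'_0$ is the identity-type lift for the extension, so the base is the lift defined by $\Hor_0$.
\end{enumerate}
By Lemma~\ref{lem: splitting}, $\Hor=\mathrm{im}\,\hor$ is then a MEC with base $\Hor_0$.

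The main obstacle I expect is the bookkeeping in step 2. One has to see that the relevant fibre products are genuinely Lie groupoids and \VB-groupoids; for instance, $T\grH\fp{}{\grH}\grG$ and $T(\pi_0^{*}\grH)\fp{}{\pi_0^{*}\grH}\grG$ are fibre products along submersions, so they are fine. One also has to confirm that the base maps match, i.e.\ $TN\fp{}{N}M\to TM\fp{}{M}M=TM\to TM$. A cleaner way I would try first is the subbundle description: show that $\Hor=\Hor'\cap(T\pi^{*})^{-1}(\Hor'')$ is a set-theoretic subgroupoid. It is the intersection of two subgroupoids, since the preimage of a subgroupoid under the groupoid morphism $T\pi^{*}$ is a subgroupoid. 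It is also a vector subbundle complementary to $\ker T\pi$, by a rank count: $T\pi^{*}$ restricted to $\Hor'$ is an isomorphism onto the pullback of $T(\pi_0^{*}\grH)$. Proposition~\ref{prp: cond VBsubgrpd} then yields the \VB-subgroupoid property, with base $0_M$-compatible data giving $\Hor_0$.
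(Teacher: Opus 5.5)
Your proposal is correct and is essentially the paper's proof: the paper factors $\pi=\mathrm{pr}\circ\pi^{*}$, takes the MEC for the Morita fibration $\mathrm{pr}$ from Proposition~\ref{prp:Morita_fib}, and uses a lemma showing that $\Hor'\cap(T\pi^{*})^{-1}(\Hor'')$ is a MEC for the composite, with multiplicativity coming from $T\pi^{*}$ being a groupoid map. One slip to fix: the base of a MEC for the extension $\pi^{*}$ is $\Hor'_0=TM$, not $0_M$, because the unique Ehresmann connection for $\mathrm{id}_M$ is all of $TM$ (as $\ker T\,\mathrm{id}_M=0$); this is what your identity lift $\hor'_0$ actually encodes, and it gives $\Hor$ the base $TM\cap\Hor_0=\Hor_0$.
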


\begin{proof}
	We can factor $\pi = \mathrm{pr}\circ\pi^{*}$, where $\mathrm{pr}:\pi_0^{*}\grH\to \grH$ is a Morita fibration. By Proposition \ref{prp:Morita_fib}, $\mathrm{pr}$ admits a MEC. The result follows from the lemma below.
\end{proof}

\begin{lemma}
	If the Lie groupoid surjective submersions $\pi^1\colon \grG^1\to \grG^2$ and $\pi^2\colon \grG^2\to \grG^{3}$ admit MECs, then so does their composition $\pi^2\circ\pi^1\colon \grG^1\to \grG^{3}$.
\end{lemma}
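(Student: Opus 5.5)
The plan is to compose the horizontal lifts. Write $\pi^1_0\colon M^1\to M^2$ and $\pi^2_0\colon M^2\to M^3$ for the base maps. Let $\hor^1\colon T\grG^2\fp{}{\grG^2}\grG^1\to T\grG^1$ and $\hor^2\colon T\grG^3\fp{}{\grG^3}\grG^2\to T\grG^2$ be multiplicative horizontal lifts for $\pi^1$ and $\pi^2$. I would define
\[
	\hor\colon T\grG^3\fp{}{\grG^3}\grG^1\to T\grG^1,\qquad (v,g)\mapsto \hor^1\big(\hor^2(v,\pi^1(g)),g\big).
\]
This is well defined: $\hor^2(v,\pi^1(g))$ is a vector at $\pi^1(g)$, so the pair lies in the domain of $\hor^1$. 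The condition $(T(\pi^2\circ\pi^1)\times\mathrm{pr})\circ\hor=\mathrm{id}$ follows by applying $T\pi^1$ first and then $T\pi^2$, using that each $\hor^i$ is a right inverse.

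The main step is to show that $\hor$ is a \VB-groupoid morphism. Following Lemma~\ref{lem: splitting}, it suffices to write $\hor$ as a composition of \VB-groupoid morphisms. The first map is
\[
	\Phi\colon T\grG^3\fp{}{\grG^3}\grG^1\to T\grG^3\fp{}{\grG^3}\grG^2, \qquad (v,g)\mapsto (v,\pi^1(g)).
\]
This is the identity on the $T\grG^3$-factor and $\pi^1$ on the other, so it is a groupoid morphism of fibre products. It is also fibrewise linear, covering $\pi^1\colon\grG^1\to\grG^2$. The second map is
\[
	\hor^2\times\mathrm{id}\colon (w,g)\mapsto(\hor^2(w),g),
\]
sending $\{(w,g): w\in T\grG^3\fp{}{\grG^3}\grG^2,\ \text{base point of } w=\pi^1(g)\}$ to $T\grG^2\fp{}{\grG^2}\grG^1$. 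Here I would view the domain as the fibre product $(T\grG^3\fp{}{\grG^3}\grG^2)\fp{}{\grG^2}\grG^1$, which is canonically isomorphic to $T\grG^3\fp{}{\grG^3}\grG^1$ as a \VB-groupoid. The map $\hor^2\times\mathrm{id}$ is a product of groupoid morphisms compatible with the projections to $\grG^2$, hence a morphism of fibre-product groupoids, and it is linear since $\hor^2$ is. The last map is $\hor^1$. Hence $\hor=\hor^1\circ(\hor^2\times\mathrm{id})\circ(\text{canonical iso})$ is a composition of \VB-groupoid morphisms over $\mathrm{id}_{\grG^1}$.

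The expected obstacle is the smoothness and groupoid structure of these fibre products, i.e.\ that they are genuine Lie groupoids. Transversality holds because the relevant maps $\pi^i$, $T\pi^i$ and $\mathrm{pr}$ are surjective submersions, which makes the fibre products of Lie groupoids well defined, just as for $T\grH\fp{}{\grH}\grG$ in the definition of a MEC. Once that is in place, the MEC for $\pi^2\circ\pi^1$ is $\Hor=\mathrm{im}\,\hor$. Its base is the composite Ehresmann connection $\hor^1_0\circ(\hor^2_0\times\mathrm{id})$ for $\pi^2_0\circ\pi^1_0$.

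Alternatively, one can argue with vertical projections. Take $\Hor=\{u\in\Hor^1 : T\pi^1(u)\in\Hor^2\}=(T\pi^1|_{\Hor^1})^{-1}(\Hor^2)$. This is a preimage of a \VB-subgroupoid under a fibrewise-surjective \VB-morphism, so it is a subgroupoid and a subbundle, hence a \VB-subgroupoid by Proposition~\ref{prp: cond VBsubgrpd}. A rank count then shows that it complements $\ker T(\pi^2\circ\pi^1)$.
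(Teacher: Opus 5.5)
Your proposal is correct and is essentially the paper's proof. The paper takes $\Hor:=\Hor^1\cap(T\pi^1)^{-1}(\Hor^2)$ over $\Hor_0:=\Hor^1_0\cap(T\pi^1_0)^{-1}(\Hor^2_0)$ and gets multiplicativity from $T\pi^1$ being a groupoid map, which is exactly your alternative argument; this $\Hor$ is also precisely the image of your composed lift $\hor$. Your primary route verifies the same thing in the horizontal-lift formulation of Lemma~\ref{lem: splitting}, replacing the (implicit) use of Proposition~\ref{prp: cond VBsubgrpd} by the routine fact that the iterated fibre products are \VB-groupoids.
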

\begin{proof}
	Consider MECs $\Hor^1\tto \Hor^1_0$ for $\pi^1$ and $\Hor^2\tto \Hor^2_0$ for $\pi^2$. Then
	\[
		\Hor: = \Hor^1\cap (T\pi^1)^{-1}(\Hor^2)\tto \Hor_0: = \Hor^1_0\cap (T\pi^1_0)^{-1}(\Hor^2_0)
	\]
	is a MEC for $\pi^1\circ\pi^2$. Indeed, that $\Hor$ and $\Hor_0$ are Ehresmann connections is a general fact about Ehresmann connections for compositions of submersions. Multiplicativity follows because $T\pi^1$ is a groupoid map.
\end{proof}

Proposition \ref{prp: ex MC conn uniform Lgrpd fib} together with Theorem \ref{thm:Fernandes} yield the following.

\begin{corollary}\label{cor: proper uniform Lgrpd fib}
	Any uniform Lie groupoid fibration $\pi:\grG\to \grH$, defined on a proper Lie groupoid $\grG$, admits a multiplicative Ehresmann connection.
\end{corollary}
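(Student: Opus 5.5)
The plan is to deduce the corollary from Proposition \ref{prp: ex MC conn uniform Lgrpd fib}. By that proposition, it is enough to show that the induced map $\pi^{*}\colon \grG\to\pi_0^{*}\grH$, $g\mapsto(\grt(g),\pi(g),\grs(g))$, admits a MEC. Theorem \ref{thm:Fernandes} should then supply this connection.

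First I will check that $\pi^{*}$ is a Lie groupoid extension in the sense of Definition \ref{def:grp_fibr}. It is a Lie groupoid morphism because $\grs$, $\grt$ and $\pi$ are compatible with the groupoid structures, and the pullback groupoid $\pi_0^{*}\grH$ is a Lie groupoid since $\pi_0$ is a surjective submersion. Its base map sends $x\in M$ to the unit $(x,1_{\pi_0(x)},x)$, so under the identification of the base of $\pi_0^{*}\grH$ with $M$ it is $\mathrm{id}_M$. By the definition of a uniform Lie groupoid fibration, $\pi^{*}$ is a surjective submersion. Hence $\pi^{*}$ is a Lie groupoid extension whose total groupoid $\grG$ is proper.

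Next I apply Theorem \ref{thm:Fernandes} to $\pi^{*}$; this step requires only properness of $\grG$ and gives a MEC for $\pi^{*}$. Proposition \ref{prp: ex MC conn uniform Lgrpd fib} then yields a MEC for $\pi$. Concretely, this connection is obtained by composing with the unique MEC of the Morita fibration $\mathrm{pr}\colon\pi_0^{*}\grH\to\grH$ over an arbitrary base connection, as in Proposition \ref{prp:Morita_fib} and the composition lemma.

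I do not expect a real obstacle here. The only point to verify is that $\pi^{*}$ is an extension in the strict sense, i.e. that its base map is the identity once the base of $\pi_0^{*}\grH$ is identified with $M$. This identification is immediate from the description of the units of the pullback groupoid.
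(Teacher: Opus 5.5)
Your proposal is correct and matches the paper's argument: the paper obtains the corollary by applying Theorem~\ref{thm:Fernandes} to the extension $\pi^{*}\colon\grG\to\pi_0^{*}\grH$ and then invoking Proposition~\ref{prp: ex MC conn uniform Lgrpd fib}. Your explicit check that $\pi^{*}$ covers $\mathrm{id}_M$ is a detail the paper leaves implicit.
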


\section{Completeness and the connection on the kernel}\label{sec:compl}
We turn to completeness of multiplicative Ehresmann connections. We prove two completeness criteria. The first generalises \cite{Fernandes2023}*{Proposition 2.17} to fibrations and is in terms of the connections induced on the kernel, while the second is in terms of the connection induced on the base.

We discuss first the connection induced on the kernel. Given a Lie groupoid surjective submersion $\pi\colon \grG\to \grH$, its kernel
\[
	\grK: = \ker(\pi) = \pi^{-1}(\gru(N))\tto M
\]
is a family of Lie groupoids with projection denoted
\[
	\underline{\pi}: =  \grs\circ \pi|_{\mathrm{\grK}}\colon \grK\to N.
\]
First, we note that MECs are inherited by kernels (for extensions, this was remarked in \cite{Laurent-Gengoux2009}*{Section 4.1} and in \cite{Fernandes2023}*{Section 2.4}).

\begin{proposition}
	Let $\pi\colon \grG\to \grH$ be a Lie groupoid surjective submersion. Given a MEC $\Hor\tto \Hor_0$ for $\pi$, we have that
	\[
		\Hor^{\grK}: = \Hor\cap T\grK\tto \Hor_0
	\]
	is a MEC on the kernel $\underline{\pi}\colon \grK\to N$.
\end{proposition}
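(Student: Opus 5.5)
The plan has three steps: show that $\Hor^{\grK}$ is a complement to $\ker T\underline{\pi}$ inside $T\grK$, show that it is a set-theoretic subgroupoid of $T\grK\tto TM$ with base $\Hor_0$, and then upgrade it to a \VB-subgroupoid using Proposition~\ref{prp: cond VBsubgrpd}.

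\textbf{Step 1: complement and tangent spaces.} The kernel $\grK=\pi^{-1}(\gru(N))$ is the preimage of the embedded submanifold $\gru(N)\subset\grH$ under the submersion $\pi$. Hence it is an embedded submanifold with
\[
T_g\grK=(T_g\pi)^{-1}\big(T\gru(T_{\pi_0(\grs(g))}N)\big).
\]
Since $\underline{\pi}=\grs\circ\pi|_{\grK}$ and $\grs\circ\gru=\mathrm{id}$, we get $\ker T\underline{\pi}=\ker T\pi\cap T\grK=\ker T\pi$ along $\grK$. Here the second equality holds because $\ker T\pi\subset T\grK$.

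Now take $v\in T_g\grK$ and decompose it as $v=v^h+v^v$ with $v^h\in\Hor_g$ and $v^v\in\ker T_g\pi$. Then $v^v\in T\grK$, so $v^h=v-v^v\in T\grK$ as well, and therefore $T_g\grK=\Hor^{\grK}_g\oplus\ker T_g\underline{\pi}$.

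Equivalently, $\Hor^{\grK}_g=\hor\big(T\gru(T_{\pi(g)}N),g\big)$. Since $T\pi$ restricted to $\Hor_g$ is injective, $\Hor^{\grK}$ has constant rank $\dim N$. It is also smooth: it is the image of the smooth injective bundle map $(v,g)\mapsto\hor(T\gru(v),g)$ from $TN\fp{}{N}\grK$ into $T\grK$. So $\Hor^{\grK}$ is an Ehresmann connection for $\underline{\pi}$.

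\textbf{Step 2: subgroupoid with base $\Hor_0$.} First I identify the base. Because $\Hor\cap TM=T\gru(\Hor_0)$ (as in the proof of Proposition~\ref{prp: cond MC}) and $TM\subset T\grK$ (since $M\subset\grK$), we get $\Hor^{\grK}\cap TM=\Hor_0$.

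Next, closure under the structure maps. The maps $T\grs,T\grt$ send $\Hor^{\grK}$ into $\Hor_0$. The maps $T\gri$ and $T\grm$ preserve both $\Hor$ (by multiplicativity) and $T\grK$ (because $\grK$ is a Lie subgroupoid, so $T\grK\tto TM$ is a subgroupoid of $T\grG$). Hence they preserve the intersection $\Hor^{\grK}$.

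Finally, every element of $\Hor_0$ must actually occur as a source of an element of $\Hor^{\grK}$, which already holds via units: $T\gru(\Hor_0)\subset\Hor^{\grK}$. Thus $\Hor^{\grK}\tto\Hor_0$ is a set-theoretic subgroupoid of $T\grK\tto TM$.

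\textbf{Step 3: \VB-subgroupoid.} By Step~1, $\Hor^{\grK}$ is a vector subbundle, and by Step~2 it is a subgroupoid. Proposition~\ref{prp: cond VBsubgrpd} then makes $\Hor^{\grK}\tto\Hor_0$ a \VB-subgroupoid of $T\grK$, hence a MEC for $\underline{\pi}$.

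The $\Hor_0$ here is indeed an Ehresmann connection for the correct base map. The groupoid $\grK$ is a family over $N$, and its base map is $\pi_0\colon M\to N$. So the base connection $\Hor_0$ for $\pi_0$ is the right one.

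The main obstacle is Step~1: one has to check carefully that the intersection $\Hor\cap T\grK$ has constant rank. The transversality-type splitting argument above handles this.
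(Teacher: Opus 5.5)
Your proposal is correct and follows essentially the same route as the paper. Both restrict the Ehresmann connection to the preimage $\grK=\pi^{-1}(\gru(N))$ to get a complement of $\ker T\underline{\pi}$, observe that $\Hor\cap T\grK$ is set-theoretically a subgroupoid as an intersection of two subgroupoids of $T\grG$, and conclude with Proposition~\ref{prp: cond VBsubgrpd}. The only minor difference is in identifying the base: you do it through the units, $\Hor^{\grK}\cap TM=\Hor_0$, whereas the paper shows directly that $T\grs\colon(\Hor^{\grK})_k\to(\Hor_0)_{\grs(k)}$ is surjective, using that $\hor$ covers $\hor_0$; either suffices.
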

\begin{proof}
	We use the following general fact. Let $f:A\to B$ be a surjective submersion, $B'\subset B$ an embedded submanifold, and $A': = f^{-1}(B')$. Then $A'$ is an embedded submanifold, and $f': = f|_{A'}\colon A'\to B'$ is a surjective submersion. Moreover, if $\Hor$ is an Ehresmann connection for $f$, then $\Hor': = \Hor\cap T A'$ is an Ehresmann connection for $f'$ with corresponding horizontal lift given by restriction:
	\[
		\hor' = \hor|_{TB'\fp{}{B'}A'}\colon TB'\fp{}{B'}A'\subset TB\fp{}{B}A\to \Hor' = \Hor\cap\ TA'.
	\]

	Using this fact for $f: = \pi\colon \grG\to \grH$ and $B': = \gru(N)$, we obtain that $\Hor^{\grK}$ defined above is a connection for $\pi'\colon \grK\to \gru(N)$ (which is the map $\underline{\pi}$ from the statement, under the identification $N\cong\gru(N)$). Moreover, the horizontal lift restricts to a groupoid isomorphism:
	\[
		\hor^{\grK}\colon T\gru(N)\fp{}{\gru(N)}\grK\to \Hor^{\grK},
	\]
	where $\Hor^{\grK}$---as the intersection of two groupoids---is set-theoretically a groupoid. Using that $\hor$ is a groupoid map covering $\hor_0$, we have that:
	\[
		T\grs(\hor(T\gru(v),k)) = \hor_0(v,\grs(k)),\quad \textrm{for all}\quad v\in T_{x}N, \ k\in \pi^{-1}(\gru(x)).
	\]
	Since $\hor_0(\cdot,\grs(k))\colon T_{x}N\to (\Hor_0)_{\grs(k)}$ is a linear isomorphism, the equality shows that also $T\grs\colon (\Hor^{\grK})_{k}\to (\Hor_0)_{s(k)}$ is surjective. In particular, the base of $\Hor^{\grK}$ is $\Hor_0$. That $\Hor^{\grK}\tto \Hor_0$ is indeed a \VB-subgroupoid of $T\grK$ follows now from Proposition~\ref{prp: cond VBsubgrpd}.
\end{proof}

The following result generalises \cite{Fernandes2023}*{Proposition 2.17} to fibrations.

\begin{theorem}\label{thm: compl ker imply compl total for Lie grpd fib}
	Let $\pi\colon\grG\to\grH$ be a Lie groupoid fibration with kernel bundle $\underline{\pi}\colon \mathcal{K} := \ker(\pi)\to N$. A multiplicative Ehresmann connection $\Hor$ for $\pi$ is complete if and only if the induced multiplicative Ehresmann connection $\Hor^{\grK}$ on $\underline{\pi}$ is complete.
\end{theorem}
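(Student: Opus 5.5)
For the direction ``$\Rightarrow$'' I would argue directly with lifts. Let $\delta\colon I\to N$ be a path and $k\in\grK$ with $\underline{\pi}(k)=\delta(0)$. Since $\Hor$ is complete, the $\Hor$-lift $\tau^t_{\gru(\delta)}(k)$ is defined on all of $I$. It covers $\gru(\delta)$, so it stays in $\grK=\pi^{-1}(\gru(N))$. Its velocity lies in $\Hor\cap T\grK=\Hor^{\grK}$, so it is the $\Hor^{\grK}$-lift of $\delta$. Hence $\Hor^{\grK}$ is complete.

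For ``$\Leftarrow$'' I would first show that $\Hor_0$ is complete. Given $\delta$ in $N$ and $x\in M$ over $\delta(0)$, take the $\Hor^{\grK}$-lift of $\delta$ through $\gru(x)$, which is globally defined by assumption. Since $\Hor^{\grK}\tto\Hor_0$ is a \VB-subgroupoid, $T\grs$ maps $\Hor^{\grK}$ into $\Hor_0$. So applying $\grs$ gives an $\Hor_0$-horizontal path from $x$ covering $\delta$, as in part (b) of the proof of Proposition \ref{prp: cond MC}.

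Now fix $\gamma\colon I\to\grH$ and $g\in\pi^{-1}(\gamma(0))$, and suppose the $\Hor$-lift $\tau^t_\gamma(g)$ has maximal domain $[0,T)$ with $T\le 1$. I aim for a contradiction. Let $x(t):=\tau^t_{\grs(\gamma)}(\grs(g))$ be the complete $\Hor_0$-lift. By Proposition \ref{prp: cond MC}, $\grs(\tau^t_\gamma(g))=x(t)$ for $t<T$.

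The fibration hypothesis is used here. The map $\pi\times\grs\colon\grG\to\grH\fp{\grs}{\pi_0}M$ is a surjective submersion, so I pick $g_T$ with $(\pi(g_T),\grs(g_T))=(\gamma(T),x(T))$. I then take a local section $\sigma$ of $\pi\times\grs$ through $g_T$. By local existence of ODE solutions with uniform time on a neighbourhood of $g_T$, there is $\varepsilon>0$ with the following property: for every $t_0<T$ close enough to $T$, the point $\hat g:=\sigma(\gamma(t_0),x(t_0))$ has an $\Hor$-lift $\hat g(t)$ of $\gamma$ (started at time $t_0$) defined on $[t_0,t_0+\varepsilon]$, and $t_0+\varepsilon>T$.

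Since $\pi(\tau^{t_0}_\gamma(g))=\pi(\hat g)$ and $\grs(\tau^{t_0}_\gamma(g))=\grs(\hat g)$, the arrow $k:=\tau^{t_0}_\gamma(g)\,\hat g^{-1}$ is defined and lies in $\grK$. It sits over the point $\grt(\gamma(t_0))$. Let $k(t)$ be the $\Hor^{\grK}$-lift, started at time $t_0$ from $k$, of the path $\grt\circ\gamma$ in $N$. This lift exists on all of $[t_0,1]$ by the assumed completeness.

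The paths $k(t)$ and $\hat g(t)$ are composable. Indeed, both $\grs(k(t))$ and $\grt(\hat g(t))$ are $\Hor_0$-lifts of $\grt\circ\gamma$ starting at the same point, so they agree by Proposition \ref{prp: cond MC}(b). By Proposition \ref{prp: cond MC}(c), $k(t)\hat g(t)$ is then the $\Hor$-lift of $\gru(\grt\circ\gamma)\cdot\gamma=\gamma$ through $k\hat g=\tau^{t_0}_\gamma(g)$. By uniqueness of horizontal lifts, it extends $\tau^t_\gamma(g)$ to $[0,t_0+\varepsilon]$, which contains $T$. This contradicts the maximality of $T$.

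The main obstacle is this uniform-time local step: choosing $\sigma$ and $\varepsilon$ so that the lifts from $\hat g$ exist past $T$. That needs $x(t_0)\to x(T)$, which follows from the completeness of $\Hor_0$ established above, together with the standard uniformity of ODE existence times on a compact neighbourhood of $g_T$.
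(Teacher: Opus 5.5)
Your proposal is correct and is essentially the paper's proof. The forward direction is identical. For the converse, the paper likewise uses completeness of $\Hor_0$ and surjectivity of $\pi\times\grs$ to get an arrow over $(\gamma(T),x(T))$, divides to produce a path in $\grK$, extends it with the complete connection $\Hor^{\grK}$, and multiplies back to contradict maximality. The only difference is that the paper lifts $\gamma$ through $g_T$ at time $T$ in both time directions, which gives your $\hat g=\tau^{t_0,T}_\gamma(g_T)$ directly and so avoids the local section and the uniform-existence-time argument you flagged as the main obstacle.
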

\begin{proof}
	Assume that $\Hor$ is a complete MEC for $\pi$. Then for any $\delta\in C^{\infty}(I,N)$ and any $k\in \pi^{-1}(\gru(\delta(0)))\subset \grK$, the $\Hor$-horizontal lift $\tau^{t}_{\gru(\delta)}(k)$ is defined on $I$. Since $\pi\circ\tau_{\gru(\delta)}(k) = \gru(\delta)$, it follows that $\tau^{t}_{\gru(\delta)}(k)\in \grK$. Thus $\frac{d}{dt}\tau^{t}_{\gru(\delta)}(k)\in T\grK\cap \Hor = \Hor^{\grK}$. So $\tau^{t}_{\gru(\delta)}(k)$ is also the $\Hor^{\grK}$-horizontal lift of $\gru(\delta)$. This shows that $\Hor^{\grK}$ is complete.

	Conversely, assume that $\Hor$ is MEC for $\pi$ such that $\Hor^{\mathcal{K}}$ is complete. Note that $\Hor$ and $\Hor^{\mathcal{K}}$ induce the same connection $\Hor_0$ on $\pi_0\colon M\to N$; simply because
	\[
		\Hor_0 = T(\gru(M))\cap \Hor\subset T\mathcal{K}\cap \Hor = \Hor^{\mathcal{K}}.
	\]
	By Proposition~\ref{prp:current groupoid cond MC} completeness of $\Hor^{\mathcal{K}}$ implies completeness of $\Hor_0$.

	Consider a smooth path $\gamma\in C^{\infty}(I,\grH)$ pick $g\in\pi^{-1}(\gamma(0))$. Assume $\gamma$ cannot be lifted completely at $g$, and consider its maximal lift
	\[
		\tau^{t}_{\gamma}(g)\colon[0,{t_{\text{m}}})\to\grG.
	\]

	By completeness of $\Hor_0$, the $\Hor_0$-lift of $\grs(\gamma)$ at $\grs(g)$ is defined on $I$, and by Proposition~\ref{prp: cond MC}, for $t\in [0,{t_{\text{m}}})$ it is given by
	\[
		\tau^{t}_{\grs(\gamma)}(\grs(g)) = \grs(\tau^{t}_{\gamma}(g)).
	\]
	In particular, we can consider $\tau^{{t_{\text{m}}}}_{\grs(\gamma)}(\grs(g))\in M$. Since $\pi$ is a Lie groupoid fibration, there exists $g'\in \grG$ such that:
	\[
		\pi(g') = \gamma({t_{\text{m}}})\qquad \textrm{and}\qquad \grs(g') = \tau^{{t_{\text{m}}}}_{\grs(\gamma)}(\grs(g)).
	\]

	Next, consider the $\Hor$-lift of $\gamma$ which at $t = {t_{\text{m}}}$ gives $g'$:
	\[
		\tau^{\cdot,{t_{\text{m}}}}_{\gamma}(g'):({t_{\text{m}}}-\delta,{t_{\text{m}}}+\delta)\to \grG,\qquad t\mapsto\tau^{t,{t_{\text{m}}}}_{\gamma}(g'),
	\]
	which is well-defined for some $0<\delta<\mathrm{min}({t_{\text{m}}},1-{t_{\text{m}}})$. Using Proposition~\ref{prp: cond MC}, this satisfies for all $t\in ({t_{\text{m}}}-\delta,{t_{\text{m}}}+\delta)$
	\[
		\grs\circ\tau^{t,{t_{\text{m}}}}_{\gamma}(g') = \tau_{\grs(\gamma)}^{t,{t_{\text{m}}}}(\grs(g')) = \tau^{t,{t_{\text{m}}}}_{\grs(\gamma)}\big( \tau^{{t_{\text{m}}}}_{\grs(\gamma)}(\grs(g))\big) = \tau^{t}_{\grs(\gamma)}(\grs(g)),
	\]
	where the last equality uses the multiplicativity property of parallel transport. We conclude that the following path is well-defined
	\[
		\eta\colon ({t_{\text{m}}}-\delta,{t_{\text{m}}})\to\grG,\qquad\eta(t): = \tau_{\gamma}^{t}(g)\cdot\big(\tau_{\gamma}^{t,{t_{\text{m}}}}(g')\big)^{-1}.
	\]
	Proposition~\ref{prp: cond MC} implies that $\eta$ is a horizontal path. It covers
	\[
		\pi(\eta(t)) = \pi(\tau_{\gamma}^{t}(g))\cdot\pi(\tau_{\gamma}^{t,{t_{\text{m}}}}(g'))^{-1} =  \gamma(t)\cdot\gamma^{-1}(t) =  \gru(\grt(\gamma))(t).
	\]
	Fix $t_0\in ({t_{\text{m}}}-\delta,{t_{\text{m}}})$ and denote $k_0: = \eta(t_0)$. By completeness of $\Hor^{\grK}$, the $\Hor^{\grK}$-horizontal lift of $\gru\circ\grt(\gamma)$, which at $t = t_0$ equals $k_0$ is defined on $I$
	\[
		\tau^{t,t_0}_{\gru\circ\grt(\gamma)}(k_0)\colon I\to \grK.
	\]
	By the uniqueness of horizontal lifts, we have that:
	\[
		\eta(t) = \tau^{t,t_0}_{\gru\circ\grt(\gamma)}(k_0), \qquad\mbox{for } t\in ({t_{\text{m}}}-\delta,{t_{\text{m}}}).
	\]
	This implies smoothness of the path
	\[
		\zeta\colon [0,{t_{\text{m}}}+ \delta)\to\grG,\qquad t\mapsto
		\begin{cases}
			\tau^{t}_{\gamma}(g)                                                             & \mbox{if }t\in [0,{t_{\text{m}}}),                            \\
			\tau^{t,t_0}_{\gru(\grt(\gamma))}(k_0)\cdot \tau^{t,{t_{\text{m}}}}_{\gamma}(g') & \mbox{if }t\in ({t_{\text{m}}}-\delta,{t_{\text{m}}}+\delta).
		\end{cases}
	\]
	The fact that $\zeta$ is horizontal and covers $\gamma$ contradicts the maximality of ${t_{\text{m}}}$.

	We conclude that $\Hor$ is complete as well.
\end{proof}

In fact, we have proven a stronger result: the implication
\[
	\Hor\textrm{ is complete }\implies \Hor^{\grK}\textrm{ is complete }
\]
does not use the assumption that $\pi$ is a Lie groupoid fibration. However, if the base is source-connected, existence of a complete MEC already implies that we are dealing with a Lie groupoid fibration, as the following result shows.

\begin{proposition}\label{prp: complt MC and s-conn imply Lgrpd fib}
	Let $\pi\colon\grG\to\grH$ be a Lie groupoid surjective submersion which admits a complete MEC. If $\grH$ is source-connected, then $\pi$ is a Lie groupoid fibration.
\end{proposition}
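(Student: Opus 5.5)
We need to show that $\pi\times\grs\colon\grG\to\grH\fp{\grs}{\pi_0}M$ is a surjective submersion. The plan is to prove surjectivity first, using completeness together with source-connectedness of $\grH$. Submersivity will then follow from the horizontal lift.

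\emph{Surjectivity.} Fix $(h,x)$ with $\grs(h)=\pi_0(x)=:y$. Since $\grH$ is source-connected, the fibre $\grs^{-1}(y)$ is a connected manifold, so there is a smooth path $\gamma\colon I\to\grs^{-1}(y)$ with $\gamma(0)=\gru(y)$ and $\gamma(1)=h$. Its source path is constant: $\grs(\gamma)\equiv y$. Put $g_0:=\gru(x)$, which satisfies $\pi(g_0)=\gru(y)=\gamma(0)$. Because the connection is complete, the lift $g:=\tau^1_\gamma(g_0)$ is defined, and $\pi(g)=h$. By Proposition~\ref{prp: cond MC}(b),
\[
\grs(\tau^t_\gamma(g_0))=\tau^t_{\grs(\gamma)}(x).
\]
The $\Hor_0$-lift of a constant path is constant, since its velocity is the horizontal lift of the zero vector. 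Hence $\grs(g)=x$, and surjectivity is proved.

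\emph{Submersivity.} Take $g\in\grG$ and a tangent vector $(v,w)\in T_{\pi(g)}\grH\times T_{\grs(g)}M$ with $T\grs(v)=T\pi_0(w)$. The plan is to decompose $w=\hor_0(T\pi_0 w,\grs(g))+w^{\mathrm{vert}}$ with $w^{\mathrm{vert}}\in\ker T\pi_0$. For the first summand, set $u:=\hor(v,g)\in\Hor_g$. Then $T\pi(u)=v$, and since $\hor$ is a groupoid morphism covering $\hor_0$,
\[
T\grs(u)=\hor_0(T\grs(v),\grs(g))=\hor_0(T\pi_0(w),\grs(g)).
\]
It therefore remains to hit $(0,w^{\mathrm{vert}})$, i.e.\ to show that $T\grs$ maps $\ker T\pi$ at $g$ onto $\ker T\pi_0$ at $\grs(g)$.

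\emph{Main obstacle: the vertical directions.} This is the step I expect to be hardest. For $k\in\grK$ there is the right translation $R_g$ by $g$, defined on $\grs^{-1}(\grt(g))$, which preserves fibres of $\pi$ there. This reduces the question to surjectivity of $T\grs\colon \ker T\pi|_{\gru(M)}\to\ker T\pi_0$. At the unit, however, $T\gru$ maps $\ker T\pi_0$ into $\ker T\pi$, and $T\grs\circ T\gru=\mathrm{id}$. So at $\gru(\grt(g))$ every vertical $w'$ is hit by $T\gru(w')$.

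Transporting to $g$ needs care, because right translation fixes the source. I would instead use left translation: $L_g$ is defined on $\grt^{-1}(\grs(g))$, and the map $k\mapsto gk$ on $\grK\cap\grt^{-1}(\grs(g))$ preserves $\pi$-fibres and has $\grs(gk)=\grs(k)$. Vertical curves through $\gru(\grs(g))$ in $\grK\cap\grt^{-1}(\grs(g))$ whose sources sweep $\ker T\pi_0$ would be needed. These come from the same construction: lift a vertical curve $c$ in $\pi_0^{-1}(\pi_0(\grs g))$ starting at $\grs(g)$ to a curve in $\grK$ with target fixed. Using surjectivity of $\pi\times\grs$ already shown, applied pointwise, together with a local section argument, or alternatively by invoking Theorem~\ref{thm:weaking_fibration} from the appendix (whose easily checkable conditions likely reduce a fibration to surjectivity of $\pi\times\grs$ plus such a kernel condition), this step should close. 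I would first check whether Theorem~\ref{thm:weaking_fibration} already says that a surjective submersion with $\pi\times\grs$ surjective is a fibration; if so, the proof is just the surjectivity argument above.
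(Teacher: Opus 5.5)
Your surjectivity argument is exactly the paper's proof, and your fallback is precisely what the paper does: the second condition in Theorem~\ref{thm:weaking_fibration} says that a Lie groupoid surjective submersion with $\pi\times\grs$ surjective is already a fibration, so the proof is complete once surjectivity is shown. Discard the left-translation sketch, though, because it would fail: curves in $\grK\cap\grt^{-1}(\grs(g))$ through the unit have source-velocities only in the image of the anchor of $\mathrm{Lie}(\grK)$ (for $\pi=\pi_0$ a submersion between unit groupoids these curves are constant), whereas surjectivity of $T\grs\colon\ker T_g\pi\to\ker T_{\grs(g)}\pi_0$ holds for every Lie groupoid surjective submersion because $\ker T\pi\tto\ker T\pi_0$ is a \VB-subgroupoid (Proposition~\ref{prp: cond VBsubgrpd}), which is the ingredient the appendix uses to prove that theorem.
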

\begin{proof}
	By Theorem~\ref{thm:weaking_fibration} it suffices to show that, for any $x\in M$, $\pi$ restricts to a surjection between $\grs$-fibres $\pi\colon\grs^{-1}(x)\to \grs^{-1}(\pi_0(x))$. Take some $h\in \grs^{-1}(\pi_0(x))$. By assumption, there is a smooth path $\gamma\colon I\to \grs^{-1}(\pi_0(x))$ such that $\gamma(0) = \gru(\pi_0(x))$ and $\gamma(1) = h$. By completeness, $t\mapsto \tau_{\gamma}^{t}(\gru(x))$ is defined on $I$. Proposition~\ref{prp: cond MC} yields $\grs\circ\tau_{\gamma}^{t}(\gru(x)) = \tau_{\grs(\gamma)}^{t}(x) = x$. Thus $g: = \tau_{\gamma}^1(\gru(x))\in \grs^{-1}(x)$ and satisfies $\pi(g) = h$.
\end{proof}

For later use, let us note the following consequence of the proof of Theorem \ref{thm: compl ker imply compl total for Lie grpd fib}.

\begin{corollary}\label{coro:base_of_complete_is_loc_triv}
	Let $\pi\colon\grG\to\grH$ be a Lie groupoid surjective submersion which admits a complete MEC. Then $\pi_0\colon M\to N$ admits a complete Ehresmann connection.
\end{corollary}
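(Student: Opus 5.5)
The plan is to read off a complete Ehresmann connection for $\pi_0$ directly from the given complete MEC $\Hor\tto\Hor_0$, namely its base $\Hor_0$. We then show that $\Hor_0$ is complete. This is exactly the first step in the proof of Theorem~\ref{thm: compl ker imply compl total for Lie grpd fib}, which used only the completeness of a multiplicative connection and not the fibration hypothesis.

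First I would check that $\Hor_0$ is an Ehresmann connection for $\pi_0$; this was noted right after the definition of a MEC. Next, fix a path $\delta\in C^{\infty}(I,N)$ and a point $x\in M$ with $\pi_0(x)=\delta(0)$. Consider the path $\gru(\delta)\in C^{\infty}(I,\grH)$ and the point $\gru(x)\in\grG$, which satisfies $\pi(\gru(x))=\gru(\pi_0(x))=\gru(\delta)(0)$. Since $\Hor$ is complete, the lift $\tilde\gamma(t):=\tau^t_{\gru(\delta)}(\gru(x))$ is defined on all of $I$.

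I then set $\tilde\delta:=\grs\circ\tilde\gamma\colon I\to M$. This path is $\Hor_0$-horizontal, because $T\grs(\Hor)\subset\Hor_0$. It starts at $\grs(\gru(x))=x$. It covers $\delta$, since
\[
	\pi_0\circ\grs\circ\tilde\gamma=\grs\circ\pi\circ\tilde\gamma=\grs\circ\gru\circ\delta=\delta.
\]
By uniqueness of horizontal lifts, $\tilde\delta=\tau_\delta(x)$, and this lift is defined on all of $I$. Hence $\Hor_0$ is complete.

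Alternatively, one can phrase this through Proposition~\ref{prp:current groupoid cond MC}. If $\tau$ is surjective onto the fibre product, then applying the source map of the current groupoids shows that the base map $C^\infty(I,M)^{\Hor_0}\to C^\infty(I,N)\fp{\mathrm{ev}_0}{\pi_0}M$ is surjective, and this surjectivity is exactly completeness of $\Hor_0$. There is no real obstacle here. The only point needing care is the identity $\grs(\tau^t_\gamma(g))=\tau^t_{\grs(\gamma)}(\grs(g))$ from Proposition~\ref{prp: cond MC}, which we use on the full interval where the left-hand side is defined; above I replaced it with the direct horizontality argument, which avoids assuming a priori that the right-hand side exists.
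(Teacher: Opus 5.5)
Your proof is correct and is essentially the paper's argument. The paper gets the corollary from the proof of Theorem~\ref{thm: compl ker imply compl total for Lie grpd fib}: completeness of $\Hor$ gives completeness of $\Hor^{\grK}$ without using the fibration hypothesis, and Proposition~\ref{prp:current groupoid cond MC} then passes to $\Hor_0$; your direct lift of $\gru(\delta)$ at $\gru(x)$ followed by applying $\grs$ simply merges these two steps into one.
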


The assumption that $\pi$ is a fibration was used for the proof of the implication
\[
	\Hor^{\grK}\textrm{ is complete }\implies \Hor\textrm{ is complete}.
\]
The next example shows that the fibration assumption was indeed necessary.

\begin{example}\label{ex: compl ker not imply compl total}
	Let $\grH = N\times \Gamma\tto N$ be a constant family of groups, where $\Gamma$ is any discrete, non-trivial group. Fix $x_0\in N$ and consider the open subgroupoid $\grH^{*} := \grH\backslash (\{x_0\}\times \Gamma\backslash\{e_{\Gamma}\})\tto N$. Consider the disjoint union Lie groupoid
	\[
		\big(\grG\tto N\sqcup N\big) :=  \big(\grH\tto N\big)\, \sqcup\, \big(\grH^{*}\tto N\big).
	\]

	We have an induced Lie groupoid surjective submersion $\pi\colon\grG\to \grH$, which is the identity and inclusion on the respective factors. The map $\pi$ is not a Lie groupoid fibration, as, for $g\neq e_{\Gamma}$, the arrow $(x_0,g)$ cannot be lifted over the copy of $x_0$ in the second factor.

	The map $\pi$ is a local diffeomorphism, and the unique connection is multiplicative. However, $\pi$ is not a covering space, as the point $(x_0,e_{\Gamma})\in \grH$ is not evenly covered, and so the connection is not complete.

	The kernel bundle is simply the unit groupoid $\grK = N\sqcup N$ and the projection $\underline{\pi}\colon\grK\to N$ is a covering space. So it admits the path-lifting property and thus the induced connection on the kernel bundle is complete.
\end{example}
Next, using the fact that the unit component of a Lie groupoid is generated by an arbitrary open neighbourhood of the units, we show that if the kernel is a source-connected Lie groupoid, then the completeness problem can be reduced further to the base connection, providing a partial converse to Corollary \ref{coro:base_of_complete_is_loc_triv}.

\begin{theorem}\label{thm: complete for s-conn}
	Let $\pi\colon\grG\to\grH$ be a Lie groupoid fibration and suppose that the kernel is source-connected. A multiplicative Ehresmann connection $\Hor$ for $\pi$ is complete if and only if the induced connection $\Hor_0$ on the base submersion $\pi_0$ is complete.
\end{theorem}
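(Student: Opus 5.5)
The forward implication is already available. If $\Hor$ is complete, then $\Hor^{\grK}$ is complete by the first half of the proof of Theorem~\ref{thm: compl ker imply compl total for Lie grpd fib}. Completeness of $\Hor^{\grK}$ then gives completeness of $\Hor_0$ through Proposition~\ref{prp:current groupoid cond MC}; equivalently, one can cite Corollary~\ref{coro:base_of_complete_is_loc_triv}. For the converse, assume $\Hor_0$ is complete. By Theorem~\ref{thm: compl ker imply compl total for Lie grpd fib} it suffices to show that $\Hor^{\grK}$ is complete on the family $\underline{\pi}\colon\grK\to N$. So we may work entirely with the family of source-connected groupoids $\grK\tto M$, a path $\delta\in C^\infty(I,N)$, and $k\in\grK$ with $\underline\pi(k)=\delta(0)$.

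\emph{Step 1 (good set).} Let $\mathcal{U}\subset\grK$ be the set of arrows $k$ such that the $\Hor^{\grK}$-lift of every path starting at $\underline\pi(k)$ exists through $k$ on all of $I$. Units lie in $\mathcal{U}$: by Proposition~\ref{prp: cond MC}(a) the lift through $\gru(x)$ is $\gru(\tau_\delta^t(x))$, which is defined on $I$ because $\Hor_0$ is complete. Proposition~\ref{prp: cond MC} also shows that $\mathcal{U}$ is closed under inversion and under multiplication of composable arrows. Indeed, if the lifts through $k$ and $k'$ exist on $I$, their sources and targets are $\Hor_0$-lifts, so they stay composable, and their pointwise product is horizontal and covers $\delta$. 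It is therefore the lift through $kk'$.

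\emph{Step 2 (neighbourhood of units).} I would show that $\mathcal{U}$ contains an open neighbourhood of $\gru(M)$ inside $\grK$. The argument is local and uses a tubular/flow-box estimate. For a fixed compact piece of $M$ and a fixed path $\delta$, the lifts through units exist on $I$, and the solutions of the lifting ODE depend continuously on the initial point. Hence for $k$ close to a unit the lift exists on $I$.

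This is the main obstacle. The existence interval depends on the path $\delta$, so a neighbourhood obtained this way is only uniform over a compact family of paths, not over all of them. I would handle this by a reduction. Completeness only needs to be checked for lifts along a single path $\delta$, with $k$ ranging over $\underline\pi^{-1}(\delta(0))$. So I would fix $\delta$ and define $\mathcal{U}_\delta$, the set of $k$ whose lift along $\delta$ (and along its restrictions and re-started subpaths, so that the group-like closure properties still hold) exists. Openness in $\underline\pi^{-1}(\delta(0))$ near the units then follows from continuous dependence on initial conditions together with the fact that $\gru(\tau_\delta^t(x))$ is defined on the compact interval $I$. One may need this pointwise in $x$ rather than uniformly, but an open neighbourhood of the units is all the next step needs.

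\emph{Step 3 (generation).} In a source-connected Lie groupoid, any open neighbourhood of the units that is closed under inversion generates the whole groupoid. Restricted to the fibre groupoid $\underline\pi^{-1}(\delta(0))\tto\pi_0^{-1}(\delta(0))$, which is source-connected as well, this gives that every $k$ is a finite product of elements of $\mathcal{U}_\delta$. By the closure of Step 1, $k\in\mathcal{U}_\delta$. Hence every lift of $\delta$ exists on $I$, so $\Hor^{\grK}$ is complete, and Theorem~\ref{thm: compl ker imply compl total for Lie grpd fib} gives completeness of $\Hor$.

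The subtle point is to check that the fibres of $\grK$ over points of $N$ are the source-connected groupoids in question, i.e. that source-connectedness of $\grK$ is inherited by each $\underline\pi^{-1}(y)\tto\pi_0^{-1}(y)$. This holds because $\grs$-fibres of $\grK$ lie inside a single $\underline\pi$-fibre.
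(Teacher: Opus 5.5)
Your proof is correct and uses the same mechanism as the paper. After reducing to the kernel via Theorem~\ref{thm: compl ker imply compl total for Lie grpd fib}, the domain of the time-one horizontal transport is an open subgroupoid containing the units, so it is everything by source-connectedness. The paper applies this to the flows of the multiplicative vector fields $\hor^{\grK}(X)$ on all of $\grK$, via Proposition~\ref{prop:lift:multipl} and the lemma on multiplicative vector fields. You instead apply it to parallel transport along a fixed path $\delta$ on the fibre groupoid over $\delta(0)$, which handles arbitrary paths directly without passing through complete vector fields. The parenthetical about restrictions and re-started subpaths can be dropped: the domain of $\tau^1_\delta$ is already closed under products and inverses by Proposition~\ref{prp: cond MC}.
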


For this, we will use the following:

\begin{lemma}
	Let $X$ be a multiplicative vector field on $\grG\tto M$, which covers $X_{M}$ on $M$. If $\grG$ is source-connected and $X_{M}$ is complete, then so is $X$.
\end{lemma}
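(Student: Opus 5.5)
Let $\phi^{\epsilon}$ denote the local flow of $X$ and $\phi_M^{\epsilon}$ the (global) flow of $X_M$. The plan is to show that the set of arrows whose flow line exists for all time is an open subgroupoid containing the units. Source-connectedness then forces it to be all of $\grG$.

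First I record the standard facts about multiplicative vector fields. Since $X\colon\grG\to T\grG$ is a groupoid morphism covering $X_M$, the flow of $X$ is compatible with the structure maps wherever defined. Concretely:
\begin{itemize}
\item $X$ is $\grs$- and $\grt$-related to $X_M$, so $\grs(\phi^{\epsilon}(g))=\phi^{\epsilon}_M(\grs(g))$, and similarly for $\grt$.
\item $X$ is tangent to the units with $X|_M=X_M$, so $\phi^{\epsilon}(1_x)=1_{\phi^{\epsilon}_M(x)}$ for all $\epsilon\in\bbR$. Hence every unit has a complete flow line.
\item $X$ is $\gri$-related to itself, so $\phi^{\epsilon}(g^{-1})=\phi^{\epsilon}(g)^{-1}$.
\item If $g,h$ are composable and both flow lines are defined on an interval $J\ni 0$, then $\epsilon\mapsto\phi^{\epsilon}(g)\phi^{\epsilon}(h)$ is defined on $J$ (composability is preserved by the $\grs,\grt$ relations), is an integral curve of $X$ (by $T\grm(X_g,X_h)=X_{gh}$), and starts at $gh$. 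By uniqueness of integral curves, which uses that $\grG$ is Hausdorff, it equals $\phi^{\epsilon}(gh)$.
\end{itemize}

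Now let $\grG^c\subset\grG$ be the set of $g$ whose integral curve is defined on all of $\bbR$. By the facts above, $\grG^c$ contains $M$ and is closed under inversion and multiplication, so it is a subgroupoid. Fix a compact time interval $[-T,T]$. Since $M\subset\grG$ is an embedded submanifold and the flow domain of $X$ is open, the flow-box theorem together with compactness of $[-T,T]$ gives an open neighbourhood $U_T$ of $M$ on which $\phi^{\epsilon}$ is defined for $|\epsilon|\le T$. Shrinking, I take $U_T$ symmetric, i.e.\ $U_T=U_T^{-1}$.

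Source-connectedness of $\grG$ implies that every arrow is a finite product of elements of $U_T$: the subgroupoid generated by $U_T$ is open, so its intersection with each source fibre is open and closed and contains the unit. By the multiplication fact, every arrow therefore has flow defined on $[-T,T]$. Since $T$ was arbitrary, $X$ is complete.

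The main obstacle is the step that a neighbourhood of the units generates the whole source-connected groupoid. I handle it as indicated: the generated set $W=\bigcup_n U_T^n$ is open. Its complement in each $\grs$-fibre is open too, because if $g\notin W$ and $u\in U_T$ with $gu^{-1}$ composable, then $gu^{-1}\notin W$, and right multiplication by $U_T$ gives neighbourhoods. Then $W\cap\grs^{-1}(x)$ is clopen and contains $1_x$, so it is the whole connected fibre. A minor point needing care is that only the time-bounded version $U_T$ is used, since a single neighbourhood working for all times need not exist.
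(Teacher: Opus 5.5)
Your proof is correct and is essentially the paper's argument. The paper takes the open set where the time-$1$ flow is defined, notes that multiplicativity makes it an open subgroupoid containing $M$, and cites the fact that such a subgroupoid of a source-connected groupoid is everything. You do the same on $[-T,T]$, where the full set of arrows whose flow exists on $[-T,T]$ is already an open subgroupoid by your own facts, so $W=U_T$. You also prove the generation fact yourself.

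One small convention slip: with $\grm(g,h)=gh$ defined for $\grs(g)=\grt(h)$, the translations preserving $\grs$-fibres are the left ones $g\mapsto ug$, not right multiplication. Alternatively, run your clopen argument in $\grt$-fibres, which are connected if and only if the $\grs$-fibres are.
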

\begin{proof}
	The set $U$ where $\phi_{X}^1$ ($:= $ the flow of $X$ at time $1$) is defined, is an open subset of $\grG$. Since $X$ is multiplicative, $X$ is tangent to the zero-section $M$. Thus, its restriction to $M$ is the complete vector field $X_{M}$ and $M\subset U$. Multiplicativity implies that $U$ is a subgroupoid. The only open subgroupoid of a source-connected Lie groupoid that contains the units is the whole groupoid (see, for example \cite{Crainic2021}*{Proposition~13.7}). Hence $X$ is complete.
\end{proof}

\begin{proof}[Proof of Theorem~\ref{thm: complete for s-conn}]
	By Theorem \ref{thm: compl ker imply compl total for Lie grpd fib}, it suffices to show that completeness of $\Hor_0$ implies that of $\Hor^{\grK}$. This is equivalent to the horizontal lift to $\grK$ of any complete vector field $X\in \mathfrak{X}(N)$ be complete. Since $X$ is a multiplicative vector field on the unit groupoid, by Proposition \ref{prop:lift:multipl} $\hor^{\mathcal{K}}(X)$ is multiplicative, and it covers $\hor_0(X)$, which is complete because $\Hor_0$ is. Hence, the previous lemma yields the result.
\end{proof}

The example below shows that source-connectedness is a necessary condition.
\begin{example}\label{counter-ex: complete for s-conn}
	Consider a Lie groupoid, similar to the one in Example \ref{ex: compl ker not imply compl total}:
	\[
		\grG = (M\times \Gamma)\backslash(\{x_0\}\times \Gamma\backslash\{e_{\Gamma}\}) \tto M
	\]
	where $\Gamma$ is any discrete, non-trivial group and $x_0\in M$. Then $\grG$ is a bundle of Lie groups with projection $\pi = \mathrm{pr}_{1}\colon \grG\to M$. Notice that $\pi$ is a local diffeomorphism, and, therefore, it admits only a unique connection $\Hor = 0_{\grG}$, which is not complete as $\pi$ is not a covering map, but it is clearly multiplicative. Nevertheless, the base connection $\Hor_0 = 0_{M}$ on $\pi_0 = \mathrm{id}_{M}$ is clearly complete.
\end{example}

For Morita fibrations, by using also Proposition \ref{prp:Morita_fib}, we obtain the following.

\begin{corollary}\label{prp:Morita_fib_complete}
	A Morita fibration $\pi\colon \grG\to \grH$ admits a complete MEC if and only if the base map $\pi_0\colon M\to N$ admits a complete Ehresmann connection.
\end{corollary}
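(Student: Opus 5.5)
The plan is to prove the two implications separately, using Proposition~\ref{prp:Morita_fib} for existence and Theorem~\ref{thm: complete for s-conn} for completeness.

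\emph{Necessity.} Suppose $\pi$ admits a complete MEC $\Hor\tto\Hor_0$. Corollary~\ref{coro:base_of_complete_is_loc_triv} applies to any Lie groupoid surjective submersion. It gives at once that $\pi_0$ admits a complete Ehresmann connection, namely $\Hor_0$. Nothing specific to Morita fibrations is needed here.

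\emph{Sufficiency.} Let $\Hor_0$ be a complete Ehresmann connection for $\pi_0\colon M\to N$.

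1. By Proposition~\ref{prp:Morita_fib} there is a unique MEC $\Hor$ for $\pi$ with base $\Hor_0$. After identifying $\grG\cong\pi_0^{*}\grH$, it is $\Hor=\overline{\pi}_0^{*}T\grH$.

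2. A Morita fibration is a uniform fibration, hence a Lie groupoid fibration. So Theorem~\ref{thm: complete for s-conn} will give completeness of $\Hor$ from completeness of $\Hor_0$, once we know the kernel is source-connected.

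3. The kernel of $\mathrm{pr}\colon\pi_0^{*}\grH\to\grH$ consists of triples $(x,1_{n},y)$ with $\pi_0(x)=\pi_0(y)=n$. This is the submersion groupoid $M\times_N M\tto M$, whose source fibres are the fibres of $\pi_0$. These need not be connected, so source-connectivity may fail and step 2 cannot be used as stated.

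\emph{The main obstacle} is therefore the possible failure of source-connectivity in step 3. I would bypass it with Theorem~\ref{thm: compl ker imply compl total for Lie grpd fib}, which only needs completeness of the kernel connection $\Hor^{\grK}$.

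4. Identify $\Hor^{\grK}$ explicitly. It is the restriction of the pullback, consisting of triples $(u,T\gru(w),v)$ with $u,v\in\Hor_0$ and $T\pi_0 u=T\pi_0 v=w$.

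5. Show that horizontal lifts for $\Hor^{\grK}$ are pairs of lifts. Take a path $\delta$ in $N$ and a starting point $(x,1_{\delta(0)},y)$. The $\Hor^{\grK}$-lift is
\[
t\mapsto\big(\tau^{t}_{\delta}(x),\,1_{\delta(t)},\,\tau^{t}_{\delta}(y)\big),
\]
where each entry is an $\Hor_0$-lift. Since $\Hor_0$ is complete, this lift is defined on all of $I$.

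6. Hence $\Hor^{\grK}$ is complete, and Theorem~\ref{thm: compl ker imply compl total for Lie grpd fib} yields completeness of $\Hor$.

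Alternatively, one could skip the kernel and lift an arbitrary path $\gamma$ in $\grH$ starting at $(x,\gamma(0),y)$ directly as
\[
\big(\tau_{\grt\gamma}(x),\,\gamma,\,\tau_{\grs\gamma}(y)\big),
\]
which is defined on all of $I$ by completeness of $\Hor_0$.
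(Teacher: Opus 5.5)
Your proof is correct. The necessity part is exactly the paper's argument. Your closing ``alternative'' for sufficiency is precisely the paper's proof: after identifying $\grG\cong\pi_0^{*}\grH$, the paper reads off from the pullback description $\Hor=\Hor_0\fp{\overline{\pi}_0}{T\grt}T\grH\fp{T\grs}{\overline{\pi}_0}\Hor_0$ of Proposition~\ref{prp:Morita_fib} that the $\Hor$-lift of $\gamma$ through $(x,\gamma(0),y)$ is $t\mapsto(\tau^{t}_{\grt(\gamma)}(x),\gamma(t),\tau^{t}_{\grs(\gamma)}(y))$. This lift is defined on all of $I$ when $\Hor_0$ is complete.

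Your primary route through the kernel is also valid. Morita fibrations are fibrations by Corollary~\ref{prp: uniform fib is fib}, your descriptions of $\Hor^{\grK}$ and of its lifts are right, and Theorem~\ref{thm: compl ker imply compl total for Lie grpd fib} then applies. You are also right that Theorem~\ref{thm: complete for s-conn} cannot be used, since the kernel $M\fp{\pi_0}{\pi_0}M$ has the fibres of $\pi_0$ as source fibres.

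The detour buys nothing, though. The kernel lift you compute in step 5 is just the special case $\gamma=\gru\circ\delta$ of the direct formula, so the same computation is needed anyway. On top of it, you invoke the appendix result on uniform fibrations and the maximal-lift argument of Theorem~\ref{thm: compl ker imply compl total for Lie grpd fib}. The direct computation is self-contained and also exhibits the parallel transport of $\Hor$ explicitly, so it is the better choice here.
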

\begin{proof}
	By Corollary \ref{coro:base_of_complete_is_loc_triv}, we know that the base Ehresmann connection of a complete MEC is complete as well.

	Conversely, any Morita fibration is isomorphic to pullback $\mathrm{pr}:\pi_0^{*}\grH\to \grH$, $(x,h,y)\mapsto h$. By Proposition \ref{prp:Morita_fib}, any connection $\Hor_0$ for $\pi_0$ gives a unique MEC $\Hor$ for $\mathrm{pr}$. The explicit description of this correspondence from the proof of Proposition \ref{prp:Morita_fib} implies that the $\Hor$-horizontal lift of $\gamma\colon I\to \grH$ is given by
	\[
		t\mapsto \tau_{\gamma}^{t}(x,\gamma(0),y) = (\tau_{\grt(\gamma)}^{t}(x),\gamma(t),\tau^{t}_{\grs(\gamma)}(y)),
	\]
	for any $x\in \pi_0^{-1}(\grt(\gamma)(0))$, $y\in \pi_0^{-1}(\grt(\gamma)(0))$, where $\tau_{\grt(\gamma)}$ and $\tau_{\grs(\gamma)}$ are $\Hor_0$-horizontal lifts. Thus, if $\Hor_0$ is complete, then so is $\Hor$.
\end{proof}

Next, for uniform Lie groupoid fibrations, we obtain a variant of Proposition~\ref{prp: ex MC conn uniform Lgrpd fib} in the complete case, with the same strategy of proof.

\begin{corollary}
	Let $\pi\colon\grG\to\grH$ be a uniform Lie groupoid fibration. If $\pi_0\colon M\to N$ admits a complete Ehresmann connection and $\pi^{*}\colon\grG\to\pi_0^{*}\grH$ admits a complete MEC then $\pi$ admits a complete MEC.
\end{corollary}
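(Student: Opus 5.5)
We follow the strategy of Proposition~\ref{prp: ex MC conn uniform Lgrpd fib} and factor $\pi = \mathrm{pr}\circ\pi^{*}$, where $\mathrm{pr}\colon\pi_0^{*}\grH\to\grH$ is the canonical Morita fibration. Let $\Hor_0$ be a complete Ehresmann connection for $\pi_0$. By Corollary~\ref{prp:Morita_fib_complete}, more precisely by the explicit formula in its proof, the unique MEC $\Hor^{2}$ for $\mathrm{pr}$ with base $\Hor_0$ from Proposition~\ref{prp:Morita_fib} is complete. Let $\Hor^{1}$ be a complete MEC for $\pi^{*}$. The base map of $\pi^{*}$ is $\mathrm{id}_M$, so $\Hor^1_0$ is the zero connection on $\mathrm{id}_M$. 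We then take the composite MEC given by the lemma on compositions:
\[
	\Hor := \Hor^{1}\cap (T\pi^{*})^{-1}(\Hor^{2}) \tto \Hor_0 .
\]
Its base is $0\cap(\mathrm{id})^{-1}(\Hor_0)$, interpreted correctly as the composite base connection for $\pi_0 = \pi_0\circ \mathrm{id}_M$, which is just $\Hor_0$.

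The remaining step is to show that this composite connection is complete. This is the general fact that a composite of complete Ehresmann connections is complete. Let $\gamma\colon I\to\grH$ be a path and $g\in\pi^{-1}(\gamma(0))$. By completeness of $\Hor^{2}$, the $\Hor^2$-lift $\tilde\gamma := \tau_{\gamma}(\pi^{*}(g))$ is defined on all of $I$. By completeness of $\Hor^{1}$, the $\Hor^{1}$-lift $\hat\gamma := \tau_{\tilde\gamma}(g)$ is also defined on $I$. The velocity of $\hat\gamma$ lies in $\Hor^{1}$, and its image under $T\pi^{*}$ is the velocity of $\tilde\gamma$, which lies in $\Hor^{2}$. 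Hence $\hat\gamma$ is $\Hor$-horizontal and covers $\mathrm{pr}\circ\tilde\gamma = \gamma$. By uniqueness of horizontal lifts, $\hat\gamma = \tau_\gamma(g)$, so the $\Hor$-lift exists on all of $I$.

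The main point needing care is checking that the horizontal lift of the composite connection is indeed the composite of the two lifts. This amounts to verifying that $\Hor\oplus\ker T\pi = T\grG$ with $T\pi|_{\Hor}$ factoring as $T\mathrm{pr}|_{\Hor^2}\circ T\pi^{*}|_{\Hor}$, and that $T\pi^{*}|_{\Hor}\colon \Hor\to\Hor^{2}$ is fibrewise an isomorphism. Both follow from the ranks: $\Hor^1$ is isomorphic to the pullback of $T\pi_0^{*}\grH$, and $\Hor^{2}$ to the pullback of $T\grH$. The rest is the short path argument above.
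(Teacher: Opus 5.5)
Your proof is correct and is the argument the paper intends (``with the same strategy of proof'' as Proposition~\ref{prp: ex MC conn uniform Lgrpd fib}): factor $\pi=\mathrm{pr}\circ\pi^{*}$, take the complete MEC on $\mathrm{pr}$ from Corollary~\ref{prp:Morita_fib_complete}, compose the two connections via the composition lemma, and note that lifting first along $\mathrm{pr}$ and then along $\pi^{*}$ gives the full $\Hor$-lift on $I$. One small slip: since $\ker T\mathrm{id}_M=0$, the unique connection for $\mathrm{id}_M$ is $\Hor^{1}_0=TM$, not $0$, so the base of the composite connection is directly $TM\cap\Hor_0=\Hor_0$ and needs no reinterpretation.
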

The existence and completeness of multiplicative Ehresmann connections in the case of Lie groupoid extensions have been characterised in \cite{Fernandes2023}.

\section{Families of Lie groupoids}\label{sec:loc_triv_fam}
A \emph{family of Lie groupoids} is a Lie groupoid surjective submersion
\[
	\pi\colon (\grG\tto M) \to N,
\]
where $N$ is a manifold regarded as a unit groupoid. This yields indeed a family of Lie groupoids indexed by $y\in N$:
\[
	\grG_{y}: = \pi^{-1}(y)\tto \pi_0^{-1}(y),
\]
where, as usually, $\pi_0\colon M\to N$ denotes the base map of $\pi$.

In this section, we discuss in detail multiplicative Ehresmann connections for families, with focus on the existence and completeness problems.

We begin with a useful description of MECs in this setting.

\begin{lemma}\label{lema:multiplicative_lift}
	An Ehresmann connection for a family of Lie groupoids is multiplicative if and only if the horizontal vector fields coincide with the multiplicative vector fields.
\end{lemma}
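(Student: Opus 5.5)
The statement should be read as follows. An Ehresmann connection $\Hor$ for $\pi\colon\grG\to N$ is multiplicative if and only if, for every $X\in\mathfrak{X}(N)$, the horizontal lift $\hor(X)\in\mathfrak{X}(\grG)$ is a multiplicative vector field. (This is the only meaningful reading, since every vector field on the unit groupoid $N\tto N$ is multiplicative.) The plan is to prove the two implications separately: one is a direct application of Proposition~\ref{prop:lift:multipl}, the other is a pointwise check using the criterion of Proposition~\ref{prp: cond VBsubgrpd}.

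\emph{Forward implication.} Assume $\Hor$ is a MEC. Any $X\in\mathfrak{X}(N)$, viewed as a map $N\to TN$, is a groupoid morphism of unit groupoids, so $X\in\mathfrak{X}_{\mathrm{mult}}(N)$. Proposition~\ref{prop:lift:multipl} then shows that $\hor(X)$ is multiplicative.

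\emph{Converse, first step: the base connection.} Assume that $\tilde X:=\hor(X)$ is multiplicative for every $X\in\mathfrak{X}(N)$. A multiplicative vector field is tangent to $M\subset\grG$, and its restriction is the base field $\tilde X_M$. So $\tilde X_M(x)\in \Hor\cap TM$ for every $x\in M$. Since $\pi\circ\gru=\pi_0$, we have $T\pi_0(\tilde X_M(x))=X(\pi_0(x))$. As $X$ ranges over $\mathfrak{X}(N)$, the values $X(\pi_0(x))$ exhaust $T_{\pi_0(x)}N$. On the other hand, $\Hor\cap\ker T\pi_0\subset\Hor\cap\ker T\pi=0$. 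Together these show that $\Hor_0:=\Hor\cap TM$ is a complement to $\ker T\pi_0$, i.e.\ an Ehresmann connection for $\pi_0$. Hence $T\gru(\Hor_0)\subset\Hor$. Moreover, $T\grs(\tilde X_g)=\tilde X_M(\grs(g))$ and $T\grt(\tilde X_g)=\tilde X_M(\grt(g))$. Since every $v\in\Hor_g$ equals $\tilde X_g$ for any $X$ with $X(\pi(g))=T\pi(v)$, we get $T\grs(\Hor)\subset\Hor_0$ and $T\grt(\Hor)\subset\Hor_0$.

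\emph{Converse, second step: inversion and multiplication.} Let $(u,v)\in(\Hor_g\times\Hor_k)\cap T\grG^{(2)}$. Because $\grH=N$ is a unit groupoid, $\pi(g)=\pi(k)=:y$. Composability gives $T\grs(u)=T\grt(v)$, and applying $T\pi_0$ (using $\pi_0\circ\grs=\pi=\pi_0\circ\grt$) yields $T\pi(u)=T\pi(v)$. Pick $X\in\mathfrak{X}(N)$ with $X(y)$ equal to this common vector. Then $u=\tilde X_g$ and $v=\tilde X_k$, so multiplicativity of $\tilde X$ gives $T\grm(u,v)=\tilde X_{gk}\in\Hor$. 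In the same way, $T\gri(u)=\tilde X_{g^{-1}}\in\Hor$. Thus $\Hor\tto\Hor_0$ is a set-theoretic subgroupoid of $T\grG\tto TM$ and a vector subbundle, and Proposition~\ref{prp: cond VBsubgrpd} upgrades it to a \VB-subgroupoid, i.e.\ a MEC.

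The main obstacle is the first step of the converse: identifying the base $\Hor_0$ and showing it is a genuine connection, rather than just a subset of $TM$. The argument relies on multiplicative vector fields being tangent to the units, together with a dimension count. The remaining steps are pointwise and routine once one notes that, for families, composable horizontal vectors always project to the same vector in $TN$.
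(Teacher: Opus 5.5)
Your proposal is correct and follows essentially the same route as the paper. The forward direction uses Proposition \ref{prop:lift:multipl}. The converse uses tangency of multiplicative fields to the units to obtain $\Hor_0$, their $\grs$/$\grt$-relatedness to get $T\grs(\Hor),T\grt(\Hor)\subset\Hor_0$, and the extension of the common projection $T\pi(u)=T\pi(v)$ to a vector field on $N$ to get closure under $T\grm$ and $T\gri$. Your extra steps only make explicit what the paper leaves implicit: the rank count showing $\Hor\cap TM$ complements $\ker T\pi_0$, the justification that composable horizontal vectors project to the same vector, and the final appeal to Proposition \ref{prp: cond VBsubgrpd}.
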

\begin{proof}
	Assume that $\Hor\tto \Hor_0$ is a MEC for $\pi\colon (\grG\tto M)\to N$. Then every vector field $X\in \mathfrak{X}(N)$ is a multiplicative vector field of a unit groupoid. Therefore, Proposition \ref{prop:lift:multipl} implies that $\hor(X)$ is multiplicative.

	Conversely, let $\Hor$ be an Ehresmann connection for $\pi\colon (\grG\tto M)\to N$ such that $X_{\grG}: = \hor(X)$ is multiplicative, for all $X\in \mathfrak{X}(N)$. In particular, $X_{\grG}$ is tangent to the unit section $M$; we denote the induced vector field on $M$ by $X_{M}$. This shows that along $M$, $\Hor|_{M}\subset TM$, and so it defines an Ehresmann connection $\Hor_0: = \Hor|_{M}$ for $\pi_0$ with horizontal lift $X\mapsto X_{M}$. Since the groupoid morphism $X_{\grG}\colon \grG\to T\grG$ covers $X_{M}\colon M\to TM$, the following compatibility relations hold:
	\[
		T\grs\circ X_{\grG} = X_{M}\circ \grs\quad \textrm{and}\quad T\grt\circ X_{\grG} = X_{M}\circ \grt.
	\]
	This implies that $T\grs(\Hor)\subset \Hor_0$ and $T\grt (\Hor)\subset \Hor_0$. Consider now a composable pair $(v,w)$ in $\Hor_{g}\times \Hor_{h}$. Denote $x: = \pi(g) = \pi(h)$ and $u_0 := T_{g}\pi(v) = T_{h}\pi(w)$. We extend $u_0$ to a vector field $X$ on $N$. Horizontality implies that $v = X_{\grG}|_{g}$ and $w = X_{\grG}|_{h}$. Since $v^{\grG}$ is multiplicative, it follows that
	\[
		T_{(g,h)}\grm (u,v) = T_{(g,h)}\grm(X_{\grG}|_{g},X_{\grG}|_{h}) = X_{\grG}|_{gh}\in \Hor_{gh}.
	\]
	This shows that $\Hor$ is closed under multiplication. That $\Hor$ is also closed under inversion follows from $T \iota(X_{\grG}) = X_{\grG}\circ \iota$.
\end{proof}

Existence and completeness of multiplicative connections for families of Lie groupoids are closely related to the local triviality of the family.

\begin{definition}\label{def:loc_triv}
	The family of Lie groupoids $\pi\colon\grG\to N$ is called \emph{locally trivial} with typical fibre the Lie groupoid $\mathcal{F}\tto F$, if $N$ admits an open cover $\{U^{\alpha}\}_{\alpha\in \Lambda}$, together with local trivializations that are groupoid isomorphisms:
	\[
		\begin{tikzcd}
			{\pi^{-1}(U^{\alpha})} & {U^{\alpha}\times \mathcal{F}} \\
			{\pi_0^{-1}(U^{\alpha})} & {U^{\alpha}\times F}
			\arrow["{\psi^\alpha}", "\sim"', from = 1-1, to = 1-2]
			\arrow[shift left, from = 1-1, to = 2-1]
			\arrow[shift right, from = 1-1, to = 2-1]
			\arrow[shift left, from = 1-2, to = 2-2]
			\arrow[shift right, from = 1-2, to = 2-2]
			\arrow["{\psi^\alpha_0}", "\sim"', from = 2-1, to = 2-2]
		\end{tikzcd}, \quad \textrm{such that}\quad \mathrm{pr}_{1}\circ \psi^{\alpha} = \pi|_{\pi^{-1}(U^{\alpha})}.
	\]
\end{definition}

\subsection{Existence}
The following existence result will be refined later on to incorporate completeness as well.
\begin{proposition}\label{prp: LC imply MC}
	Any locally trivial family of Lie groupoids admits a MEC.
\end{proposition}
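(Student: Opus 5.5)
The plan is to build the connection locally from the trivializations and then glue with a partition of unity on $N$. The gluing step is where multiplicativity has to be checked, and Lemma~\ref{lema:multiplicative_lift} is the tool for that.

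First, the local model. On $U^{\alpha}\times\mathcal{F}\to U^{\alpha}$ take the trivial (flat) connection $\Hor^{\alpha}:=TU^{\alpha}\times 0_{\mathcal{F}}\tto TU^{\alpha}\times 0_F$. Its horizontal lift sends $X\in\mathfrak{X}(U^{\alpha})$ to $(X,0)$. This is a product of multiplicative vector fields, namely $X$ on the unit groupoid $U^\alpha$ and $0$ on $\mathcal{F}$, so it is multiplicative.

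Transport $\Hor^\alpha$ via the groupoid isomorphism $\psi^{\alpha}$. Since $\psi^\alpha$ intertwines the projections to $U^\alpha$, this gives an Ehresmann connection on $\pi^{-1}(U^{\alpha})$ whose horizontal lifts $\hor^{\alpha}(X)=(\psi^{\alpha})^{*}(X,0)$ are multiplicative vector fields on the groupoid $\pi^{-1}(U^{\alpha})\tto\pi_0^{-1}(U^{\alpha})$. (Alternatively, $T\psi^\alpha$ is a \VB-groupoid isomorphism, so the pulled-back horizontal bundle is a \VB-subgroupoid directly.)

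Second, the gluing. Choose a partition of unity $\{\rho_{\alpha}\}$ on $N$ subordinate to $\{U^{\alpha}\}$, and define
\[
	\hor(X):=\sum_{\alpha}(\rho_{\alpha}\circ\pi)\,\hor^{\alpha}(X|_{U^\alpha}),
\]
which is a convex combination of horizontal lifts. It is $C^\infty(N)$-linear in $X$ and satisfies $T\pi\circ\hor(X)=X\circ\pi$, so it is the horizontal lift of an Ehresmann connection $\Hor$ for $\pi$.

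To conclude via Lemma~\ref{lema:multiplicative_lift}, we must show that $\hor(X)$ is multiplicative. Two facts make this work:
\begin{itemize}
	\item the functions $f\circ\pi$, for $f\in C^\infty(N)$, are multiplicative in the sense that $(f\circ\pi)(gh)=(f\circ\pi)(g)=(f\circ\pi)(h)$, since $\pi(g)=\pi(h)$ for composable arrows in a family;
	\item multiplicative vector fields on $\grG$ restricted to $\pi^{-1}(U)$ form a module over $C^\infty(N)$.
\end{itemize}
For the second fact, check that $Y\colon\grG\to T\grG$ remains a groupoid morphism after scaling by $f\circ\pi$. At a composable pair, $T\grm(c\,v,c\,w)=c\,T\grm(v,w)$ with $c=f(\pi(g))$, because $T\grm$ is fibrewise linear, being a \VB-groupoid map. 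Sums of multiplicative vector fields are likewise multiplicative by linearity of $T\grm$ on composable pairs. The local fields $\rho_\alpha\hor^\alpha(X)$ extend by zero smoothly to all of $\grG$, and remain multiplicative because the support condition is $\pi$-saturated. A locally finite sum of these is multiplicative.

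Hence every $\hor(X)$ is multiplicative. Lemma~\ref{lema:multiplicative_lift}, which asks only that horizontal lifts be multiplicative, then shows that $\Hor$ is a MEC.

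The main subtlety is this last step. Averaging \VB-subgroupoids is not allowed in general, as Example~\ref{ex:Luca} warns. It works here only because the weights are pulled back from $N$, and are therefore constant along the composable pairs of a family. The other point to verify carefully is that the extension by zero is smooth and multiplicative; this reduces to the support of $\rho_\alpha\circ\pi$ being closed in $\grG$ and contained in $\pi^{-1}(U^\alpha)$.
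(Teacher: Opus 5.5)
Your proof is correct and takes essentially the paper's route: you transport the trivial product connection through each $\psi^\alpha$ and glue with the partition of unity $\{\rho_\alpha\circ\pi\}$, which is constant on composable pairs (the paper's ``biinvariant'' partition of unity), and your multiplicativity check via Lemma~\ref{lema:multiplicative_lift} makes explicit what the paper only asserts. One small inaccuracy in your closing commentary: Example~\ref{ex:Luca} is not about averaging; it shows that, outside of families, multiplicativity of the lifts of multiplicative vector fields does not characterise MECs.
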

\begin{proof}
	Let $\{(U^{\alpha},\psi^{\alpha})\}_{\alpha\in\Lambda}$ be a trivialising cover. We use the local trivialization to obtain for each $\alpha\in \Lambda$ a MEC on $\pi^{-1}(U^{\alpha})$ with horizontal lift
	\[
		\hor^{\alpha}\colon \pi^{*}TU^{\alpha}\to T\pi^{-1}(U^{\alpha}),\qquad (x,u)\mapsto(T_{x}\psi^{\alpha})^{-1}(u,0).
	\]
	Consider a partition of unity $\{\chi^{\alpha}\}_{\alpha\in\Lambda}$ on $N$ subordinate to the cover $\{U^{\alpha}\}_{\alpha\in\Lambda}$. The pullbacks $\{ \pi^{*}\chi^{\alpha}\}_{\alpha\in \Lambda}$ define a biinvariant partition of unity on $\grG$ which is subordinate to $\{\pi^{-1}(U^{\alpha})\}_{\alpha\in\Lambda}$. We glue the horizontal lifts $\hor^{\alpha}$ using this partition of unity to obtain $\hor: =  \sum_{\alpha\in\Lambda}\pi^{*}\chi^{\alpha}\cdot \hor^{\alpha}$, a horizontal lift corresponding to a multiplicative Ehresmann connection for $\pi$.
\end{proof}
Any family of Lie groupoids $\pi\colon (\grG\tto M)\to N$ with $\pi$ proper is locally trivial (see \cite{CrMeStr20}*{Theorem 7.8} and \cite{delHoyo2019}*{Theorem 5.0.7}); therefore, it admits a MEC. The techniques developed in \cite{CrMeStr20} imply an even stronger version of this result.

\begin{theorem}\label{thm:prop_fam}
	Any family of Lie groupoids $\pi\colon (\grG\tto M)\to N$, where $\grG\tto M$ is a proper Lie groupoid, admits a MEC.
\end{theorem}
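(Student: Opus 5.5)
Since $\grG\tto M$ is proper, I will construct a MEC locally over $N$ and then glue the local pieces. Multiplicativity survives partitions of unity pulled back from $N$, by the argument in the proof of Proposition~\ref{prp: LC imply MC}. The reason is that for a family the functions $\pi^{*}\chi$ are constant along the groupoid structure, $\pi^{*}\chi(gh)=\pi^{*}\chi(g)=\pi^{*}\chi(h)$, and an affine combination of \VB-groupoid sections of $T\grG\to \pi^{*}TN$ is again a \VB-groupoid morphism. Equivalently, by Lemma~\ref{lema:multiplicative_lift}, a convex combination of horizontal lifts sending every $X\in\mathfrak{X}(N)$ to a multiplicative vector field again sends $X$ to a multiplicative vector field, since $\mathfrak{X}_{\mathrm{mult}}(\grG)$ is a $C^\infty(N)$-module for families. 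It therefore suffices to construct, for each $y\in N$, a MEC on $\pi^{-1}(U)\tto\pi_0^{-1}(U)$ for some neighbourhood $U\ni y$.

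Work locally with coordinates $y_1,\dots,y_n$ on $U$. By Lemma~\ref{lema:multiplicative_lift} it is enough to find multiplicative vector fields $X_1,\dots,X_n$ on $\pi^{-1}(U)$ with $T\pi(X_i)=\partial_{y_i}$, possibly after shrinking $U$. Then $\hor(\sum f_i\partial_{y_i}):=\sum (\pi^*f_i)X_i$ is a horizontal lift, it is multiplicative by the module remark, and its image is exactly a MEC. So the task reduces to lifting each coordinate field $\partial_{y_i}$ to a multiplicative vector field on the proper groupoid $\grG|_U$.

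To construct these lifts I will average. First pick any vector field $\tilde X$ on $M$ lifting $\partial_{y_i}$ and any lift $V$ on $\grG$ with $T\grs(V)=\tilde X\circ\grs$, using that $\pi\times \grs\colon\grG\to N\fp{}{\pi_0}M=M$ is just $\grs$; for a family, $\grs$ is a submersion compatible with $\pi$. The obstruction to multiplicativity is measured in the deformation complex of $\grG$ (Crainic–Mestre–Struchiner). The relevant cocycle is a $1$-cochain/$2$-cocycle built from $V$, with values in the representation-up-to-homotopy, and it lives in a subcomplex where the $N$-component is constrained. Properness gives vanishing of the relevant cohomology in degree $\geq 1$ via Haar-system averaging, and that vanishing produces a correction making $V$ multiplicative. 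This mirrors the proof in \cite{CrMeStr20} that proper families are locally trivial, but now without properness of $\pi$ itself. The correction must be vertical (in $\ker T\pi$) so that $T\pi(X_i)=\partial_{y_i}$ is preserved. For this I will work in the complex of $\pi$-vertical deformation cochains, which is a subcomplex because $\pi$ is a groupoid morphism to the unit groupoid. Its cohomology also vanishes in positive degrees by the same averaging operator, since averaging commutes with $T\pi$ once the Haar system is $\pi$-fibrewise.

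The main obstacle is this vanishing step: identifying precisely which cochain represents the failure of multiplicativity of $V$ (the class $[\delta V]$ in degree $2$, with $\delta V(g,h)=T\grm(V_g,V_h)-V_{gh}$), and checking that averaging, applied to the vertical part, yields a primitive whose subtraction from $V$ gives a genuinely multiplicative field rather than one multiplicative only up to homotopy. If the full deformation-complex argument proves too delicate, my fallback is to use the linearisation/local model of proper groupoids around the orbit through a point of $\pi_0^{-1}(y)$ together with the argument of \cite{CrMeStr20}*{Theorem 7.8}. That argument would give local triviality of $\grG$ near each orbit over $U$, hence local MECs via Proposition~\ref{prp: LC imply MC}, glued over $M$ with invariant partitions of unity.
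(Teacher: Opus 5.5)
Your overall strategy is the one of \cite{CrMeStr20}*{Lemma 6.5}, which the paper also follows, and your reductions are correct. Gluing with $\pi^{*}\chi^{\alpha}$ preserves multiplicativity, $\mathfrak{X}_{\mathrm{mult}}(\grG)$ is a $C^\infty(N)$-module for a family, and multiplicative lifts of the coordinate fields give a MEC by Lemma~\ref{lema:multiplicative_lift}. But these steps are routine. The real content of the theorem is the step you leave open as your ``main obstacle'', and as you state it, it is not right.

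\begin{itemize}
\item Vanishing ``in degree $\geq 1$'' (or ``in positive degrees'' for the vertical complex) is false. Take the unit groupoid $M\tto M$, viewed as a family over $N$ via a submersion $\pi_0$ with positive-dimensional fibres. There are no $0$-cochains, and every vertical vector field is a vertical $1$-cocycle, so vertical $H^1$ is $\Gamma(\ker T\pi_0)\neq 0$. Only degree $2$ is needed.
\item Your $2$-cochain $T\grm(V_g,V_h)-V_{gh}$ is undefined. Since $V$ is only $\grs$-projectable, the pair $(V_g,V_h)$ need not be composable. The deformation differential uses division instead, $\delta V(g,h)=V_g-T\grm\big(V_{gh},T\gri(V_h)\big)\in T_g\grG$. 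This makes sense because $V_{gh}$ and $V_h$ have the same $\grs$-projection. It is vertical because $\pi(ab^{-1})=\pi(a)$ in a family and $V$ is $\pi$-projectable.
\item The worry about fields multiplicative only ``up to homotopy'' is spurious: the $1$-cocycles of the deformation complex are exactly the multiplicative vector fields.
\end{itemize}
What you actually need is that the vertical $2$-cocycle $\delta V$ is the coboundary of a vertical, $\grs$-projectable vector field. Beyond the word ``averaging'', you give no argument for this.

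The paper supplies this step by an explicit formula. Take a normalized, left-invariant Haar system along the $\grt$-fibres and set $c(g):=\int_{\grs(g)}\delta V(g,h)\,dh$. Then $c$ is vertical and $\grs$-projectable, and $V-c=\widehat V$ with $\widehat V_g=\int_{\grs(g)}T\grm\big(V_{gh},T\gri(V_h)\big)\,dh$. One checks the following directly.
\begin{itemize}
\item $\widehat V$ is $\grs$-projectable to $Y_x:=\int_x T\grt(V_h)\,dh$, and by left-invariance it is also $\grt$-projectable to $Y$.
\item Hence $(\widehat V_{g_1},\widehat V_{g_2})$ is composable. Left-invariance and fibrewise linearity of $T\grm$ then make the product telescope to $\widehat V_{g_1g_2}$.
\item Normalization gives $T\pi(\widehat V)=T\pi(V)$.
\end{itemize}
With this formula your localisation becomes unnecessary. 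Applying it to $\hor_{\grs}(\hor_0(X))$ for all $X\in\mathfrak{X}(N)$ at once is already $C^\infty(N)$-linear, because the averaging runs over $\grt$-fibres, which lie inside $\pi$-fibres.

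Your fallback does not close the gap either. \cite{CrMeStr20}*{Theorem 7.8} assumes that $\pi$ itself is proper, and the paper's remark on the Palais--Richardson example shows that properness of $\grG$ alone does not give local triviality over $N$. Local triviality near single orbits would require a parametrised linearisation theorem, which you neither state nor prove.
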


When $N$ is an interval, the result was proven in \cite{CrMeStr20}*{Lemma 6.5} using the notion of deformation cohomology, which is developed in that paper, together with explicit homotopy operators for the deformation cohomology of proper Lie groupoids. The same proof strategy applies in our case. For the convenience of the reader, we present a self-contained proof that avoids the explicit use of deformation cohomology and instead extracts only the minimal tools from \cite{CrMeStr20}. Nevertheless, \cite{CrMeStr20} remains indispensable for a deeper understanding of the phenomenon.

As in \cites{Crainic2003,CrMeStr20}, we will use that on a proper Lie groupoid $\grG\tto M$ one can build linear averaging operators along the $\grt$-fibres:
\begin{equation}\label{eq:averaging}
	C^{\infty}(\grt^{-1}(x))\to \mathbb{R},\qquad f\mapsto \int_{\grt^{-1}(x)}f(h)dh\stackrel{\mathrm{not.}}{ = }\int_{x}f(h)dh,
\end{equation}
which are normalized: $\int_{x}dh = 1$, left-invariant: $\int_{\grs(g)}f(gh)dh = \int_{\grt(g)}f(h)dh$, and smooth: if $f\in C^{\infty}(\grG)$ then $x\mapsto \int_{x}f$ is in $C^{\infty}(M)$. These can be used to make vector fields multiplicative.

\begin{lemma}
	Let $\grG\tto M$ be a proper Lie groupoid with averaging operators along the $\grt$-fibres \eqref{eq:averaging}. Let $X\in\mathfrak{X}(\grG)$ be an $\grs$-projectable vector field. Then
	\[
		g\mapsto \widehat{X}_{g}: = \int_{\grs(g)}T\grm\big(X_{gh},T\gri(X_{h})\big)dh\in T_{g}\grG,
	\]
	defines a multiplicative vector field $\widehat{X}$ on $\grG$.
\end{lemma}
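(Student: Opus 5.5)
The plan is to verify three things directly from the properties of the averaging operators \eqref{eq:averaging}: that $\widehat{X}$ is well defined, that it is smooth, and that it is a groupoid morphism $\grG\to T\grG$ over some vector field on $M$. Since $X$ is $\grs$-projectable, there is a vector field $Y$ on $M$ with $T\grs(X_k)=Y_{\grs(k)}$ for all $k\in\grG$.

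\emph{Well-definedness.} Fix $g$ and $h\in\grt^{-1}(\grs(g))$, so that $gh$ is defined and $\grs(gh)=\grs(h)$. The vectors $X_{gh}\in T_{gh}\grG$ and $T\gri(X_h)\in T_{h^{-1}}\grG$ are composable in $T\grG\tto TM$, because
\[
T\grs(X_{gh})=Y_{\grs(h)}=T\grs(X_h)=T\grt(T\gri(X_h)).
\]
Their product lies in $T_{gh\cdot h^{-1}}\grG=T_g\grG$. Hence the integrand is a function on $\grt^{-1}(\grs(g))$ with values in the fixed vector space $T_g\grG$, and the averaging operator can be applied componentwise.

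\emph{Smoothness.} I would realise the integrand as a smooth map on the manifold $\{(g,h):\grs(g)=\grt(h)\}=\grG\fp{\grs}{\grt}\grG$, valued in the pullback of $T\grG$ along the first projection. I would then trivialise locally in $g$ and apply the smoothness of the averaging (in the parametrised form used in \cite{CrMeStr20}, where the parameter is $g$ rather than only $x=\grs(g)$). I expect this to be the main technical point, but it is routine once one works in local frames of $T\grG$ near $g$.

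\emph{Source and target.} Applying $T\grs$ and using linearity of $T\grs$ on $T_g\grG$,
\[
T\grs(\widehat{X}_g)=\int_{\grs(g)}T\grs\big(T\gri(X_h)\big)\,dh=\int_{\grs(g)}T\grt(X_h)\,dh=:Z_{\grs(g)},
\]
which defines a smooth vector field $Z$ on $M$. For the target, $T\grt(\widehat{X}_g)=\int_{\grs(g)}T\grt(X_{gh})\,dh$. By left-invariance, applied to $F(k)=T\grt(X_k)\in T_{\grt(g)}M$, this equals $\int_{\grt(g)}T\grt(X_{k})\,dk=Z_{\grt(g)}$.

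\emph{Multiplicativity.} This is the key step. Fix a composable pair $(g,k)$; then
\[
\widehat{X}_{gk}=\int_{\grs(k)}T\grm\big(X_{gkh},T\gri(X_h)\big)\,dh.
\]
Using associativity in $T\grG$ and the identity $T\grm(X_{kh},T\gri(X_{kh}))=T\gru(T\grt(X_{kh}))$, I would rewrite the integrand as
\[
T\grm\Big(T\grm\big(X_{gkh},T\gri(X_{kh})\big),\,T\grm\big(X_{kh},T\gri(X_h)\big)\Big).
\]
Each pair $(A_h,B_h)$ of these factors lies in the fixed vector space $T_{(g,k)}\grG^{(2)}$, on which $T\grm$ is linear, because $T\grG$ is a \VB-groupoid. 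Since averaging commutes with linear maps,
\[
\widehat{X}_{gk}=T\grm\Big(\int A_h\,dh,\int B_h\,dh\Big).
\]
The second average is $\widehat{X}_k$ by definition. The first becomes $\widehat{X}_g$ after the left-invariance substitution $h'=kh$, applied to $f(h')=T\grm(X_{gh'},T\gri(X_{h'}))$. Hence $\widehat{X}_{gk}=T\grm(\widehat{X}_g,\widehat{X}_k)$.

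Together with the source and target computations, this shows that $\widehat{X}\colon\grG\to T\grG$ is a groupoid morphism covering $Z$, i.e.\ that $\widehat{X}$ is multiplicative. Compatibility with units and inverses is automatic for a map between groupoids that preserves multiplication.
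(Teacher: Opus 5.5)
Your proposal is correct and follows essentially the same route as the paper. Both arguments use the same $T\grs$/$T\grt$ computations via left-invariance, and prove multiplicativity by inserting a cancelling pair into the integrand, using linearity of $T\grm$ on $T_{(g,k)}\grG^{(2)}$ and one left-invariance substitution; the only difference is direction, since you start from $\widehat{X}_{gk}$ and recognise $\widehat{X}_g$ via $h'=kh$, while the paper starts from $T\grm(\widehat{X}_{g_1},\widehat{X}_{g_2})$ and first rewrites $\widehat{X}_{g_2}$ as an average over $\grt^{-1}(\grs(g_1))$. One cosmetic slip: the middle cancellation actually uses $T\grm\big(T\gri(X_{kh}),X_{kh}\big)=T\gru\big(T\grs(X_{kh})\big)$, not the identity $T\grm(X_{kh},T\gri(X_{kh}))=T\gru(T\grt(X_{kh}))$ that you quote.
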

\begin{proof}
	That $X$ is $\grs$-projectable is crucial for $T\grm(X_{gh},T\gri(X_{h}))$ to be defined. Let us check the multiplicativity conditions now.

	First, we calculate $\widehat{X}$ along the unit section using the inverse axiom on $T\grG$
	\[
		\widehat{X}_{\gru(x)} = \int_{x}T\grm\big(X_{h},T\gri(X_{h})\big)dh  = \int_{x}T\grt(X_{h})dh = :V_{x}\in T_{x}M.
	\]
	Hence $\widehat{X}\circ u = V$, where $V\in \mathfrak{X}(M)$ is a vector field.

	Next, we note that $\widehat{X}$ is $\grs$-projectable to $V$
	\[
		T\grs(\widehat{X}_{g}) = \int_{\grs(g)}T\grs\circ T\grm\big(X_{gh},T\gri(X_{h})\big)dh = \int_{\grs(g)}T\grt(X_{h})dh = V_{\grs(g)}.
	\]
	Using left-invariance, we obtain that $\widehat{X}$ is also $\grt$-projectable to $V$
	\begin{align*}
		T\grt(\widehat{X}_{g}) & = \int_{\grs(g)}T\grt\circ T\grm\big(X_{gh},T\gri(X_{h})\big)dh  = \int_{\grs(g)}T\grt(X_{gh})dh \\
		                       & = \int_{\grt(g)}T\grt(X_{h})dh = V_{\grt(g)}
	\end{align*}
	To show multiplicativity, note that $\grs$- and $\grt$-projectability of $\widehat{X}$ imply that, for any composable pair $(g_{1},g_{2})\in \grG^{(2)}$, the pair $(\widehat{X}_{g_1},\widehat{X}_{g_2})$ is also composable, i.e.\ it belongs to $(T\grG)^{(2)}$. Using this and left-invariance, we obtain
	\begin{align*}
		T\grm(\widehat{X}_{g_1},\widehat{X}_{g_2})
		 & = T\grm\Big(\int_{\grs(g_1)}T\grm\big(X_{g_1h},T\gri(X_{h})\big)dh,\int_{\grs(g_2)}T\grm\big(X_{g_2h},T\gri(X_{h})\big)dh\Big)       \\
		 & = T\grm\Big(\int_{\grs(g_1)}T\grm\big(X_{g_1h},T\gri(X_{h})\big)dh,\int_{\grs(g_1)}T\grm\big(X_{h},T\gri(X_{g_2^{-1}h})\big)dh\Big),
		\intertext{and using linearity of $T\grm$, we can further write}
		 & = \int_{\grs(g_1)}T\grm\Big( T\grm\big(X_{g_1h},T\gri(X_{h})\big),T\grm\big(X_{h},T\gri(X_{g_2^{-1}h})\big)\Big)dh                   \\
		 & = \int_{\grs(g_1)}T\grm\big(X_{g_1h},T\gri(X_{g_2^{-1}h})\big)dh                                                                     \\
		 & = \int_{\grs(g_2)}T\grm\big(X_{g_1g_2h},T\gri(X_{h})\big)dh = \widehat{X}_{g_1g_2}.
	\end{align*}
	Compatibility with inversion follows from the other axioms.
\end{proof}

\begin{proof}[Proof of Theorem \ref{thm:prop_fam}]
	We start with an Ehresmann connection $\Hor_0$ on $\pi_0\colon M\to N$ with horizontal lift $\hor_0\colon \mathfrak{X}(N)\to \mathfrak{X}(M)$. Consider a second Ehresmann connection $\Hor_{\grs}$ for the submersion $\grs\colon \grG\to M$ with horizontal lift $\hor_{\grs}\colon \mathfrak{X}(M)\to \mathfrak{X}(\grG)$. For $\pi$ we obtain an induced Ehresmann connection $\tilde{\Hor}: = \Hor_{\grs}\cap T\pi^{-1}(\Hor_0)$ with horizontal lift $\tilde{\hor} = \hor_{\grs}\circ \hor_0\colon \mathfrak{X}(N)\to \mathfrak{X}(\grG)$. Note that $\tilde{\hor}(X)$ is $\grs$-projectable to $\hor_0(X)$. We use the previous lemma to make the vector field $\tilde{\hor}(X)$ multiplicative, hence we define:
	\[
		\hor\colon \mathfrak{X}(N)\to \mathfrak{X}_{\mathrm{mult}}(\grG),\qquad \hor(X): = \widehat{\tilde{\hor}(X)}.
	\]
	Since $\pi\circ \grm(g,h) = \pi(g)$, we have that
	\begin{align*}
		T_{g}\pi(\hor(X))
		 & = \int_{\grs(g)}T\pi\circ T\grm\big(\tilde{\hor}(X)_{gh},T\gri(\tilde{\hor}(X)_{h})\big)dh \\
		 & = \int_{\grs(g)}T\pi(\tilde{\hor}(X)_{gh})dh = \int_{\grs(g)}X_{x}dh = X_{x},
	\end{align*}
	where $x = \pi(g) = \pi(gh)\in N$. Hence $\hor(X)$ is $\pi$-projectable to $X$. As the fibres of $\pi$ are $\grG$-invariant, the construction implies that $X\mapsto \hor(X)$ is $C^{\infty}(N)$-linear, and so it corresponds to the lift of an Ehresmann connection $\Hor$ for $\pi$. Lemma \ref{lema:multiplicative_lift} implies that $\Hor$ is multiplicative.
\end{proof}

\begin{remark}
	In \cite{CrMeStr20}*{Remark 7.2}, it is shown, by means of an example, that for a family of Lie groupoids $\pi\colon \grG \to N$, the condition that $\grG \tto M$ be a proper (or even source-proper) Lie groupoid does not imply local triviality of the family. The example, which relies on earlier work of Palais and Richardson, consists of a smooth family of actions $\rho_{t}$ of a compact Lie group $G$ on $\mathbb{R}^{n}$, depending smoothly on $t \in \mathbb{R}$. Viewed as a family of Lie groupoids $\grG = (G \ltimes \mathbb{R}^{n}) \times \mathbb{R}$ over $\mathbb{R}$, it is not locally trivial around $0$. Nevertheless, Theorem~\ref{thm:prop_fam} still yields a MEC for this family.
\end{remark}

\subsection{Completeness}
We recall Theorem \ref{thm: complete for s-conn} in the case of families.

\begin{corollary}
	For a source-connected family of Lie groupoids $\pi\colon\grG\to N$, a multiplicative Ehresmann connection $\Hor$ is complete if and only if the induced connection $\Hor_0$ on the base is complete.
\end{corollary}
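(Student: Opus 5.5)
The plan is to deduce this directly from Theorem~\ref{thm: complete for s-conn}, specialising it to the case $\grH = N\tto N$. For this we have to check that its hypotheses hold for a source-connected family $\pi\colon(\grG\tto M)\to N$.

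\textbf{Step 1: $\pi$ is a Lie groupoid fibration.} For a unit groupoid we have $\grH\fp{\grs}{\pi_0}M = N\fp{\mathrm{id}}{\pi_0}M\cong M$. Under this identification the map $\pi\times\grs$ from \eqref{eq: fibration} becomes just $\grs\colon\grG\to M$, because the first component $\pi(g)=\pi_0(\grs(g))$ is determined by the second. Since $\grs$ is a surjective submersion, $\pi$ is a fibration. This is also the implication ``family $\Rightarrow$ fibration'' recorded in the diagram of Section~\ref{ss:classes of maps}.

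\textbf{Step 2: the kernel is source-connected.} Since $\gru(N)=\grH$, the kernel is $\grK=\pi^{-1}(\gru(N))=\grG$. Here $\underline{\pi}=\pi$ and $\Hor^{\grK}=\Hor\cap T\grG=\Hor$. So the assumption that $\grG$ is source-connected is exactly source-connectedness of the kernel.

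\textbf{Step 3: conclusion.} Theorem~\ref{thm: complete for s-conn} now states that $\Hor$ is complete if and only if $\Hor_0$ is complete.

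Alternatively, one can avoid citing the theorem and argue directly. The direction ``$\Hor$ complete $\Rightarrow$ $\Hor_0$ complete'' is Corollary~\ref{coro:base_of_complete_is_loc_triv}, or simply Proposition~\ref{prp: cond MC}(a) applied to lifts through units. For the converse, completeness of $\Hor$ is equivalent to completeness of the lifts $\hor(X)$ of complete vector fields $X\in\mathfrak{X}(N)$. By Lemma~\ref{lema:multiplicative_lift} these lifts are multiplicative and cover $\hor_0(X)$, which is complete. The lemma preceding the proof of Theorem~\ref{thm: complete for s-conn} then gives completeness of $\hor(X)$ on the source-connected groupoid $\grG$.

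I see no real obstacle here. The only point needing care is the reduction from completeness of path lifts to completeness of lifted vector fields. One must handle arbitrary smooth paths $\gamma\colon I\to N$ rather than integral curves, for instance by passing to $N\times\bbR$ and the time-dependent vector field $\partial_t+\dot\gamma$, or by using that the horizontal lift of a path depends only on $\gamma$ locally. Since this reduction is already used verbatim in the proof of Theorem~\ref{thm: complete for s-conn}, the cleanest route is to cite that theorem via Steps~1 and~2.
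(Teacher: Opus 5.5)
Your proposal is correct and follows the paper, which obtains this corollary by specialising Theorem~\ref{thm: complete for s-conn} to the case $\grH = N\tto N$. Your Steps 1 and 2 are exactly the checks the paper leaves implicit: $\pi\times\grs$ identifies with $\grs$, so a family is a fibration, and the kernel is all of $\grG$ with $\Hor^{\grK}=\Hor$.
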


\subsection{Existence of complete connections}
A surjective submersion $\pi_0\colon M\to N$ is a locally trivial fibration if and only if it admits a complete Ehresmann connection. Somewhat surprisingly, this “classical” result was only recently given a correct proof by del Hoyo \cite{delHoyo2016}.

In this subsection, we discuss the relation between the existence of complete multiplicative connections and local triviality of a family of Lie groupoids, in the sense of Definition \ref{def:loc_triv}. Clearly, if a family admits a complete MEC, then its parallel transport yields local groupoid trivializations. Conversely, if $\pi$ is a locally trivial family, then, as seen in Proposition \ref{prp: LC imply MC}, it admits a MEC---which is, however,  not guaranteed to be complete.

The main result of this section, which generalizes \cite{delHoyo2016}, is the following criterion for the existence of complete connections on locally trivial bundles.
\begin{theorem}\label{thm: s-proper implies existence of multiplicative connection}
	A locally trivial family of Lie groupoids whose typical fibre is a source-proper Lie groupoid admits a complete MEC.
\end{theorem}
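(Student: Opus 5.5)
The plan is to adapt del Hoyo's argument \cite{delHoyo2016} for complete Ehresmann connections to the multiplicative setting. The key device in del Hoyo's proof is an auxiliary complete Riemannian metric on the total space. For the fibre bundle version, one builds a connection whose horizontal spaces are orthogonal complements of the vertical bundle for a metric that is complete along fibres and "locally uniform" in the base directions. Completeness then follows because horizontal lifts of paths of finite length stay in bounded, hence relatively compact, regions of the fibres. We replicate this with a multiplicative substitute for the metric. The source-properness of the typical fibre $\mathcal{F}\tto F$ is exactly what allows us to produce \emph{multiplicative} (i.e.\ $0$-)metrics in the sense of del Hoyo--Fernandes that are complete on the orbit directions.

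The first step treats the base. Since $\pi_0\colon M\to N$ is locally trivial with typical fibre $F$, del Hoyo's theorem provides complete Ehresmann connections on $\pi_0$. However, we need one compatible with the groupoid, so we instead work throughout with the local models $U^{\alpha}\times\mathcal{F}$.

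Over each trivialising chart $U^{\alpha}$, the product connection $\hor^{\alpha}$ of Proposition \ref{prp: LC imply MC} is multiplicative. We glue these not by an arbitrary partition of unity but by a convex combination chosen so that, over a locally finite cover by relatively compact sets, the "displacement" vector fields $\hor^{\alpha}-\hor^{\beta}$ (which are vertical, multiplicative vector fields on the fibres, i.e.\ infinitesimal automorphisms of $\mathcal{F}$) are controlled. Concretely, fix on $\mathcal{F}\tto F$ a proper function $\phi\colon F\to[0,\infty)$ that is invariant along orbits modulo a bound. We then use source-properness to get such a $\phi$ together with a $\grs$-proper exhaustion compatible with multiplication. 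The gluing coefficients are adjusted as in del Hoyo so that the glued horizontal lift $\hor(X)$ of any vector field $X$ satisfies $|\hor(X)(\phi)|\le C(1+\phi)$ locally uniformly in $N$.

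The second step uses Theorem \ref{thm: compl ker imply compl total for Lie grpd fib} and Theorem \ref{thm: complete for s-conn} to reduce what must be checked. A family is a fibration, and the kernel of $\pi$ is all of $\grG$, so there is no direct reduction. Instead we use the lemma preceding Theorem \ref{thm: complete for s-conn}: a multiplicative vector field is complete once its base vector field is complete, provided the groupoid is source-connected. In general the source fibres are compact but possibly disconnected. Here source-properness replaces connectivity. The flow domain $U$ of $\hor(X)$ at time $1$ is an open subgroupoid containing the units. For $g\in\grG$, the flow line of $g$ projects under $\grs$ to the complete flow line of $\hor_0(X)$ through $\grs(g)$. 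Along a compact time interval this flow line lies in the preimage under $\grs$ of a compact set, which is compact since $\grG$ is source-proper over compact subsets of $N$ by local triviality. So the integral curve cannot escape, and $\hor(X)$ is complete. Thus the problem reduces entirely to making $\Hor_0$ complete, while keeping $\Hor$ multiplicative.

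The main obstacle is therefore to construct a multiplicative $\Hor$ whose base $\Hor_0$ is complete. The base fibre $F$ is noncompact in general, and del Hoyo's estimate must be made compatible with the $\mathcal{F}$-structure. My first attempt is to average: start from a complete connection on $\pi_0$ obtained by del Hoyo's method with $\phi$ chosen $\mathcal{F}$-invariant. Such a $\phi$ exists because orbits of the source-proper $\mathcal{F}$ are compact and the orbit space is Hausdorff, so a proper function on $F/\mathcal{F}$ pulls back. Then lift via Theorem \ref{thm:prop_fam}'s averaging lemma, with $\grs$-fibres compact so the averaging is legitimate. Averaging preserves the bound $|X(\phi)|\le C(1+\phi)$, since $\phi$ is invariant and the averaged base field is a convex combination of $\grt$-images of vectors satisfying the same estimate. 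This yields completeness of $\Hor_0$ by Gronwall, and hence completeness of $\Hor$ by the second step.
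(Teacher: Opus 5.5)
There is a genuine gap. Two parts of your argument are correct. The second step holds: $\grG$ is source-proper, and the $\grs$-image of a horizontal lift is the $\Hor_0$-lift of the projected path. So the lift stays in $\grs^{-1}(K)$ for a compact $K$ and cannot escape, and any MEC with complete $\Hor_0$ is complete. Your check that averaging preserves a bound $|\hor_0(X)(\phi)|\le C(1+\phi)$ for $\grG$-invariant $\phi$ also holds, since $d\phi\circ T\grt=d\phi\circ T\grs$ on $\grG$.

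The gap is that you never produce a connection satisfying such a bound. By Gronwall, that bound already implies completeness, so it carries the whole difficulty of the theorem. You attribute it to del Hoyo, but his argument uses neither a complete metric nor any growth estimate. Gluing with base-dependent coefficients gives no such control either. The glued lift differs from $\hor^{\alpha}(X)$ by combinations of the vertical fields $\hor^{\beta}(X)-\hor^{\alpha}(X)$ coming from the transition maps, and these can grow arbitrarily fast along a non-compact fibre. Convex combinations of complete connections need not be complete. Moreover, your $\phi$ lives on $F$ and must be transported to $M$ chart by chart; these transports disagree on overlaps, so the estimate in your first step is not even well-posed. Starting from an arbitrary complete connection on $\pi_0$ does not help: the averaged base field $\int_x T\grt(\tilde X_h)\,dh$ is a new connection, and its completeness would follow only from the unproved bound.

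The missing idea is del Hoyo's \emph{barrier} mechanism, which the paper makes invariant.
\begin{enumerate}
\item Take an $\grF$-invariant proper function $f$ on $F$, as you do.
\item In each chart set $S^{\alpha}=f^{-1}(\{n(i,\alpha)\}_{i\in\mathbb{N}})$, choosing the integers inductively so that the closed, $\grG$-invariant sets $Z^{\alpha}=(\psi^{\alpha}_0)^{-1}(\overline{U}^{\alpha}\times S^{\alpha})$ are pairwise disjoint.
\item Glue the product MECs using $\chi^{\alpha}\circ\grs$, where $\{\chi^{\alpha}\}$ is a $\grG$-invariant partition of unity subordinate to $W^{\alpha}=\pi_0^{-1}(V^{\alpha})\setminus\bigcup_{\beta\neq\alpha}Z^{\beta}$. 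This is already a MEC, with no averaging needed, and it equals the $\alpha$-product connection over $\grs^{-1}(Z^{\alpha})$.
\end{enumerate}
Horizontal lifts therefore cannot cross the invariant sets $\grs^{-1}(S^{\alpha})$. By source-properness the connected components of $\grF\setminus\grs^{-1}(S^{\alpha})$ are precompact, so Lemma~\ref{lem: compl conn on fb} gives completeness.

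Your averaging route can be repaired along the same lines. Start from a base connection with these invariant barriers, and take $\Hor_{\grs}$ to be the product connection over $\grs^{-1}(Z^{\alpha})$. Averaging then keeps the lift equal to the product one there, and your second step finishes the proof. As written, however, the estimate-based argument does not prove the theorem.
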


The proof mimics and extends the techniques developed in \cite{delHoyo2016}. First, we give a geometric criterion for completeness of connections.

\begin{lemma}\label{lem: compl conn on fb}
	Let $p\colon A\to B$ be a locally trivial fibre bundle with typical fibre $F$, endowed with an Ehresmann connection $\Hor$. If there exists a trivialising cover
	\[
		\{\psi^{\alpha}\colon p^{-1}(U^{\alpha})\diffto U^{\alpha}\times F\}_{\alpha\in\Lambda}
	\]
	and, for each $\alpha\in \Lambda$, a closed subset $S^{\alpha}\subset F$ such that:
	\begin{enumerate}
		\item \label{thm: compl conn on fb: item: horizontal} $T\psi^{\alpha}(\Hor)|_{U^{\alpha}\times S^{\alpha}} =  TU^{\alpha}\times 0_{S^{\alpha}}$ and

		\item \label{thm: compl conn on fb: item: precmpct} the connected components of $F\backslash S^{\alpha}$ are precompact,
	\end{enumerate}
	then $\Hor$ is complete.
\end{lemma}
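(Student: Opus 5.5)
The key idea is to control horizontal lifts locally inside trivialising charts: condition (1) makes $\Hor$ look like the product connection on $U^{\alpha}\times S^{\alpha}$, and condition (2) confines any lift that stays off $S^{\alpha}$ to a compact set, where it cannot escape. By standard ODE theory, completeness is equivalent to: for every smooth path $\gamma\colon I\to B$ and every $a\in p^{-1}(\gamma(0))$, the maximal horizontal lift $\tau_{\gamma}^{t}(a)$, defined on $[0,t_{\mathrm{m}})$, extends past $t_{\mathrm{m}}$ whenever $t_{\mathrm{m}}\le 1$. Suppose it does not extend. By Lebesgue's number lemma, a final segment $\gamma([t_1,t_{\mathrm{m}}])$ lies in a single $U^{\alpha}$. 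Using $\psi^{\alpha}$, we then work in $U^{\alpha}\times F$ and write the lift as
\[
	\psi^{\alpha}(\tau_{\gamma}^{t}(a)) = (\gamma(t), f(t)),\qquad f\colon [t_1,t_{\mathrm{m}})\to F.
\]
The goal is to show that $f(t)$ stays in a compact subset of $F$ as $t\to t_{\mathrm{m}}$. Once that holds, the solution of the ODE extends, by the usual escape-from-compacts argument applied to the (smooth, locally Lipschitz) horizontal lift in the chart over a compact neighbourhood of $\gamma(t_{\mathrm{m}})$. This contradicts maximality.

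To get compactness we split into two cases, according to whether $f$ ever enters $S^{\alpha}$.

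First, suppose $f(t_2)\in S^{\alpha}$ for some $t_2\in[t_1,t_{\mathrm{m}})$. By condition (1), the path $t\mapsto (\gamma(t), f(t_2))$ has velocity in $TU^{\alpha}\times 0_{S^{\alpha}}$, which lies in $T\psi^{\alpha}(\Hor)$, so this path is horizontal. It is defined on all of $[t_2,1]$ for as long as $\gamma$ stays in $U^{\alpha}$, in particular up to and beyond $t_{\mathrm{m}}$. By uniqueness of horizontal lifts, $f(t)=f(t_2)$ on $[t_2,t_{\mathrm{m}})$, so the lift extends, a contradiction.

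Otherwise, $f([t_1,t_{\mathrm{m}}))\subset F\backslash S^{\alpha}$. Since $[t_1,t_{\mathrm{m}})$ is connected, $f$ stays in a single connected component $C$ of $F\backslash S^{\alpha}$, and $\overline{C}$ is compact by condition (2). To run the ODE extension argument we shrink to a compact neighbourhood $K\subset U^{\alpha}$ of $\gamma([t_1,t_{\mathrm{m}}])$. The lift then lives in the compact set $\psi^{\alpha,-1}(K\times\overline{C})$, and the horizontal vector field along $\gamma$ (a time-dependent vector field on $p^{-1}(K)$) is bounded there. Hence $\tau_{\gamma}^{t}(a)$ has a limit as $t\to t_{\mathrm{m}}$, and the local existence theorem restarts the lift from that limit point, again a contradiction. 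One has to be careful that the limit point may lie in $\partial C\subset S^{\alpha}$; that is harmless, since the ODE is defined on all of $p^{-1}(U^{\alpha})$.

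I expect the main obstacle to be the bookkeeping rather than any single step. One point is choosing $t_1$ so that $\gamma$ stays in $U^{\alpha}$ slightly beyond $t_{\mathrm{m}}$, which uses openness of $U^{\alpha}$ and the fact that $t_{\mathrm{m}}<1$ or $t_{\mathrm{m}}=1$ with the interval closed. The other is verifying the bounded-velocity/limit argument via the standard fact that an integral curve of a time-dependent vector field confined to a compact set extends. Hausdorffness of $F$ (assumed throughout) guarantees uniqueness in the first case.
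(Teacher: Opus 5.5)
Your proof is correct and follows essentially the paper's argument. In a trivialising chart, the fibre component of the lift is either constant once it meets $S^{\alpha}$ (by condition (1) and uniqueness of integral curves) or trapped in the compact closure of a single component of $F\backslash S^{\alpha}$ (by condition (2)), so it cannot blow up; the differences are only organisational (you argue by contradiction at the maximal time using one chart around $\gamma(t_{\mathrm{m}})$, while the paper lifts fully over each chart and concatenates along a finite subdivision of the path), and your extension step should be phrased, as in the paper, via the time-dependent vector field on $F$ given by the fibre component of the horizontal lift rather than as a vector field on $p^{-1}(K)$.
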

\begin{proof}
	Assume first that $A = B\times F$. Consider a connection $\Hor$ for the projection $\mathrm{pr}_{1}\colon B\times F\to B$, and a closed subset $S\subset F$ satisfying the above conditions. Let $\eta\colon I\to B$ be a curve. For any $x\in F$, the horizontal lift of $\eta$ starting at $(\eta(0),x)$ has the form $\tau^{t}_{\eta}(\eta(0),x) = (\eta(t),\xi(x,t))$. Note that $x\mapsto \xi(x,t)$ is the flow of the time dependent vector field $X(\cdot,t)$ on $F$ defined by the relation
	\[
		\hor_{(\eta(t),x)}\big(\tfrac{d}{dt}\eta(t)\big) = \big(\tfrac{d}{dt}\eta(t),X(x,t)\big).
	\]
	To show completeness of $\Hor$, we need to show that, for any $x\in F$, the flow line $t\mapsto \xi(x,t)$ of $X$ is defined on $I$. This follows as we are in either of two cases:
	\begin{itemize}
		\item If $x\in S$, then by (1) $X(x,t) = 0$ for all $t\in I$, so $t\mapsto \xi(x,t) = x$ is the flow line through $x$, which is defined on $I$.

		\item If $x \notin S$, then $\xi(x,t)$ stays within the connected component of $x$ in $F\backslash S$, denoted $\operatorname{Conn}(F\backslash S,x)$. Indeed, otherwise there is $t_0>0$ such that $\xi(x,t_0) = x_0\in S$, and so $t\mapsto x_0$ is a integral curve of $v$ that coincides with $\xi(x,t)$ at time $t = t_0$, therefore they must be equal, which yields the contradiction: $x = x_0\in S$. Hence, $\xi(x,t)$ must stay within the compact set $\overline{\operatorname{Conn}(F\backslash S,x)}$, which implies that it is defined on the whole of $I$.
	\end{itemize}

	The general case of a locally trivial fibre bundle follows by using the above argument along a finite cover of a path by local trivialisations.
\end{proof}

\begin{lemma}\label{prp: existence of compl MC}
	Let $\pi\colon (\grG\tto M)\to N$ be a locally trivial family of Lie groupoids with typical fibre a source-proper Lie groupoid $\grF\tto F$. Suppose that there exists
	\begin{itemize}
		\item an open cover by groupoid trivialisations $\{(V^{\alpha},\psi^{\alpha})\}_{\alpha\in\Lambda}$,
		      \[
			      \begin{tikzcd}
				      {\pi^{-1}(V^{\alpha})} & {V^{\alpha}\times \mathcal{F}} \\
				      {\pi_0^{-1}(V^{\alpha})} & {V^{\alpha}\times F}
				      \arrow["{\psi^\alpha}", "\sim"', from = 1-1, to = 1-2]
				      \arrow[shift left, from = 1-1, to = 2-1]
				      \arrow[shift right, from = 1-1, to = 2-1]
				      \arrow[shift left, from = 1-2, to = 2-2]
				      \arrow[shift right, from = 1-2, to = 2-2]
				      \arrow["{\psi^\alpha_0}", "\sim"', from = 2-1, to = 2-2]
			      \end{tikzcd},
			      \quad \textrm{with}\quad \mathrm{pr}_{1}\circ \psi^{\alpha} = \pi|_{\pi^{-1}(V^{\alpha})},
		      \]

		\item a locally finite open cover $\{U^{\alpha}\}_{\alpha\in \Lambda}$, with $\overline{U}^{\alpha}\subset V^{\alpha}$,

		\item and, for each $\alpha\in\Lambda$, a closed, $\grF$-invariant subset $S^{\alpha}\subset F$ such that
		      \begin{itemize}
			      \item the connected components of $F\backslash S^{\alpha}$ are precompact,

			      \item and, denoting $Z^{\alpha}: = (\psi_0^{\alpha})^{-1}(\overline{U}^{\alpha}\times S^{\alpha})$, such that
			            \[
				            Z^{\alpha}\cap Z^{\beta} =  \emptyset\qquad \textrm{for all}\quad \alpha\neq \beta\in \Lambda.
			            \]
		      \end{itemize}
	\end{itemize}
	Then $\pi$ admits a complete MEC.
\end{lemma}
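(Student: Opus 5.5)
We glue local product connections along the partition of unity, following the proof of Proposition~\ref{prp: LC imply MC}, but arrange that only one chart contributes near each $Z^{\alpha}$; completeness then follows from Lemma~\ref{lem: compl conn on fb}.

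\textbf{Step 1: the partition of unity.} For each $\alpha$, let $\hor^{\alpha}(x,u) := (T_{x}\psi^{\alpha})^{-1}(u,0)$ be the product MEC on $\pi^{-1}(V^{\alpha})$. Since $\psi^{\alpha}$ is a groupoid isomorphism, it is multiplicative. The sets $Z^{\alpha}$ are closed in $M$: $\overline{U}^{\alpha}\times S^{\alpha}$ is closed in $V^{\alpha}\times F$, and the family $\{\overline U^{\alpha}\}$ is locally finite. They are pairwise disjoint, and the family $\{Z^{\alpha}\}$ is locally finite. We cannot directly choose a partition of unity on $N$ adapted to the $Z^{\alpha}$, because these sets live in $M$. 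Instead we work on $M$. Choose a partition of unity $\{\chi^{\alpha}\}$ on $M$, subordinate to $\{\pi_0^{-1}(V^{\alpha})\}$, such that $\chi^{\alpha}\equiv 1$ on a neighbourhood of $Z^{\alpha}$ (hence $\chi^{\beta}\equiv 0$ there for $\beta\neq\alpha$). Such a partition exists by local finiteness and disjointness: first pick pairwise disjoint neighbourhoods $W^{\alpha}\supset Z^{\alpha}$ inside $\pi_0^{-1}(V^{\alpha})$, then modify a standard partition of unity.

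\textbf{Step 2: the glued connection is a MEC.} The gluing $\hor := \sum_{\alpha}\chi^{\alpha}\cdot\hor^{\alpha}$ must remain multiplicative, which requires the functions to be biinvariant, i.e.\ $\chi^{\alpha}\circ\grs = \chi^{\alpha}\circ\grt$. So we require each $\chi^{\alpha}$ to be constant on orbits of $\grG$, i.e.\ a basic function on $M$, and we use $\chi^{\alpha}\circ\grs$ on $\grG$. Producing such basic partitions of unity is the main obstacle. Source-properness of $\grF$ makes each $\grG|_{\pi^{-1}(V^{\alpha})}$ proper. Orbits lie in fibres of $\pi_0$, and $Z^{\alpha}$ is $\grG$-invariant because $S^{\alpha}$ is $\grF$-invariant. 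Hence $\grG$ is proper, its orbit space $M/\grG$ is Hausdorff and paracompact, and the $Z^{\alpha}$ descend to disjoint closed sets there. We would therefore construct the partition of unity on $M/\grG$ (smooth functions on the orbit space are available for proper groupoids), or alternatively average an ordinary one using the averaging operators \eqref{eq:averaging}. Averaging preserves both the support conditions and the condition of being $1$ near the invariant set $Z^{\alpha}$. With biinvariant coefficients summing to $1$, $\hor$ is a \VB-groupoid morphism lifting $\mathrm{id}$, exactly as in Proposition~\ref{prp: LC imply MC}. Alternatively, one checks via Lemma~\ref{lema:multiplicative_lift} that a biinvariant convex combination of multiplicative vector fields projecting to $X$ is multiplicative.

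\textbf{Step 3: completeness.} We apply Lemma~\ref{lem: compl conn on fb} to $\pi$ with the trivialising cover $\{\psi^{\alpha}|_{\pi^{-1}(U^{\alpha})}\}$ and closed sets $\tilde S^{\alpha} := \grs^{-1}(S^{\alpha})\subset\grF$. Over $U^{\alpha}\times S^{\alpha}$ we have $\chi^{\alpha}=1$, so $\Hor$ coincides with the product connection $\hor^{\alpha}$ on $\psi^{\alpha}$-preimages of $U^{\alpha}\times\tilde S^{\alpha}$, which is condition (1). For condition (2), invariance of $S^{\alpha}$ gives $\grF\setminus\tilde S^{\alpha} = \grs^{-1}(F\setminus S^{\alpha})$. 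Each connected component of this set lies in $\grs^{-1}(C)$ for some component $C$ of $F\setminus S^{\alpha}$, and $\grs^{-1}(\overline C)$ is compact by source-properness. Hence the components of $\grF\setminus\tilde S^{\alpha}$ are precompact, and Lemma~\ref{lem: compl conn on fb} gives completeness of $\Hor$.
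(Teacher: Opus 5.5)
Your proposal is correct and follows the paper's proof. You glue the product MECs $\hor^{\alpha}$ with a $\grG$-invariant partition of unity pulled back by $\grs$, so that only chart $\alpha$ contributes over $Z^{\alpha}$, and then verify the two hypotheses of Lemma~\ref{lem: compl conn on fb} for the closed sets $\grs^{-1}(S^{\alpha})\subset\grF$, exactly as you do. The only difference is in how the invariant partition of unity is produced: the paper takes it (using properness of $\grG$) subordinate to the invariant open cover $W^{\alpha}:=\pi_0^{-1}(V^{\alpha})\setminus\bigcup_{\beta\neq\alpha}Z^{\beta}$, which forces $\chi^{\beta}|_{Z^{\alpha}}=0$ for $\beta\neq\alpha$ and so makes your disjoint neighbourhoods, the ``$1$ near $Z^{\alpha}$'' condition and the averaging step unnecessary.
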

\begin{proof}
	Clearly also $\{\overline{U}^{\alpha}\}_{\alpha\in \Lambda}$ is locally finite. Then $\{Z^{\alpha}\}_{\alpha\in \Lambda}$ is a locally finite family of $\grG$-invariant, disjoint, closed subsets in $M$. This implies that the subsets
	\[
		W^{\alpha}: =  \pi_0^{-1}(V^{\alpha})\backslash \bigcup_{\beta\neq\alpha}Z^{\beta}
	\]
	are open \cite{Munkres2000}*{Lemma 39.1 (c)}. Additionally, the family $\{W^{\alpha}\}_{\alpha\in \Lambda}$ covers $M$:
	\begin{itemize}
		\item if $x\in M\backslash \bigcup_{\beta}Z^{\beta}$, since $x$ lies in $\pi_0^{-1}(V^{\alpha})$ for some $\alpha$, we have that $x\in W^{\alpha}$;

		\item since $Z^{\alpha}\subset W^{\alpha}$, we have that $\bigcup_{\beta\in \Lambda}Z^{\beta}\subset \bigcup_{\beta\in \Lambda}W^{\beta}$.
	\end{itemize}
	Clearly, each $W^{\alpha}$ is $\grG$-invariant.

	Since $\grG\tto M$ is a proper Lie groupoid, by \cite{Crainic2017}*{Proposition 8} there exists a $\grG$-invariant partition of unity $\{\chi^{\alpha}\}_{\alpha\in \Lambda}$ on $M$ subordinate to the cover $\{W^{\alpha}\}_{\alpha\in \Lambda}$.

	For each $\alpha$, the trivialization induces a MEC on $\pi^{-1}(V^{\alpha})$ with horizontal lift
	\[
		\hor^{\alpha}\colon \pi^{*}TV^{\alpha}\to T\pi^{-1}(V^{\alpha}),\qquad \hor_{g}^{\alpha}(v) = (T_{g}\psi^{\alpha})^{-1}(v,0).
	\]
	We glue these connections using the partition of unity $\{\chi^{\alpha}\circ \grs\}_{\alpha\in \Lambda}$, which is subordinate to the cover $\{\pi^{-1}(V^{\alpha})\}_{\alpha\in \Lambda}$, and obtain a connection $\Hor$ for $\pi\colon \grG\to N$ with horizontal lift
	\[
		\hor\colon \pi^{*}TN\to T\grG,\qquad \hor = \sum_{\alpha\in \Lambda}\chi^{\alpha}\circ \grs\cdot \hor^{\alpha}.
	\]
	Since the partition of unity is $\grG$-invariant, it follows that $\Hor$ is multiplicative.

	To prove completeness of $\Hor$, we check the two assumptions of Lemma \ref{lem: compl conn on fb} for the closed subsets $\grs^{-1}(S^{\alpha})\subset \grF$.
	\begin{enumerate}
		\item Let $g\in (\psi^{\alpha})^{-1}(U^{\alpha}\times \grs^{-1}(S^{\alpha}))$. Then $\grs(g)\in Z^{\alpha}$, and so $\grs(g)\notin W^{\beta}$, for all $\beta\neq\alpha$. Therefore $\chi^{\beta}(\grs(g)) = 0$, for all $\beta\neq \alpha$. This implies that $\hor_{g} = \hor^{\alpha}_{g}$. This is equivalent to the second condition.

		\item A connected component $C$ of $\grF\backslash \grs^{-1}(S^{\alpha})$ is sent by $\grs$ into a connected component $C_0$ of $F\backslash S^{\alpha}$. Now $\grs$ is proper, and $C_0$ is precompact, so $C$ is also precompact. Hence, the first condition holds. \qedhere
	\end{enumerate}
\end{proof}

In \cite{delHoyo2016}, the subsets $S^{\alpha}$ were constructed by using an exhaustion function. We will use such a function, which is invariant, as provided by the following result.

\begin{proposition}\label{prp: s-proper = invariant exhaustion}
	Let $\grF\tto F$ be a source-proper Lie groupoid. There exists a smooth, proper function $f:F\to [0,\infty)$ which is $\grF$-invariant, i.e., $\grs^{*}f = \grt^{*}f$.
\end{proposition}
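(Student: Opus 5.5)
The plan is to pass to the orbit space. Source-properness of $\grF\tto F$ implies properness: the map $(\grt,\grs)\colon\grF\to F\times F$ is proper, because the preimage of a compact $K_1\times K_2$ sits inside $\grs^{-1}(K_2)$, which is compact, and it is closed. So $\grF$ is proper, and its orbit space $F/\grF$ is Hausdorff, second countable and locally compact. The quotient map $q\colon F\to F/\grF$ is open and continuous.

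\textbf{Orbits are compact.} The orbit through $x$ is $\grt(\grs^{-1}(x))$, and $\grs^{-1}(x)$ is compact. More generally, the saturation $q^{-1}(q(K))=\grt(\grs^{-1}(K))$ of a compact set $K$ is compact. Hence $q$ is a proper map. To check this, let $C\subset F/\grF$ be compact. Cover $C$ by finitely many sets $q(\operatorname{int}K_i)$ with $K_i$ compact, which is possible since $q$ is open and $F$ is locally compact. Then $q^{-1}(C)$ is a closed subset of $\bigcup_i \grt(\grs^{-1}(K_i))$, which is compact.

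\textbf{Building $f$.} Using a $\grF$-invariant partition of unity, as in \cite{Crainic2017}*{Proposition 8} and already used above, pick a countable, locally finite cover of $F$ by saturated open sets $W_n=q^{-1}(q(\operatorname{int}K_n))$ with $K_n$ compact. Each $\overline{W}_n\subset\grt(\grs^{-1}(K_n))$ is compact. Take an invariant partition of unity $\{\chi_n\}$ subordinate to $\{W_n\}$ and set
\[
f:=\sum_{n}n\,\chi_n .
\]
This sum is locally finite, so $f$ is smooth. It is invariant because each $\chi_n$ is, and it is nonnegative.

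\textbf{Properness of $f$.} On the set $\{f\le c\}$, some $\chi_n$ with $n\le c$ must be nonzero at each point. Otherwise $f\ge \sum_{n>c}n\chi_n> c$, since the $\chi_n$ sum to $1$. Hence $\{f\le c\}\subset\bigcup_{n\le c}\overline{W}_n$. This is a finite union of compact sets, so $\{f\le c\}$ is a closed subset of a compact set and is compact. Thus $f$ is proper.

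\textbf{Expected obstacle.} The main care point is producing the cover $\{W_n\}$ with compact closures and the invariant partition of unity. If one prefers to avoid citing Proposition 8 of \cite{Crainic2017}, an alternative is averaging. Take any proper exhaustion $f_0\colon F\to[0,\infty)$ and define
\[
f(x)=\int_x f_0(\grs(h))\,dh,
\]
using the averaging operators \eqref{eq:averaging} along $\grt^{-1}(x)$. This is smooth, and it is invariant by left-invariance. Properness needs an estimate of the form $f(x)\ge\min_{\grt^{-1}(x)}f_0\circ\grs$, which holds because the measure is normalized. The minimum of $f_0$ over the orbit of $x$ tends to infinity as $x$ leaves compact sets, because $q$ is proper. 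This gives a second route, and the properness of $q$ is the key input in either approach.
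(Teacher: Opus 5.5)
Your proof is correct, and it runs on the same mechanism as the paper's. Source-properness enters only through the compactness of saturations $\grt(\grs^{-1}(K))$ of compact sets $K$, i.e.\ through properness of $q\colon F\to F/\grF$. That is the paper's Lemma~\ref{lem: s-proper gives proper quotient}, which you reprove essentially verbatim.

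The difference lies in where the function comes from. The paper takes a proper function $f_0$ on the orbit space with smooth lift from \cite{Crainic2017}*{Proposition~9} and sets $f=f_0\circ q$, so properness of $f$ is just a composition of proper maps. You instead build $f=\sum_n n\chi_n$ by hand from an invariant partition of unity (Proposition~8 of the same reference). In effect you reprove the needed case of Proposition~9 directly on $F$. In that route you never use properness of $q$ itself, only that $\overline{W}_n\subset\grt(\grs^{-1}(K_n))$ is compact. Local finiteness of the cover $\{W_n\}$ is also unnecessary, since the supports of a partition of unity are locally finite by definition.

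Your averaging alternative is a genuinely different route, and arguably the most economical one in the context of this paper. It uses only the averaging operators \eqref{eq:averaging} already introduced for Theorem~\ref{thm:prop_fam}, and needs no smooth structure on the orbit space. Invariance follows from left-invariance applied to $h\mapsto f_0(\grs(h))$, since $f_0(\grs(gh))=f_0(\grs(h))$. Properness follows from $\{f\le c\}\subset\grt\big(\grs^{-1}(f_0^{-1}([0,c]))\big)$. The one point to add is that the bound $f(x)\ge\min_{\grt^{-1}(x)}f_0\circ\grs$ needs the averaging measures to be positive, not merely normalized. Positivity holds for the standard construction from a Haar system and a cut-off function, but it is not among the properties the paper lists.
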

\begin{proof}
	Let $q\colon F\to F/\grF$ be the quotient map to the orbit space. By \cite{Crainic2017}*{Proposition~9}, $F/\grF$ admits a proper map $f_0\colon F/\grF\to[0,\infty)$ whose lift $f = f_0\circ q$ is smooth. Invariance of $f$ holds by construction; properness by the following.
\end{proof}

\begin{lemma}\label{lem: s-proper gives proper quotient}
	For any source-proper Lie groupoid $\grF\tto F$, the quotient map to the orbit-space $q\colon F\to F/\grF$ is proper.
\end{lemma}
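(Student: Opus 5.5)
To prove that $q\colon F\to F/\grF$ is proper, I will show directly that $q^{-1}(K)$ is compact for every compact $K\subset F/\grF$. Properness here means exactly that preimages of compact sets are compact. The orbit space carries the quotient topology, and $q$ is open, since the saturation of an open set $U$ is $\grt(\grs^{-1}(U))$ and $\grt$ is a submersion.

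The first step is to cover $K$ by finitely many images of compact sets. For each point $[x]\in K$, I choose a compact neighbourhood $C_x$ of $x$ in $F$. Its interior maps onto an open set $q(\mathrm{int}\,C_x)$ containing $[x]$, because $q$ is open. By compactness of $K$, finitely many of these open sets cover $K$. Setting $C:=C_{x_1}\cup\dots\cup C_{x_n}$, I obtain a compact set $C\subset F$ with $K\subset q(C)$.

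The second step is to control the saturation of $C$. I have
\[
	q^{-1}(K)\subset q^{-1}(q(C)) = \grt(\grs^{-1}(C)).
\]
Since $\grs$ is proper, $\grs^{-1}(C)$ is compact, and its image under the continuous map $\grt$ is compact. Thus $q^{-1}(K)$ lies inside a compact subset of $F$.

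It remains to show that $q^{-1}(K)$ is closed, which holds because $q$ is continuous and $K$ is closed. This is where the argument needs care: $K$ is closed if $F/\grF$ is Hausdorff. Source-proper groupoids are proper, since $(\grs,\grt)^{-1}$ of a compact set lies in $\grs^{-1}$ of a compact set, so the orbit space is Hausdorff. Alternatively, if one wants to avoid Hausdorffness, I would argue directly with sequences. Take $x_n\in q^{-1}(K)$; by compactness of $\grt(\grs^{-1}(C))$ it has a convergent subsequence $x_n\to x$. Write $x_n=\grt(g_n)$ with $\grs(g_n)\in C$; properness of $\grs$ gives $g_n\to g$, hence $x\in\grt(\grs^{-1}(C))$. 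I still need $[x]\in K$, and this follows from closedness of $K$, which again uses Hausdorffness of the orbit space. So I will simply invoke that proper groupoids have Hausdorff orbit spaces. A closed subset of a compact set is compact, which completes the proof.
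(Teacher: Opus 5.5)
Your proof is correct and follows essentially the same route as the paper. Both arguments place $q^{-1}(K)$ inside the saturation $\grt(\grs^{-1}(\cdot))$ of a (pre)compact set, which is (pre)compact by source-properness. The paper first builds invariant precompact neighbourhoods of each compact orbit and then takes a finite subcover, whereas you take the finite subcover first and saturate afterwards. You also make explicit a point the paper uses tacitly when it calls $q^{-1}(K)$ closed: source-proper implies proper, so $F/\grF$ is Hausdorff and $K$ is closed.
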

\begin{proof}
	First remark that the orbit of any $x\in F$ is compact, as it is given by $\scrO_{x} =  \grt(\grs^{-1}(x))$. Next, note that any orbit $\scrO_{x}$ has pre-compact open neighbourhoods: indeed, if $\tilde{U}$ is a precompact neighbourhood of $\scrO_{x}$, then its saturation $U = \grt(\grs^{-1}(\tilde{U}))$ is an invariant precompact neighbourhood.

	Consider a compact subset $K\subset F/\grF$. For any $\scrO\in K$ pick a precompact, invariant neighbourhood $\scrO\subset U_{\scrO}\subset M$. The open cover $\{U_{\scrO}/\grF\}_{\scrO\in K}$ admits a a finite open subcover $\{U_{\scrO_i}/\grG\}_{i = 1}^{n}$. Then the closed set $q^{-1}(K)$ admits the precompact neighbourhood $U: = \cup_{i = 1}^{n}U_{\scrO_i}$, and therefore it is compact.
\end{proof}

We are now ready to conclude:
\begin{proof}[Proof of Theorem \ref{thm: s-proper implies existence of multiplicative connection}]
	Let $\grF\tto F$ be the typical fibre of $\pi\colon \grG\to N$. Consider a function $f:F\to [0,\infty)$ as in Proposition~\ref{prp: s-proper = invariant exhaustion}.

	We will construct now all the ingredients needed in Lemma \ref{prp: existence of compl MC}.

	Consider a locally finite groupoid trivialising cover for $\pi\colon \grG\to N$ made of precompact subsets $\{(V^{\alpha},\psi^{\alpha})\}_{\alpha\in\Lambda}$, and $\{U^{\alpha}\}_{\alpha\in\Lambda}$ a second open cover, with $\overline{U}^{\alpha}\subset V^{\alpha}$.

	The sets $S^{\alpha}$ will be constructed of the form:
	\[
		S^{\alpha}: = f^{-1}\big(\{n(i,\alpha)\, |\, i\in \mathbb{N}\}\big),
	\]
	where $i\mapsto n(i,\alpha)\in \mathbb{N}$ is an increasing sequence of integers. This ensures that each $S^{\alpha}$ is $\grF$-invariant. Moreover, for any $x\in F$, there is $i\in \mathbb{N}$ such that $f(x)< n(i,\alpha)$. Then the connected component of $x$ in $F\backslash S^{\alpha}$ is included in $f^{-1}([0,n(i,\alpha)])$, and hence, by properness of $f$, it is precompact.

	To ensure the last condition in Lemma \ref{prp: existence of compl MC}, we construct $n(i,\alpha)$ so that the sets
	\[
		Z^{i,\alpha}: = (\psi^{\alpha}_0)^{-1}\big( \overline{U}^{\alpha}\times f^{-1}(n(i,\alpha))\big)\subset \pi_0^{-1}(\overline{U}^{\alpha})
	\]
	are pairwise disjoint, i.e., for $(i,\alpha)\neq (j,\beta)$,
	\begin{equation}\label{eq:disjoint}
		Z^{i,\alpha}\cap Z^{j,\beta} = \emptyset.
	\end{equation}
	Since $Z^{\alpha} = \bigcup_{i\in \mathbb{N}}Z^{i,\alpha}$, the required condition follows: $Z^{\alpha}\cap Z^{\beta} = \emptyset$, for $\alpha\neq \beta$.

	Without loss of generality, we can assume that $\Lambda = \bbN$ or $\Lambda = \{0,\ldots,N\}$. We consider the lexicographic ordering on $\bbN\times\Lambda$: $(j,\beta)\leq (i,\alpha)$ if and only if $j < i$, or $j = i$ and $\beta\leq \alpha$. We use induction with respect to this well-ordering to construct the map $n\colon\bbN\times\Lambda\to\bbN$. Set $n(0,0) = 0$. Suppose that $n$ is defined on all $(j,\beta)$, with $(j,\beta)<(i,\alpha)$. Define $n(i,\alpha)$ so that
	\[
		n(i,\alpha)> \sup \big\{f\circ \mathrm{pr}_{2}\circ \psi_0^{\alpha}(x)\, |\, x\in \pi_0^{-1}(\overline{U}^{\alpha})\cap Z^{j,\beta}, \ (j,\beta)<(i,\alpha)\big\}.
	\]
	We note that the right-hand side is indeed bounded. Indeed, $\{\overline{U}^{\gamma}\}_{\gamma\in\Lambda}$ is a locally finite collection of compact sets, so only finitely many $\beta\in\Lambda$ satisfy $\overline{U}^{\alpha}\cap \overline{U}^{\beta}\neq\emptyset$, and so only finitely many pairs $(j,\beta)$ with $(j,\beta)<(i,\alpha)$ satisfy $\pi_0^{-1}(\overline{U}^{\alpha})\cap Z^{j,\beta}\neq \emptyset$. Hence, the supremum is taken over a compact set.

	The choice of $n(i,\alpha)$ ensures \eqref{eq:disjoint}. Indeed, if $y\in Z^{i,\alpha}\cap Z^{j,\beta}$, we obtain
	\[
		f\circ \mathrm{pr}_{2}\circ \psi_0^{\alpha}(y) = n(i,\alpha)>\mathrm{sup}\{f\circ \mathrm{pr}_{2}\circ \psi_0^{\alpha}(x)\,|\, x\in \pi_0^{-1}(\overline{U}^{\alpha})\cap Z^{j,\beta}\}\geq f\circ \mathrm{pr}_{2}\circ \psi_0^{\alpha}(y),
	\]
	which is a contradiction.

	Theorem \ref{thm: s-proper implies existence of multiplicative connection} follows now by applying Lemma \ref{prp: existence of compl MC}.
\end{proof}

It would be interesting to understand whether source-properness is indeed a necessary condition for the existence of complete multiplicative connections.

\begin{question}
	Does every locally trivial family of Lie groupoids admit a complete multiplicative Ehresmann connection?
\end{question}

\appendix

\section{Lie groupoid fibrations}\label{app: Lgrpd fib}
Lie groupoid fibrations (recall Definition \ref{def:grp_fibr}) were studied before; for the non-smooth setting see \cites{Brown1970, Chen2020}, and for the smooth setting see \cites{Mackenzie2005, delHoyo2019}. In this appendix, we provide simpler criteria for a functor to be a Lie groupoid fibration. We apply these to show that conclude that uniform Lie groupoid fibrations are indeed Lie groupoid fibrations.

\begin{theorem}\label{thm:weaking_fibration}
	Let $\pi$ be a Lie groupoid morphism $\pi\colon \grG \to \grH$ with base map $\pi_0\colon M\to N$, and denote
	\[
		\pi^{!}: = \pi\times \grs\colon\grG \to \grH\fp{s}{\pi_0}M.
	\]
	Any of the following conditions is equivalent to $\pi$ being a Lie groupoid fibration.
	\begin{enumerate}
		\item\label{item: weaking_fibration; full} The maps $\pi_0$ and $\pi^{!}$ are surjective submersions.

		\item\label{item: weaking_fibration; shriek-sur} The map $\pi$ is a surjective submersion and the map $\pi^{!}$ is surjective.

		\item\label{item: weaking_fibration; star-sur} The map $\pi$ is a surjective submersion, and it restricts to a surjective map between source-fibres, i.e., $\pi\colon \grs^{-1}(x)\to \grs^{-1}(\pi_0(x))$ is surjective for any $x\in M$.
	\end{enumerate}
\end{theorem}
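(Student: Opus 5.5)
We prove the implications fibration $\Rightarrow$ (1) $\Rightarrow$ (2) $\Rightarrow$ (3) $\Rightarrow$ fibration. Here ``fibration'' means Definition~\ref{def:grp_fibr}: $\pi$ is a surjective submersion and $\pi^{!}$ is a surjective submersion.

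The first three implications are immediate.
\begin{itemize}
\item Fibration $\Rightarrow$ (1): $\pi_0 = \grs\circ \gru$ composed appropriately, or more directly $\pi_0\circ\grs = \grs\circ\pi$ with $\grs$ having the section $\gru$, so $\pi_0$ is a surjective submersion.
\item (1) $\Rightarrow$ (2): $\pi = \mathrm{pr}_1\circ\pi^{!}$, and $\mathrm{pr}_1\colon \grH\fp{\grs}{\pi_0}M\to\grH$ is a surjective submersion because it is the pullback of the surjective submersion $\pi_0$. So $\pi$ is a surjective submersion.
\item (2) $\Rightarrow$ (3): surjectivity of $\pi^{!}$ says exactly that every $h$ with $\grs(h)=\pi_0(x)$ has a preimage in $\grs^{-1}(x)$.
\end{itemize}

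The real content is (3) $\Rightarrow$ fibration, where we must show that $\pi^{!}$ is a submersion. Surjectivity of $\pi^{!}$ is just a reformulation of the fibrewise surjectivity in (3), and $\pi_0$ is a surjective submersion as above.

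For the submersion property, fix $g\in\grG$ with $x=\grs(g)$. Use right translation $R_g$, a diffeomorphism $\grs^{-1}(\grt(g))\to\grs^{-1}(x)$ intertwined by $\pi$ with $R_{\pi(g)}$, together with the unit section.

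\begin{enumerate}
\item \emph{At units.} Since $\pi$ is a submersion at $1_y$, its restriction to $\grs$-fibres satisfies: the image of $T_{1_y}(\grs^{-1}(y))$ under $T\pi$ plus $T\pi(T\gru(T_yM))$ is all of $T_{1_{\pi_0(y)}}\grH$. Decompose $T_{1_y}\grG=T_{1_y}\grs^{-1}(y)\oplus T\gru(T_yM)$ and $T_{1}\grH=T_1\grs^{-1}\oplus T\gru(TN)$. Then $T\pi$ is block triangular: units go to units and $\grs$-fibres go to $\grs$-fibres. Projecting onto the $\grs$-fibre component, we get that $\pi|_{\grs^{-1}(y)}$ is a submersion at $1_y$.
\item \emph{Translation.} By right translation, $\pi|_{\grs^{-1}(x)}\colon\grs^{-1}(x)\to\grs^{-1}(\pi_0(x))$ is then a submersion at every point of the form $g=R_g(1_{\grt(g)})$, since $\pi(kg)=\pi(k)\pi(g)$.
\item \emph{Conclusion.} We now have fibrewise submersivity of $\pi$ on $\grs$-fibres over the map $\pi_0$, which is a submersion on the base. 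A tangent-space count in the fibre product $\grH\fp{\grs}{\pi_0}M$ then shows that $T\pi^{!}$ is onto. Concretely, given $(u,w)$ with $T\grs(u)=T\pi_0(w)$, first lift $w$ to $v_1\in T_g\grG$ with $T\grs(v_1)=w$. Then $u-T\pi(v_1)$ is $\grs$-vertical in $\grH$, and we lift it by the fibrewise submersion to a $\grs$-vertical $v_2$. The vector $v_1+v_2$ maps to $(u,w)$.
\end{enumerate}

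The main obstacle is step 1, the block-triangular argument at the units. Step 2 needs care only in checking that right translation is compatible with $\pi$. Surjectivity from (3) is never used for the submersion part, so condition (3)'s hypothesis on $\pi$ carries everything.
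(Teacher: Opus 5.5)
Your proof is correct, but for the key step (showing that $\pi^{!}$ is a submersion as soon as $\pi$ is) you take a different route from the paper.

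The paper invokes Proposition~\ref{prp: cond VBsubgrpd} to know that $\ker T\pi\tto\ker T\pi_0$ is a \VB-subgroupoid. This gives that $T_g\grs\colon\ker T_g\pi\to\ker T_{\grs(g)}\pi_0$ is onto. It then applies Lemma~\ref{lem: sur in vspb} with $F=T_g\pi$: lift the $\grH$-component first, then correct by a $\pi$-vertical vector.

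You use the transposed version. You show directly that $T_g\pi$ maps $\ker T_g\grs$ onto $\ker T_{\pi(g)}\grs$, i.e.\ that $\pi$ restricts to submersions between source fibres. You get this from the splitting $T\grG|_M=\ker T\grs\oplus T\gru(TM)$, followed by right translation. Note that $T\pi$ is block \emph{diagonal} with respect to this splitting, not merely triangular; both off-diagonal blocks vanish, and that is exactly what your projection argument needs. You then lift the $M$-component first through $T\grs$ and correct by an $\grs$-vertical vector.

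What each route buys:
\begin{itemize}
\item Your argument is elementary and self-contained. It avoids the appendix on \VB-subgroupoids, and it shows in addition that under (3) the maps $\grs^{-1}(x)\to\grs^{-1}(\pi_0(x))$ are surjective submersions.
\item The paper's version is shorter given Proposition~\ref{prp: cond VBsubgrpd}, which it needs anyway for the short exact sequence underlying MECs. That proposition's proof (Step 3, right translation by zero vectors $0_g$) contains essentially your unit-then-translate mechanism, phrased in \VB-groupoid language.
\end{itemize}

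Arranging the equivalences as a single cycle, rather than the paper's separate equivalences, is a harmless difference.
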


In the proof, we will use some auxiliary results.

\begin{lemma}\label{lem: sur_sum bs}
	Suppose $f\colon X\to Y$ is a surjective submersion. Then for any smooth map $g\colon Z\to Y$, the base change map $\widehat{f}\colon Z\fp{g}{f}X\to Z$ is a surjective submersion.
\end{lemma}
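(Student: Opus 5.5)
We verify surjectivity and the submersion property separately, working with the set-theoretic fibre product $Z\fp{g}{f}X=\{(z,x)\mid g(z)=f(x)\}$. The first step is to recall that, since $f$ is a submersion, $g$ and $f$ are transverse. Hence $Z\fp{g}{f}X$ is an embedded submanifold of $Z\times X$, with tangent space
\[
	T_{(z,x)}(Z\fp{g}{f}X)=\{(u,v)\in T_zZ\times T_xX\mid Tg(u)=Tf(v)\}.
\]
The base change map $\widehat{f}$ is the restriction of $\mathrm{pr}_1$, so it is smooth.

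For surjectivity, take $z\in Z$. Since $f$ is surjective, there is $x\in X$ with $f(x)=g(z)$, and then $(z,x)$ lies in the fibre product and maps to $z$.

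For the submersion property, fix $(z,x)$ in the fibre product and $u\in T_zZ$. Since $T_xf$ is surjective, we can choose $v\in T_xX$ with $T_xf(v)=T_zg(u)$. Then $(u,v)$ is tangent to the fibre product by the description above, and $T\widehat{f}(u,v)=u$. Hence $T_{(z,x)}\widehat{f}$ is surjective.

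The only point needing care is the identification of the tangent space of the fibre product with the kernel of $Tg-Tf$. This is the standard consequence of transversality: the fibre product is the preimage of the diagonal $\Delta_Y$ under $g\times f$, and $g\times f$ is transverse to $\Delta_Y$ because $Tf$ alone already spans $TY$. With that in place, the rest is immediate.
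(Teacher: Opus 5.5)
Your proof is correct, but it argues infinitesimally where the paper uses local sections. The paper takes $x\in f^{-1}(g(z))$ and a local section $\sigma\colon U\to X$ of $f$ with $\sigma(g(z))=x$. It then observes that $\widehat{\sigma}(w)=(w,\sigma(g(w)))$ is a smooth local section of $\widehat{f}$ on $g^{-1}(U)$ through $(z,x)$, and a map with local sections through every point is a submersion. You instead identify $T_{(z,x)}(Z\fp{g}{f}X)$ with $\{(u,v)\mid T_zg(u)=T_xf(v)\}$ via transversality of $g\times f$ to $\Delta_Y$, and lift each $u$ by hand.

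The two arguments are the same at different levels. Differentiating $\widehat{\sigma}$ at $z$ gives $u\mapsto (u,T\sigma(T_zg(u)))$, which is one particular choice of your $v$. The paper's version never computes the tangent space of the fibre product. It only needs the fibre product to be an embedded submanifold of $Z\times X$, so that $\widehat{\sigma}$ is smooth as a map into it, and it gives local sections directly. Your version makes explicit, through transversality, why the fibre product is a manifold at all, which the paper takes for granted. The cost is relying on the tangent-space formula for transverse preimages.
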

\begin{proof}
	Let $z\in Z$, and let $y: = g(z)$. Since $f$ is surjective, there exists $x\in X$ such that $f(x) = y$. In other words $(z,x)\in Z\fp{g}{f}X$ and $\widehat{f}(z,x) = z$. So $\widehat{f}$ is surjective. Moreover, since $f$ is a submersion, for any $x\in f^{-1}(y)$ we find a smooth local section $\sigma:U\to X$ of $f$, i.e., $f\circ \sigma = \mathrm{id}_{U}$, such that $\sigma(y) = x$. Then the map
	\[
		\widehat{\sigma}\colon g^{-1}(U)\to Z\fp{g}{f}X,\qquad w\mapsto (w,\sigma(g(w)))
	\]
	is a smooth local section of $\widehat{f}$ with $\widehat{\sigma}(z) = (z,x)$. Hence $\widehat{f}$ is a submersion.
\end{proof}

\begin{lemma}\label{lem: sur in vspb}
	Consider a commutative diagram of vector spaces
	\[
		\begin{tikzcd}
			U & Y \\
			X & Z
			\arrow["F", from = 1-1, to = 1-2]
			\arrow["G", from = 1-1, to = 2-1]
			\arrow["g", from = 1-2, to = 2-2]
			\arrow["f", from = 2-1, to = 2-2]
		\end{tikzcd}
	\]
	Suppose that $F$ is surjective and the restriction $G\colon\ker F\to\ker f$ is surjective, then $G\times F\colon U\to X\fp{f}{g}Y$ is surjective.
\end{lemma}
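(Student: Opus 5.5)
The plan is a direct diagram chase using only linear algebra. Let $(x,y)\in X\fp{f}{g}Y$, so $f(x)=g(y)$. We must produce $u\in U$ with $G(u)=x$ and $F(u)=y$. The idea is to first match the $Y$-component using surjectivity of $F$. Then we correct the $X$-component by an element of $\ker F$, using surjectivity of $G|_{\ker F}\colon\ker F\to\ker f$.

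\emph{Step 1.} Since $F$ is surjective, choose $u_1\in U$ with $F(u_1)=y$. Commutativity of the square, $f\circ G=g\circ F$, gives
\[
f(G(u_1))=g(F(u_1))=g(y)=f(x),
\]
so $x-G(u_1)\in\ker f$.

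\emph{Step 2.} By the hypothesis on the restriction $G\colon\ker F\to\ker f$, there is $u_2\in\ker F$ with $G(u_2)=x-G(u_1)$.

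\emph{Step 3.} Set $u:=u_1+u_2$. By linearity, $F(u)=F(u_1)+F(u_2)=y$ and $G(u)=G(u_1)+G(u_2)=x$. Hence $(G\times F)(u)=(x,y)$, which proves surjectivity.

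There is no real obstacle here. The only point needing care is to note that the restriction $G|_{\ker F}$ indeed lands in $\ker f$, which is again commutativity: $f(G(k))=g(F(k))=0$ for $k\in\ker F$. This ensures that the hypothesis is meaningful and that Step 2 applies.
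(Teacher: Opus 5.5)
Your proof is correct and is the same argument as the paper's. You lift $y$ through $F$ to $u_1$, use commutativity to get $x-G(u_1)\in\ker f$, correct by some $u_2\in\ker F$, and take $u=u_1+u_2$; your remark that $G$ maps $\ker F$ into $\ker f$ is a sanity check the paper leaves implicit.
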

\begin{proof}
	Take some $(x,y)\in X\fp{f}{g}Y$ and notice that by surjectivity of $F$ we can find $u_{1}\in U$ such that $F(u_{1}) = y$. It follows that
	\[
		f(x - G(u_{1})) = f(x) - f(G(u_{1})) = f(x) - g(F(u_{1})) = f(x) - g(y) = 0.
	\]
	Therefore, $x - G(u_{1})\in \ker f$, and so there exists $u_{2}\in \ker F$ for which $G(u_{2}) = x - b(u_{1})$. We define $u = u_{1}+ u_{2}$ and remark that
	\[
		F(u_{1}+ u_{2}) = F(u_{1}) = y,\quad G(u_{1}+ u_{2}) = x\implies G\times F(u) = (x,y).
	\]
	Therefore, $G\times F$ is surjective.
\end{proof}
\begin{lemma}\label{lem: sur to pb}
	Consider a commutative diagram of sets
	\[
		\begin{tikzcd}
			U & Y \\
			X & Z
			\arrow["F", from = 1-1, to = 1-2]
			\arrow["G", from = 1-1, to = 2-1]
			\arrow["g", from = 1-2, to = 2-2]
			\arrow["f", from = 2-1, to = 2-2]
		\end{tikzcd}
	\]
	Then the following are equivalent:
	\begin{enumerate}
		\item \label{item: sur to pb; ind map} The map $G\times F\colon U\to X\fp{f}{g}Y$ is surjective.

		\item \label{item: sur to pb; fw} For all $z\in Z$ and all $x\in f^{-1}(z)$, the map $F\colon G^{-1}(x)\to g^{-1}(z)$ is surjective.
	\end{enumerate}
\end{lemma}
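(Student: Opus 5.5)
The proof is elementwise, and we unpack the fibre product first: $X\fp{f}{g}Y = \{(x,y)\in X\times Y \mid f(x) = g(y)\}$, and $G\times F$ sends $u\mapsto (G(u),F(u))$, which lands in the fibre product because $f\circ G = g\circ F$. Both conditions then say that a pair $(x,y)$ with $f(x)=g(y)$ is hit by a single $u$. We prove each direction by translating between the two phrasings.

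\eqref{item: sur to pb; ind map}$\implies$\eqref{item: sur to pb; fw}: Fix $z\in Z$, $x\in f^{-1}(z)$, and $y\in g^{-1}(z)$. Then $f(x) = z = g(y)$, so $(x,y)\in X\fp{f}{g}Y$. Surjectivity of $G\times F$ gives $u\in U$ with $G(u) = x$ and $F(u) = y$. Thus $u\in G^{-1}(x)$ and $F(u) = y$. Since every element of $G^{-1}(x)$ maps into $g^{-1}(f(x)) = g^{-1}(z)$ by commutativity, $F\colon G^{-1}(x)\to g^{-1}(z)$ is well defined, and we have just shown it is surjective.

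\eqref{item: sur to pb; fw}$\implies$\eqref{item: sur to pb; ind map}: Take $(x,y)\in X\fp{f}{g}Y$ and set $z := f(x) = g(y)$. Then $x\in f^{-1}(z)$ and $y\in g^{-1}(z)$. By \eqref{item: sur to pb; fw} there is $u\in G^{-1}(x)$ with $F(u) = y$, so $(G\times F)(u) = (x,y)$, which proves surjectivity.

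The only point that needs care is checking that the restricted map in \eqref{item: sur to pb; fw} is well defined, i.e.\ that $F(G^{-1}(x))\subset g^{-1}(z)$. This follows immediately from the commutativity of the square, so there is no real obstacle.
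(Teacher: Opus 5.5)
Your proposal is correct and follows essentially the same elementwise argument as the paper's proof in both directions. The only addition is your explicit check that $F(G^{-1}(x))\subset g^{-1}(z)$ follows from commutativity of the square, which the paper leaves implicit.
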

\begin{proof}
	\eqref{item: sur to pb; ind map}$\implies$\eqref{item: sur to pb; fw}: Suppose $G\times F\colon U\to X\fp{f}{g}Y$ is surjective and pick some arbitrary $z\in Z$ and $x\in f^{-1}(z)$. For $y\in g^{-1}(z)$ we have $(x,y)\in X\fp{f}{g}Y$ such that there exists some $u\in U$ with $G\times F(u) = (x,y)$, per assumption. This implies that $F(u) = y\in g^{-1}(z)$ and $G(u) = x$ such that $u\in G^{-1}(x)$. Therefore, the restriction $F\colon G^{-1}(x)\to g^{-1}(z)$ is surjective.

	\eqref{item: sur to pb; fw}$\implies$\eqref{item: sur to pb; ind map}: Suppose that for all choices $z\in Z$ and $x\in f^{-1}(z)$ the map $F\colon G^{-1}(x)\to g^{-1}(z)$ is surjective. Fix some $(x,y)\in X\fp{f}{g}Y$. This implies that $x\in f^{-1}(g(y))$ and as $F\colon G^{-1}(x)\to g^{-1}(g(y))$ is surjective, we find some $u\in G^{-1}(x)$ with $F(u) = y$. Therefore, $G\times F(u) = (x,y)$
\end{proof}

\begin{proof}[Proof of \ref{thm:weaking_fibration}.\eqref{item: weaking_fibration; full}]
	Condition \eqref{item: weaking_fibration; full} is part of the definition of a Lie groupoid fibration. To show that the condition is sufficient, we need to check that it implies that $\pi$ is a surjective submersion. Applying Lemma \ref{lem: sur_sum bs} to $f = \pi_0\colon M\to N$ and $g = \grs\colon\grH\to N$, it follows that the induced map $\widehat{\pi}_0\colon \grH\fp{\grs}{\pi_0}M\to \grH$ is a surjective submersion. The decomposition $\pi = \widehat{\pi}_0\circ\pi^{!}$ shows that $\pi$ is a surjective submersion.
\end{proof}

\begin{proof}[Proof of \ref{thm:weaking_fibration}.\eqref{item: weaking_fibration; shriek-sur}]
	As condition \eqref{item: weaking_fibration; shriek-sur} is part of the definition of fibration, we only need to show that it implies that $\pi^{!}$ is a submersion. Let $g\in \grG$ and denote $(h,x): = \pi^{!}(g) = (\pi(g),\grs(g))$. By assumption, $T_{g}\pi:T_{g}\grG\to T_{h}\grH$ is surjective. On the other hand, by Proposition~\ref{prp: cond VBsubgrpd} (or \cite{LiBland2014}*{Corollary~C.3}), we have that $\ker T\pi\tto \ker T\pi_0$ is a \VB-subgroupoid. In particular, $T_{g}\grs\colon \ker T_{g}\pi\to \ker T_{x}\pi_0$ is surjective. Therefore, we conclude by Lemma~\ref{lem: sur in vspb} that $T_{g}\pi^{!}$ is surjective. So $\pi^{!}$ is a submersion.
\end{proof}

\begin{proof}[Proof of \ref{thm:weaking_fibration}.\eqref{item: weaking_fibration; star-sur}]
	Lemma \ref{lem: sur to pb} shows that $\pi^{!}$ is surjective exactly when $\pi:\grs^{-1}(x)\to \grs^{-1}(\pi_0(x))$ is surjective for all $x\in M$. So items (b) and (c) are equivalent.
\end{proof}

Example \ref{ex: compl ker not imply compl total} shows that Lie groupoid fibrations indeed define a proper subclass of surjective submersions of Lie groupoids.

\begin{remark}
	\label{remark_fibrations} We remark that in the set-theoretical setting, the source-fibrewise surjectivity of the map is used to define Lie groupoid fibrations, and this property is also called a star-surjective groupoid morphism (see, e.g., \cite{Brown1970} and \cite{Chen2020}).
\end{remark}

With all the tools in place, it is easy to obtain that Lie groupoid fibrations are closed under taking pullbacks---much like surjective submersions or covering spaces.

\begin{corollary}\label{prp: bc of Lgrpd fib}
	Let $\pi\colon \grG \to \grH$ be a Lie groupoid fibration and $\phi\colon\grK\to \grH$ be a Lie groupoid morphism. Then $\widehat{\pi}:\grK\fp{\phi}{\pi}\grG\to\grK$ is also a Lie groupoid fibration.
\end{corollary}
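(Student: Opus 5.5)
Since Theorem~\ref{thm:weaking_fibration}\eqref{item: weaking_fibration; star-sur} gives an easily checkable characterisation of fibrations, I will verify its two conditions for $\widehat{\pi}$. These are that $\widehat{\pi}$ is a surjective submersion of Lie groupoids, and that it is surjective on source-fibres. First I need the fibre product $\grK\fp{\phi}{\pi}\grG$ to be a Lie groupoid. Since $\pi$ is a surjective submersion, the fibre product $\grK\fp{\phi}{\pi}\grG$ is a smooth manifold. The same holds on the base: $\pi_0$ is a surjective submersion, so $K_0\fp{\phi_0}{\pi_0}M$ is a manifold, where $K_0$ denotes the base of $\grK$. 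The structure maps are componentwise, so the fibre product is set-theoretically a subgroupoid of $\grK\times\grG$.

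The only delicate point is that the source map $(k,g)\mapsto(\grs(k),\grs(g))$ is a submersion. I plan to deduce this from the fibration property. Write
\[
\grK\fp{\phi}{\pi}\grG\cong (\grK\fp{\grs}{\phi_0\circ\mathrm{pr}_1}P)\fp{}{}\ldots,\qquad P:=K_0\fp{\phi_0}{\pi_0}M .
\]
More concretely, $(k,g)$ is determined by $k$, by $x=\grs(g)$, and by a point of the fibre of $\pi^{!}$ over $(\phi(k),x)$. Hence the source map factors as
\[
\grK\fp{\phi}{\pi}\grG \;\cong\; (\grK\fp{\grs}{\mathrm{pr}_1}P)\fp{\phi\times\mathrm{pr}_2}{\pi^{!}}\grG \;\longrightarrow\; \grK\fp{\grs}{\mathrm{pr}_1}P \;\longrightarrow\; P .
\]
By Lemma~\ref{lem: sur_sum bs}, the first arrow is a surjective submersion, being a base change of the surjective submersion $\pi^{!}$. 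The second arrow is also a base change of a surjective submersion, namely of $\grs\colon\grK\to K_0$. Hence the source map is a surjective submersion, and the fibre product is a Lie groupoid. The same reasoning applies to the target, or one can use inversion.

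Next, $\widehat{\pi}$ is a base change of the surjective submersion $\pi$, so it is a surjective submersion by Lemma~\ref{lem: sur_sum bs}. It is clearly a groupoid morphism, with base map $\widehat{\pi}_0\colon P\to K_0$.

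For source-fibre surjectivity, fix $(y,x)\in P$ and $k\in\grK$ with $\grs(k)=y$. Then $\grs(\phi(k))=\phi_0(y)=\pi_0(x)$. Since $\pi$ is a fibration, Theorem~\ref{thm:weaking_fibration}\eqref{item: weaking_fibration; star-sur} provides $g\in\grs^{-1}(x)$ with $\pi(g)=\phi(k)$. The pair $(k,g)$ then lies in the source-fibre over $(y,x)$ and is mapped to $k$. Applying Theorem~\ref{thm:weaking_fibration}\eqref{item: weaking_fibration; star-sur} to $\widehat{\pi}$ completes the argument.

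I expect the main obstacle to be the smoothness bookkeeping: checking that the fibre product is a Lie groupoid, and that the identification used in the factorisation of the source map is a diffeomorphism. The identification should be the obvious map $(k,g)\mapsto\big((k,(\grs(k),\grs(g))),g\big)$, whose inverse is a projection. This holds because the relevant fibre products are all transversal, since one factor is always a submersion.
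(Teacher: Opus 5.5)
Your proof is correct and follows the paper's argument. You get that $\widehat{\pi}$ is a surjective submersion from Lemma~\ref{lem: sur_sum bs}, and you check source-fibre surjectivity as the paper does, by lifting $(\phi(k),x)$ through $\pi^{!}$ and applying the criterion of Theorem~\ref{thm:weaking_fibration}. Your extra verification that $\grK\fp{\phi}{\pi}\grG$ is a Lie groupoid, by writing its source map as a composite of base changes of $\pi^{!}$ and of $\grs$, is something the paper leaves implicit, and your argument for it is sound.
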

\begin{proof}
	Lemma \ref{lem: sur_sum bs} shows that $\widehat{\pi}$ is a surjective submersion. By Theorem~\ref{thm:weaking_fibration} (c), we need only show that $\widehat{\pi}$ is surjective between source fibres. Denote the bases of the groupoid $\grG$, $\grH$ and $\grK$ by $M$, $N$ and $K$, respectively. Pick some $(x,y)\in K\fp{\phi_0}{\pi_0}M$ and $k\in \grs^{-1}(x)\subset \grK$. Note that
	\[
		\grs(\phi(k)) = \phi_0(\grs(k)) = \phi_0(x) = \pi_0(y).
	\]
	So, $(\phi(k),y)\in \grH\fp{\grs}{\pi_0}M$. As $\pi$ is a Lie groupoid fibration, there exists some $g\in \grG$ such that $\pi^{!}(g) = (\pi(g),\grs(g)) = (\phi(k),y)$. So $(k,g)\in \grK\fp{\phi}{\pi}\grG$, $\widehat{\pi}(k,g) = k$ and $\grs((k,g)) = (x,y)$. This shows that $\widehat{\pi}$ is surjective between source-fibres.
\end{proof}

Next, we show that the name \emph{uniform Lie groupoid fibration} is indeed justified.

\begin{corollary}
	\label{prp: uniform fib is fib} Any uniform Lie groupoid fibration is a Lie groupoid fibration.
\end{corollary}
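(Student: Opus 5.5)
The plan is to check condition \eqref{item: weaking_fibration; star-sur} (or equivalently \eqref{item: weaking_fibration; shriek-sur}) of Theorem~\ref{thm:weaking_fibration}. Let $\pi\colon\grG\to\grH$ be a uniform Lie groupoid fibration. By definition, $\pi$ is a surjective submersion. So it only remains to show that $\pi^{!}=\pi\times\grs\colon \grG\to \grH\fp{\grs}{\pi_0}M$ is surjective. Once this is done, Theorem~\ref{thm:weaking_fibration}\eqref{item: weaking_fibration; shriek-sur} upgrades $\pi^{!}$ to a surjective submersion for free.

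To show surjectivity, I factor $\pi^{!}$ through the map $\pi^{*}\colon\grG\to\pi_0^{*}\grH$ of \eqref{eq: unifibration}, $g\mapsto(\grt(g),\pi(g),\grs(g))$, which is surjective by assumption. Let $(h,y)\in\grH\fp{\grs}{\pi_0}M$, so that $\grs(h)=\pi_0(y)$. Since $\pi_0$ is surjective, I choose $x\in M$ with $\pi_0(x)=\grt(h)$. Then $(x,h,y)\in M\fp{\pi_0}{\grt}\grH\fp{\grs}{\pi_0}M=\pi_0^{*}\grH$. Surjectivity of $\pi^{*}$ gives $g\in\grG$ with $\grt(g)=x$, $\pi(g)=h$ and $\grs(g)=y$, hence $\pi^{!}(g)=(h,y)$. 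Equivalently, $\pi^{!}=(\mathrm{pr}_2\times\mathrm{pr}_3)\circ\pi^{*}$, and the map $\mathrm{pr}_2\times\mathrm{pr}_3\colon\pi_0^{*}\grH\to\grH\fp{\grs}{\pi_0}M$ is surjective because $\pi_0$ is. In fact it is a surjective submersion by Lemma~\ref{lem: sur_sum bs}, being a base change of $\pi_0$ along $\grt\circ\mathrm{pr}_1$.

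I expect no real obstacle here. The only point to watch is that the definition of uniform fibration as stated requires $\pi$ itself to be a surjective submersion. If one only assumed $\pi^{*}$ to be a surjective submersion, I would still get the conclusion directly: $\pi^{!}$ is then a composition of two surjective submersions (using Lemma~\ref{lem: sur_sum bs} for the second factor), so it is a surjective submersion. Then Theorem~\ref{thm:weaking_fibration}\eqref{item: weaking_fibration; full} applies, since $\pi_0$ is a surjective submersion as the base of a surjective submersion of groupoids.
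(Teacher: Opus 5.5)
Your argument is correct, but it takes a different route from the paper.

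The paper factors the morphism $\pi$ itself as $\pi=\widehat{\pi_0\times\pi_0}\circ\pi^{*}$. The first factor $\pi^{*}$ is an extension, hence a fibration. The second factor $\widehat{\pi_0\times\pi_0}\colon\pi_0^{*}\grH\to\grH$ is the base change of the pair-groupoid fibration $\pi_0\times\pi_0\colon M\times M\to N\times N$ along $(\grt,\grs)\colon\grH\to N\times N$, hence a fibration by Corollary~\ref{prp: bc of Lgrpd fib}. The paper then concludes by closure of fibrations under composition. That fact is used without proof; it follows from Theorem~\ref{thm:weaking_fibration}\eqref{item: weaking_fibration; star-sur}, since surjectivity on source-fibres composes.

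You instead factor the map $\pi^{!}$ rather than $\pi$, as $(\mathrm{pr}_2\times\mathrm{pr}_3)\circ\pi^{*}$. You verify its surjectivity by hand and let Theorem~\ref{thm:weaking_fibration}\eqref{item: weaking_fibration; shriek-sur} supply the submersion property. Your version is shorter and self-contained, needing neither pullback stability nor closure under composition. The paper's version is more structural: it exhibits a uniform fibration as an extension followed by a Morita fibration (the factorization also used in Proposition~\ref{prp: ex MC conn uniform Lgrpd fib}), and identifies the Morita factor as a pullback of a fibration.

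One small slip is in your final aside. If $\pi$ is no longer assumed to be a surjective submersion, you cannot obtain that $\pi_0$ is one ``as the base of a surjective submersion of groupoids'', since the base of $\pi^{*}$ is $\mathrm{id}_M$. You must keep it as a standing hypothesis. That hypothesis is needed anyway, both for $\pi_0^{*}\grH$ to be a manifold and for Lemma~\ref{lem: sur_sum bs} to apply.
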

\begin{proof}
	Suppose that $\pi\colon \grG\to \grH$ is a uniform Lie groupoid fibration. Note that the map $\pi^{*}\colon \grG\to \pi_0^{*}\grH$ is a Lie groupoid extension, and thus, in particular, a Lie groupoid fibration. Additionally, as $\pi_0\times\pi_0\colon M\times M\to N\times N$ is a Lie groupoid fibration, by Corollary \ref{prp: bc of Lgrpd fib}, so is the base-change $\widehat{\pi_0\times\pi_0}\colon \pi_0^{*}\grH \to\grH$. This implies that $\pi = \widehat{\pi_0\times\pi_0}\circ\pi^{*}$ is also a Lie groupoid fibration.
\end{proof}

Next, we show that uniform fibrations are indeed a proper subclass of fibrations.
\begin{example}
	\label{counter-ex: fib not uniform} Let $\grH\tto N$ be a Lie groupoid, $P$ a manifold of $\dim P>0$, and denote their product groupoid as $\grG = \grH\times P\tto N\times P$. The projection map $\mathrm{pr}_{1}\colon\grG\to\grH$ defines a Lie groupoid fibration; it is the pullback of the trivial fibration $P\to \{*\}$ along the trivial groupoid map $\grH\to\{*\}$. However, it is not uniform. Indeed
	\[
		\pi^{*}\colon \grG  = \grH\times{P}\to \pi_0^{*}(\grH) = \grH\times{P}\times P, \quad (h,p)\mapsto (h,p,p)
	\]
	is neither surjective nor a submersion.
\end{example}

\section{\VB-subgroupoids}
\label{app: proofs}

The following statement characterizes \VB-subgroupoids as subgroupoids that are vector bundles, and has been used several times in the paper. The result generalizes the first part of \cite{Jotz2012}*{Lemma~3.5}, and is proven similarly.

\begin{proposition}
	\label{prp: cond VBsubgrpd} Let $(\Gamma, E \sslash \grG,M)$ be a \VB-groupoid. A vector subbundle $S\subset \Gamma$ over $\grG$ which is a subgroupoid is a \VB-subgroupoid of $\Gamma$ with base $S|_{M}\cap E$.
\end{proposition}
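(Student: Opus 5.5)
Set $S_0 := S|_M\cap E$, where $M\subset\grG$ via the unit section and $E\subset\Gamma|_M$ via $\tilde{\gru}$. The plan is to show, in order, that $S_0$ is a vector subbundle of $E$, that $S\tto S_0$ is a Lie subgroupoid with submersive source and target, and that the \VB-compatibility conditions then come for free.

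First, the set-theoretic base and linearity. Since $S$ is a subgroupoid of $\Gamma\tto E$, its unit set is $\tilde{\gru}(\tilde{\grs}(S))\subset E$. Every element of $S|_M\cap E$ is a unit lying in $S$, so the base of $S$ is exactly $S_0$. For $e\in E_x$ we have $\tilde{\grs}(\tilde{\gru}(e))=e$, and $\tilde{\grs}$ is fibrewise linear. Hence $S_0=\tilde{\grs}(S|_M\cap \tilde{\gru}(E))$, and $(S_0)_x = S_x\cap E_x$ is a linear subspace.

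Second, constant rank and smoothness, which I expect to be the main obstacle. I would use the core decomposition $\Gamma|_M = E\oplus C$ with $C=\ker\tilde{\grs}|_M$. The key claim is that $\tilde{\grs}(S_x)=S_x\cap E_x$. For $v\in S_x$, the element $\tilde{\gru}(\tilde{\grs}(v)) = v\cdot(v^{-1}\cdot v)$ is in $S$, because it equals $v^{-1}v$ as a product in $S$, and this unit lies over $x$. Hence $(S_0)_x=\tilde{\grs}(S_x)$, and $S_x = (S_x\cap E_x)\oplus(S_x\cap C_x)$, since $v-\tilde{\gru}\tilde{\grs}(v)\in C_x\cap S_x$. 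To get constant rank along $M$, I would use the target as well: $\tilde{\grt}$ restricted to $S_x$ gives the same image on units. Then I would argue that the rank of $\tilde{\grs}|_{S_g}\colon S_g\to (S_0)_{\grs(g)}$ is constant in $g$. The reason is that right multiplication by $0_g$-type elements, i.e.\ by elements of $S_{g}$ lying over the zero of $E$, identifies $\ker\tilde{\grs}|_{S_g}$ with $(S\cap C)_{\grt(g)}$. Combined with lower semicontinuity of the rank of a smooth bundle map, this forces the rank of $S_0$ to be locally constant. It follows that $S_0$ is a smooth subbundle, being the image of a constant-rank bundle map $S|_M\to E$.

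Third, Lie subgroupoid structure. The restriction $\tilde{\grs}\colon S\to S_0$ is a vector bundle map over the submersion $\grs\colon\grG\to M$. It is fibrewise surjective, because $v\mapsto \tilde{\grs}(v)$ hits $(S_0)_{\grs(g)}$: translate by a unit-type element, using that $0_g\in S$ and $0_g\cdot\tilde{\gru}(e)$ is defined. A fibrewise surjective vector bundle map over a submersion is a submersion, so $\tilde{\grs}|_S$ and likewise $\tilde{\grt}|_S$ are submersions. Multiplication, inversion and the unit map are restrictions of smooth maps to embedded submanifolds, with $S^{(2)}$ a transversal fibre product. Hence $S\tto S_0$ is a Lie subgroupoid.

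Finally, \VB-compatibility. The projection $S\to\grG$ is a groupoid morphism over $S_0\to M$, since it is the restriction of $\tilde{q}$. Addition on $S\times_{\grG}S$ is the restriction of the addition of $\Gamma$, which is a groupoid morphism, and it lands in $S$ because $S$ is a subbundle. This makes $S\tto S_0$ a \VB-subgroupoid of $\Gamma$.
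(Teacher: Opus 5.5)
Your overall architecture matches the paper's proof: the core decomposition $\Gamma|_M=E\oplus C$, right translation by $0_g$, and then $\tilde{\grs}|_S$ as a submersion onto $S_0$. However, two steps fail as written.

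\emph{Constant rank of $S_0$.} The right-translation isomorphism $(S\cap C)_{\grt(g)}\cong S_g\cap\ker\tilde{\grs}_g$ only relates the rank of $\tilde{\grs}|_{S_g}$ to data at the single point $\grt(g)$. It does not show that this rank is constant in $g$. It also says nothing about how $\dim(S_0)_x$ varies as $x$ moves in $M$; together with inversion it gives at most constancy along orbits.

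Lower semicontinuity of the rank of $\tilde{\grs}|_{S|_M}\colon S|_M\to E$ (whose image is $S_0$ by your key claim) gives only one inequality: $\dim(S_0)_y\geq\dim(S_0)_x$ for $y$ near $x$. The missing inequality comes from $S_0=S|_M\cap E$ being the kernel of the bundle map $S|_M\to\Gamma|_M/E$, so its dimension can only drop locally. This is the paper's Step~1 via Lemma~\ref{lem:subbundle}, where the local sections $\tilde{\grs}\circ\sigma$ supply the other direction. Equivalently, $\tilde{\gru}\circ\tilde{\grs}$ and $\mathrm{id}-\tilde{\gru}\circ\tilde{\grs}$ are complementary idempotents of $S|_M$. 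Their images $S_0$ and $S|_M\cap C$ both have lower semicontinuous rank with constant sum, so both ranks are locally constant.

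\emph{Fibrewise surjectivity of $\tilde{\grs}\colon S_g\to(S_0)_{\grs(g)}$.} The product $0_g\cdot\tilde{\gru}(e)$ is not defined for $e\neq 0$, because $\tilde{\grs}(0_g)=0_{\grs(g)}$ while $\tilde{\grt}(\tilde{\gru}(e))=e$. There is no algebraic way to manufacture an element of $S_g$ with prescribed source; surjectivity has to come from a dimension count. This is where your right-translation identification actually belongs. Combined with $S|_M=S_0\oplus(S|_M\cap C)$, it gives
\[
\operatorname{rank}(\tilde{\grs}|_{S_g})=\operatorname{rank}S-\dim(S\cap C)_{\grt(g)}=\dim(S_0)_{\grt(g)}.
\]
Since $\tilde{\gru}\tilde{\grs}(v)=v^{-1}v\in S$, the image lies in $(S_0)_{\grs(g)}$, so $\dim(S_0)_{\grt(g)}\leq\dim(S_0)_{\grs(g)}$. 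Applying the same argument to $g^{-1}$ gives equality, hence surjectivity; this is the content of the paper's Steps~3--4. With these two repairs, the rest of your argument goes through. Also fix a small slip: $\tilde{\gru}(\tilde{\grs}(v))=v^{-1}\cdot v$, not $v\cdot(v^{-1}\cdot v)$.
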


\begin{remark}
	In the statement, we allow the vector bundle $S|_{M}\cap E$ to have possibly different ranks over different connected components of $M$. In fact, from the proof one can easily see that the rank of $S|_{M}\cap E$ will be the same over two connected components $M_{1}$ and $M_{2}$ that are related by an arrow $g\in \grG$ with $\grs(g)\in M_{1}$ and $\grt(g)\in M_{2}$.
\end{remark}

In the following auxiliary result, we also allow $V_{1}\cap V_{2}$ to have different ranks over different connected components.

\begin{lemma}\label{lem:subbundle}
	\label{lem:int_subb} Let $V$ be a vector bundle and $V_{1},V_{2}\subset V$ vector subbundles, all over the connected manifold $M$. Then $V_{1}\cap V_{2}$ is a vector subbundle if and only if each $v\in V_{1}\cap V_{2}$ can be extended to a smooth local section of $V$ with values in $V_{1}\cap V_{2}$.
\end{lemma}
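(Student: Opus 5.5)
One direction is immediate, and the other reduces to a rank computation. If $V_{1}\cap V_{2}$ is a vector subbundle, then by definition it has smooth local frames. Hence every $v\in (V_{1}\cap V_{2})_{x}$ extends to a smooth local section: write $v$ in such a frame around $x$ and use constant coefficients.

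For the converse, I assume the extension property and aim to show two things: first, that $x\mapsto \dim (V_{1}\cap V_{2})_{x}$ is locally constant, hence constant on the connected manifold $M$; second, that local frames exist.

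\textbf{Lower semicontinuity.} Fix $x\in M$ and a basis $v_{1},\dots,v_{k}$ of $(V_{1}\cap V_{2})_{x}$. By hypothesis, each $v_{i}$ extends to a smooth local section $\sigma_{i}$ of $V$ with values in $V_{1}\cap V_{2}$. Linear independence is an open condition, so on a neighbourhood $U$ of $x$ the sections $\sigma_{1},\dots,\sigma_{k}$ stay independent. Hence $\dim(V_{1}\cap V_{2})_{y}\geq k$ for $y\in U$.

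\textbf{Upper semicontinuity.} I would use the identity
\[
\dim(V_{1}\cap V_{2})_{y}=\operatorname{rk}V_{1}+\operatorname{rk}V_{2}-\dim\big((V_{1})_{y}+(V_{2})_{y}\big).
\]
The sum $(V_{1})_{y}+(V_{2})_{y}$ is spanned by the values of local frames of $V_{1}$ and $V_{2}$. Its dimension is the rank of a smooth family of vectors, which is lower semicontinuous in $y$. Therefore $\dim(V_{1}\cap V_{2})_{y}$ is upper semicontinuous.

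\textbf{Conclusion.} Combining the two bounds, the dimension is locally constant, hence constant on connected $M$. On the neighbourhood $U$, for $y$ near $x$, the independent sections $\sigma_{1}(y),\dots,\sigma_{k}(y)$ lie in $(V_{1}\cap V_{2})_{y}$, which has dimension exactly $k$. So they span it, and they form a smooth local frame of $V_{1}\cap V_{2}$. A subset of $V$ with constant fibre dimension and local smooth frames is a smooth vector subbundle, which finishes the proof.

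The main point is recognising that upper semicontinuity comes for free from the subbundle structure of $V_{1}$ and $V_{2}$. The extension hypothesis supplies the missing lower bound. Everything else is routine.
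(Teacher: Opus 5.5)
Your proof is correct and takes the same route as the paper. The rank $\dim(V_{1}\cap V_{2})_{y}$ is upper semicontinuous because $V_{1},V_{2}$ are subbundles, lower semicontinuous by the extension hypothesis, and hence locally constant. The only cosmetic difference is that the paper gets the upper bound by viewing $V_{1}\cap V_{2}$ as the kernel of the bundle map $V_{1}\to V/V_{2}$ and then uses that constant-rank kernels are subbundles; this is equivalent to your dimension formula for $(V_{1})_{y}+(V_{2})_{y}$ together with your explicit local frame.
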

\begin{proof}
	As $V_{1}\cap V_{2}$ is the kernel of the vector bundle map $V_{1}\to V/V_{2}$, it is a vector subbundle if and only if it has locally constant rank. As a kernel of a vector bundle map, its rank can only drop locally. The property that any $v\in V_{1}\cap V_{2}$ can be extended to a local section implies that the rank can only grow locally, and thus, it implies that $V_{1}\cap V_{2}$ is a vector subbundle. The converse is obvious.
\end{proof}

\begin{proof}[Proof of Proposition \ref{prp: cond VBsubgrpd}]
	We divide the proof into several steps.

	\emph{Step 1: $S|_{M}\cap E$ is a vector subbundle of $E$.}

	Here we identify $E$ with the unit section $E\equiv\gru(E)\subset \Gamma$, which is a vector subbundle over the unit section of $\grG$, which we also identify as $M\equiv\gru(M)\subset \grG$. So $S|_{M}$ and $E$ are vector subbundles of $\Gamma|_{M}$. We check the criterion in Lemma \ref{lem:int_subb} for each connected component of $M$. Let $e\in (S\cap E)_{x}$. Pick a local section of $S|_{M}$, $\sigma\colon U\subset M\to S|_{M}$, such that $\sigma(x) = e$. Then $\tilde{\grs}\circ\sigma\colon U\to E$ is a smooth section of $E$ with values in $S|_{M}\cap E$ such that $\tilde{\grs}\circ\sigma(x) = \tilde{\grs}(e) = e$.

	\emph{Step 2: $S|_{M}\cap C$ is a vector bundle, where $C: = \ker\tilde{s}|_{M}$ is the core of $\Gamma$.}

	Notice that $\Gamma|_{M} =  C\oplus E$, with projection to $E$ given by $\tilde{\grs}\colon\Gamma|_{M}\to E$ and inclusion of $E$ given by $\tilde{\gru}\colon E\to \Gamma|_{M}$. Since $S$ is a groupoid, it is invariant under these two maps, which shows that $S|_{M} =  (S|_{M}\cap C)\oplus (S|_{M}\cap E)$. Using Step 1, it follows that $S|_{M}\cap C$ has locally constant rank, which implies the claim.

	\emph{Step 3: $S\cap\ker\tilde{\grs}$ is a vector subbundle.}

	We first note that $\ker\tilde{\grs}$ is canonically isomorphic to $\grt^{*}C$ via the morphism:
	\[
		j\colon \grt^{*}C\to \ker\tilde{\grs},\qquad (c,g)\mapsto c\cdot0_{g},\quad c\in C_{\grt(g)};
	\]
	in other words, right translation by $0_{g}\in \Gamma_{g}$, denoted as $r_{0_g}\colon C_{\grt(g)}\to \ker\tilde{\grs}_{g}$, is a linear isomorphism. Now as $S$ is a subgroupoid of $\Gamma$, this map restricts to an isomorphism $r_{0_g}\colon S_{\grt(g)}\cap C_{\grt(g)}\to S_{g}\cap \ker\tilde{\grs}_{g}$. So $S\cap\ker\tilde{\grs}$ has locally constant rank. By Lemma~\ref{lem:subbundle}, it follows that it is a vector subbundle.

	\emph{Step 4: $\tilde{\grs}\colon S\to S|_{M}\cap E$ is a surjective submersion.}

	First, the previous steps imply that the map is fibrewise surjective:
	\[
		\mathrm{rank}(S|_{M}\cap E) = \mathrm{rank}(S|_{M})-\mathrm{rank}(S|_{M}\cap C) = \mathrm{rank}(S)-\mathrm{rank}(S\cap \ker\tilde{\grs}) = \mathrm{rank}(\tilde{\grs}|_{S}).
	\]
	Since the map covers the surjective submersion $\grs$, it is a surjective submersion as well.

	\emph{Step 5: Conclusion.}

	We can conclude that $S\tto S|_{M}\cap E$ is a Lie groupoid. Being contained in $\Gamma\tto E$, it is compatible with the vector bundle structure, so as to be a \VB-groupoid.
\end{proof}

\bibliography{Bibliography}

@article{Crainic2003,
  author  = {Crainic, Marius},
  title   = {Differentiable and algebroid cohomology, Van Est isomorphisms, and characteristic classes},
  journal = {Commentarii Mathematici Helvetici},
  volume  = {78},
  number  = {4},
  pages   = {681--721},
  year    = {2003},
  doi     = {10.1007/s00014-003-0775-2}
}

@article{MackXu98,
    AUTHOR = {Mackenzie, Kirill C. H. and Xu, Ping},
     TITLE = {Classical lifting processes and multiplicative vector fields},
   JOURNAL = {Quart. J. Math. Oxford Ser. (2)},
  FJOURNAL = {The Quarterly Journal of Mathematics. Oxford. Second Series},
    VOLUME = {49},
      YEAR = {1998},
    NUMBER = {193},
     PAGES = {59--85},
      ISSN = {0033-5606,1464-3847},
   MRCLASS = {58H05 (58F05)},
  MRNUMBER = {1617335},
MRREVIEWER = {Charles-Michel\ Marle},
       DOI = {10.1093/qjmath/49.193.59},
       URL = {https://doi.org/10.1093/qjmath/49.193.59},
}

@article{Brown1970,
  author   = {Brown, Ronald},
  doi      = {10.1016/0021-8693(70)90089-X},
  fjournal = {Journal of Algebra},
  issn     = {0021-8693},
  journal  = {J. Algebra},
  pages    = {103--132},
  title    = {Fibrations of groupoids},
  volume   = {15},
  year     = {1970},
}

@article{BuCadH2016,
  author     = {Bursztyn, Henrique and Cabrera, Alejandro and del Hoyo,
                Matias},
  doi        = {10.1016/j.aim.2015.11.044},
  fjournal   = {Advances in Mathematics},
  issn       = {0001-8708,1090-2082},
  journal    = {Adv. Math.},
  mrclass    = {53D17},
  mrnumber   = {3451921},
  mrreviewer = {Iakovos\ Androulidakis},
  pages      = {163--207},
  title      = {Vector bundles over {L}ie groupoids and algebroids},
  url        = {https://doi.org/10.1016/j.aim.2015.11.044},
  volume     = {290},
  year       = {2016},
}

@article{Chen2020,
  author   = {Chen, Bohui and Du, Cheng-Yong and Wang, Yu},
  doi      = {10.1016/j.geomphys.2020.103644},
  fjournal = {Journal of Geometry and Physics},
  issn     = {0393-0440},
  journal  = {J. Geom. Phys.},
  pages    = {27},
  title    = {On fibrations of {Lie} groupoids},
  volume   = {152},
  year     = {2020},
}

@article{Crainic_Salazar,
  author     = {Crainic, Marius and Salazar, Maria Amelia and Struchiner,
                Ivan},
  doi        = {10.1007/s00209-014-1398-z},
  fjournal   = {Mathematische Zeitschrift},
  issn       = {0025-5874,1432-1823},
  journal    = {Math. Z.},
  mrclass    = {58H05 (53C05 58H10)},
  mrnumber   = {3318255},
  mrreviewer = {I.\ Kol\'{a}\v{r}},
  number     = {3-4},
  pages      = {939--979},
  title      = {Multiplicative forms and {S}pencer operators},
  url        = {https://doi.org/10.1007/s00209-014-1398-z},
  volume     = {279},
  year       = {2015},
}

@article{Crainic2017,
  author   = {Crainic, Marius and Mestre, Jo{\~a}o Nuno},
  doi      = {10.1007/s11005-017-1011-6},
  fjournal = {Letters in Mathematical Physics},
  issn     = {0377-9017},
  journal  = {Lett. Math. Phys.},
  number   = {3},
  pages    = {805--859},
  title    = {Orbispaces as differentiable stratified spaces},
  volume   = {108},
  year     = {2018},
}

@book{Crainic2021,
  author    = {Crainic, Marius and Fernandes, Rui Loja and Mărcuț, Ioan},
  doi       = {10.1090/gsm/217},
  fseries   = {Graduate Studies in Mathematics},
  isbn      = {978-1-4704-6667-1; 978-1-4704-6666-4},
  issn      = {1065-7339},
  publisher = {Providence, RI: American Mathematical Society (AMS)},
  series    = {Grad. Stud. Math.},
  title     = {Lectures on {Poisson} geometry},
  volume    = {217},
  year      = {2021},
}

@article{CrMeStr20,
  author     = {Crainic, Marius and Mestre, Jo\~{a}o Nuno and Struchiner,
                Ivan},
  doi        = {10.1093/imrn/rny221},
  fjournal   = {International Mathematics Research Notices. IMRN},
  issn       = {1073-7928,1687-0247},
  journal    = {Int. Math. Res. Not. IMRN},
  mrclass    = {58H15 (58H05)},
  mrnumber   = {4176835},
  mrreviewer = {Tom\'{a}\v{s}\ Gedeon},
  number     = {21},
  pages      = {7662--7746},
  title      = {Deformations of {L}ie groupoids},
  url        = {https://doi.org/10.1093/imrn/rny221},
  year       = {2020},
}

@article{delHoyo2016,
  author   = {del Hoyo, Matias},
  doi      = {10.1016/j.indag.2016.06.009},
  fjournal = {Indagationes Mathematicae. New Series},
  issn     = {0019-3577},
  journal  = {Indag. Math., New Ser.},
  number   = {4},
  pages    = {985--990},
  title    = {Complete connections on fiber bundles},
  volume   = {27},
  year     = {2016},
}

@article{delHoyo2019,
  author   = {del Hoyo, Matias and Fernandes, Rui Loja},
  doi      = {10.1007/s00209-018-2154-6},
  fjournal = {Mathematische Zeitschrift},
  issn     = {0025-5874},
  journal  = {Math. Z.},
  number   = {1-2},
  pages    = {103--132},
  title    = {Riemannian metrics on differentiable stacks},
  volume   = {292},
  year     = {2019},
}

@article{Fernandes2023,
  author   = {Fernandes, Rui Loja and Mărcuț, Ioan},
  doi      = {10.1016/j.aim.2023.109124},
  fjournal = {Advances in Mathematics},
  issn     = {0001-8708},
  journal  = {Adv. Math.},
  pages    = {84},
  title    = {Multiplicative {Ehresmann} connections},
  volume   = {427},
  year     = {2023},
}

@article {FerMa26,
    AUTHOR = {Fernandes, Rui Loja and Mărcuț, Ioan},
     TITLE = {Poisson geometry around {P}oisson submanifolds},
   JOURNAL = {J. Eur. Math. Soc. (JEMS)},
  FJOURNAL = {Journal of the European Mathematical Society (JEMS)},
    VOLUME = {28},
      YEAR = {2026},
    NUMBER = {3},
     PAGES = {1213--1311},
      ISSN = {1435-9855,1435-9863},
   MRCLASS = {53D17 (58H05)},
  MRNUMBER = {5032976},
       DOI = {10.4171/jems/1494},
       URL = {https://doi.org/10.4171/jems/1494},
}

@misc{Grad2025,
  arxiv        = {arXiv:2503.08873},
  author       = {Grad, {\v{Z}}an},
  howpublished = {Preprint, {arXiv}:2503.08873 [math.{DG}] (2025)},
  title        = {Covariant derivatives in the representation-valued {Bott}-{Shulman}-{Stasheff} and {Weil} complex},
  url          = {https://arxiv.org/abs/2503.08873},
  year         = {2025},
}

@phdthesis{GradPhD,
  author = {Grad, {\v{Z}}an},
  school = {Instituto Superior T\'{e}cnico Lisbon},
  title  = {Fundamentals of {L}ie categories and {Y}ang--{M}ills theory for multiplicative {E}hresmann connections},
  year   = {2025},
}

@article{Habib2020,
  author   = {Amiri, Habib and Gl{\"o}ckner, Helge and Schmeding, Alexander},
  doi      = {10.5817/AM2020-5-307},
  fjournal = {Archivum Mathematicum},
  issn     = {0044-8753},
  journal  = {Arch. Math. (Brno)},
  number   = {5},
  pages    = {307--356},
  title    = {Lie groupoids of mappings taking values in a {Lie} groupoid.},
  volume   = {56},
  year     = {2020},
}

@article{Hawkins2008,
  author     = {Hawkins, Eli},
  doi        = {10.4310/jsg.2008.v6.n1.a4},
  fjournal   = {The Journal of Symplectic Geometry},
  issn       = {1527-5256,1540-2347},
  journal    = {J. Symplectic Geom.},
  mrclass    = {46L65 (22A22 53D17 53D50)},
  mrnumber   = {2417440},
  mrreviewer = {Xiang\ Tang},
  number     = {1},
  pages      = {61--125},
  title      = {A groupoid approach to quantization},
  url        = {https://doi.org/10.4310/jsg.2008.v6.n1.a4},
  volume     = {6},
  year       = {2008},
}

@article{JoOr2014,
  author     = {Jotz Lean, M. and Ortiz, C.},
  doi        = {10.1016/j.indag.2014.07.009},
  fjournal   = {Koninklijke Nederlandse Akademie van Wetenschappen.
                Indagationes Mathematicae. New Series},
  issn       = {0019-3577,1872-6100},
  journal    = {Indag. Math. (N.S.)},
  mrclass    = {58H05 (53C12)},
  mrnumber   = {3264786},
  mrreviewer = {David\ Iglesias Ponte},
  number     = {5},
  pages      = {1019--1053},
  title      = {Foliated groupoids and infinitesimal ideal systems},
  url        = {https://doi.org/10.1016/j.indag.2014.07.009},
  volume     = {25},
  year       = {2014},
}

@article{Jotz2012,
  author   = {Jotz, M.},
  doi      = {10.3934/jgm.2012.4.313},
  fjournal = {Journal of Geometric Mechanics},
  issn     = {1941-4889},
  journal  = {J. Geom. Mech.},
  number   = {3},
  pages    = {313--332},
  title    = {The leaf space of a multiplicative foliation},
  volume   = {4},
  year     = {2012},
}

@phdthesis{KTPhD,
  author = {Kraasch-Tarnowsky, Annika},
  school = {Rheinischen Friedrich-Wilhelms-Universit¨at Bonn},
  title  = {The differentiable stack cohomology
            associated to a regular and proper Lie
            groupoid},
  year   = {2025},
}

@article{Laurent-Gengoux2009,
  author   = {Laurent-Gengoux, Camille and Sti{\'e}non, Mathieu and Xu, Ping},
  doi      = {10.1016/j.aim.2008.10.018},
  fjournal = {Advances in Mathematics},
  issn     = {0001-8708},
  journal  = {Adv. Math.},
  number   = {5},
  pages    = {1357--1427},
  title    = {Non-abelian differentiable gerbes},
  volume   = {220},
  year     = {2009},
}

@article{LiBland2014,
  author   = {Li-Bland, David and Meinrenken, Eckhard},
  doi      = {10.4310/AJM.2014.v18.n5.a2},
  fjournal = {The Asian Journal of Mathematics},
  issn     = {1093-6106},
  journal  = {Asian J. Math.},
  number   = {5},
  pages    = {779--816},
  title    = {Dirac {Lie} groups},
  volume   = {18},
  year     = {2014},
}

@article{Mackenzie1992,
  author     = {Mackenzie, Kirill C. H.},
  doi        = {10.1016/0001-8708(92)90036-K},
  fjournal   = {Advances in Mathematics},
  issn       = {0001-8708,1090-2082},
  journal    = {Adv. Math.},
  mrclass    = {58H05 (17B99 18D05 53C05 58F05)},
  mrnumber   = {1174393},
  mrreviewer = {Johannes\ Huebschmann},
  number     = {2},
  pages      = {180--239},
  title      = {Double {L}ie algebroids and second-order geometry. {I}},
  url        = {https://doi.org/10.1016/0001-8708(92)90036-K},
  volume     = {94},
  year       = {1992},
}

@book{Mackenzie2005,
  author    = {Mackenzie, Kirill C. H.},
  fseries   = {London Mathematical Society Lecture Note Series},
  isbn      = {0-521-49928-3},
  issn      = {0076-0552},
  publisher = {Cambridge: Cambridge University Press},
  series    = {Lond. Math. Soc. Lect. Note Ser.},
  title     = {The general theory of {Lie} groupoids and {Lie} algebroids},
  volume    = {213},
  year      = {2005},
}

@book{Munkres2000,
  author    = {Munkres, James R.},
  edition   = {2nd ed.},
  isbn      = {0-13-181629-2},
  publisher = {Upper Saddle River, NJ: Prentice Hall},
  title     = {Topology.},
  year      = {2000},
}

@article{Pugliese2023,
  author   = {Pugliese, Fabrizio and Sparano, Giovanni and Vitagliano, Luca},
  doi      = {10.1142/S0219199721500929},
  fjournal = {Communications in Contemporary Mathematics},
  issn     = {0219-1997},
  journal  = {Commun. Contemp. Math.},
  number   = {1},
  pages    = {36},
  title    = {Multiplicative connections and their {Lie} theory},
  volume   = {25},
  year     = {2023},
}

@book{Tu2017,
  author    = {Tu, Loring W.},
  doi       = {10.1007/978-3-319-55084-8},
  fseries   = {Graduate Texts in Mathematics},
  isbn      = {978-3-319-55082-4; 978-3-319-85562-2; 978-3-319-55084-8},
  issn      = {0072-5285},
  publisher = {Cham: Springer},
  series    = {Grad. Texts Math.},
  title     = {Differential geometry. {Connections}, curvature, and characteristic classes},
  volume    = {275},
  year      = {2017},
}

@article{Xiang2006,
  author     = {Tang, Xiang},
  doi        = {10.1007/s00039-006-0567-6},
  fjournal   = {Geometric and Functional Analysis},
  issn       = {1016-443X,1420-8970},
  journal    = {Geom. Funct. Anal.},
  mrclass    = {53D55 (17B63 53D17 58H05)},
  mrnumber   = {2238946},
  mrreviewer = {Stefan\ Waldmann},
  number     = {3},
  pages      = {731--766},
  title      = {Deformation quantization of pseudo-symplectic ({P}oisson)
                groupoids},
  url        = {https://doi.org/10.1007/s00039-006-0567-6},
  volume     = {16},
  year       = {2006},
}
\end{document}